\begin{document}
\theoremstyle{plain}
\newtheorem{thm}{Theorem}[section]
\newtheorem{theorem}[thm]{Theorem}
\newtheorem{lemma}[thm]{Lemma}
\newtheorem{corollary}[thm]{Corollary}
\newtheorem{proposition}[thm]{Proposition}
\newtheorem{conjecture}[thm]{Conjecture}
\newtheorem{obs}[thm]{}
\theoremstyle{definition}
\newtheorem{construction}[thm]{Construction}
\newtheorem{notations}[thm]{Notations}
\newtheorem{question}[thm]{Question}
\newtheorem{problem}[thm]{Problem}
\newtheorem{remark}[thm]{Remark}
\newtheorem{remarks}[thm]{Remarks}
\newtheorem{definition}[thm]{Definition}
\newtheorem{claim}[thm]{Claim}
\newtheorem{assumption}[thm]{Assumption}
\newtheorem{assumptions}[thm]{Assumptions}
\newtheorem{properties}[thm]{Properties}
\newtheorem{example}[thm]{Example}
\newtheorem{comments}[thm]{Comments}
\newtheorem{blank}[thm]{}
\newtheorem{defn-thm}[thm]{Definition-Theorem}

\newcommand{\sM}{{\mathcal M}}
\newcommand{\cK}{{\mathcal K}}
\def\res{\operatorname{Res}}
\renewcommand{\arraystretch}{1.3}


\title[$\Psi$DO and Witten's $r$-spin numbers]{Formal pseudodifferential operators and \\Witten's $r$-spin numbers}

\author{Kefeng Liu}
        \address{Center of Mathematical Sciences, Zhejiang University, Hangzhou, Zhejiang 310027, China;
                Department of Mathematics,University of California at Los Angeles,
                Los Angeles}
        \email{liu@math.ucla.edu, liu@cms.zju.edu.cn}

        \author{Ravi Vakil}
        \address{Department of Mathematics, Stanford University}
        \email{vakil@math.stanford.edu}

        \author{Hao Xu}
        \address{Department of Mathematics, Harvard University}
        \email{haoxu@math.harvard.edu}

        \begin{abstract}We derive an effective recursion for Witten's
          $r$-spin intersection numbers, using Witten's
          conjecture relating $r$-spin numbers to the Gel'fand-Dikii
          hierarchy (Theorem~\ref{spin}).
 Consequences include
          closed-form descriptions of the intersection numbers
(for example, in
         terms of gamma functions:  Propositions~\ref{onept2}
          and~\ref{onept}, Corollary~\ref{c}).
We use these
         closed-form descriptions
 to prove Harer-Zagier's
formula for the Euler characteristic of $\mathcal M_{g,1}$.
Finally in
\S \ref{sectionzero}, we extend Witten's series expansion formula for the Landau-Ginzburg
potential to study $r$-spin numbers in the small phase space in
genus zero.
Our key tool is the calculus of formal pseudodifferential
          operators, and is partially motivated by work of Br\'ezin
          and Hikami.
        \end{abstract}


    \maketitle
\tableofcontents

\section{Introduction}

\label{s:introduction}Motivated by two dimensional gravity, E. Witten proposed two
influential conjectures relating integrable hierarchies to the
intersection theory of moduli spaces of curves, see \cite{Wi, Wi2}.

We begin by recalling Witten's definition of $r$-spin intersection numbers.
Witten's original papers \cite{Wi2, Wi3} remain the best
introduction to the mathematical and physical background of this
subject. Other excellent expositions can be found in \cite{JKV, Sh}. For
an introduction to relevant facts about the moduli spaces of curves, see \cite{Va}.

Let $\Sigma$ be a Riemann surface of genus $g$ with marked points
$x_1$, $x_2$, \dots, $x_s$. Fix an integer $r\geq 2$. Label each marked
point $x_i$ by an integer $m_i$, $0\leq m_i\leq r-1$. Consider the
line bundle $\mathcal S=\cK \otimes \mathcal
O(- \sum_{i=1}^s m_i x_i)$ over $\Sigma$, where $\cK$ as usual denotes the canonical line
bundle. If $2g-2-\sum_{i=1}^s m_i$ is divisible by $r$, then there
are $r^{2g}$ isomorphism classes of line bundles $\mathcal T$ such
that $\mathcal T^{\otimes r}\cong \mathcal S$. The choice of an
isomorphism class of $\mathcal T$ determines a finite \'etale cover $\mathcal
M^{1/r}_{g,s}$ of $\mathcal M_{g,s}$, the moduli space of $r$-spin
curves, which comes with a universal curve $\pi: \mathcal{C}^{1/r}_{g,n} \rightarrow
\mathcal{M}^{1/r}_{g,s}$, on which lives a universal bundle, which we also
sloppily denote $T$. A compactification of
$\mathcal M^{1/r}_{g,s}$, denoted by $\overline{\mathcal
M}^{1/r}_{g,s}$, was constructed in \cite{AJ, Ja}.

Let $\mathcal V$ be a vector bundle over $\overline{\mathcal
M}^{1/r}_{g,s}$ whose fiber is the dual space to
$H^1(\Sigma,\mathcal T)$.  More precisely, $\mathcal V := R^1 \pi_*
\mathcal T$. The top Chern class $c_{top}(\mathcal{V})$
of this bundle has degree $(g-1)(r-2)/r+\sum_{i=1}^sm_i/r$. The
algebro-geometric constructions of $c_{top}(\mathcal{V})$ can be
found in \cite{Chi2, PV}.

We associate with each marked point $x_i$ an integer $n_i\geq 0$.
Witten's $r$-spin intersection numbers are defined by
\begin{equation}\label{eqrspin}
\langle\tau_{n_1,m_1}\dots\tau_{n_s,m_s}\rangle_g=
\frac{1}{r^g}\int_{\overline{\mathcal
M}^{1/r}_{g,s}}\prod_{i=1}^s\psi(x_i)^{n_i}\cdot
 c_{top}(\mathcal{V}),
\end{equation}
which is non-zero only if
\begin{equation} \label{eqdim}
(r+1)(2g-2)+rs=r\sum_{j=1}^s n_j+\sum_{j=1}^s m_j.
\end{equation}

Fix an integer $r\geq2$. Consider the pseudodifferential operator
\begin{equation} \label{eqpdo7}
Q=D^r+\sum_{i=0}^{r-2}\gamma_i(x)D^i,  \quad \text{ where }
D=\frac{\sqrt{-1}}{\sqrt{r}}\frac{\partial}{\partial x}.
\end{equation}

It is easy to see that there is a unique pseudodifferential operator
$L$ such that $L^r=Q$ (see Lemma \ref{pdo3}),which we denote
$$
Q^{1/r}=D+\sum_{i>0}w_{-i}D^{-i},
$$ where the coefficients $\{w_{-i}\}$ are universal differential
polynomials in the $\{\gamma_i\}$.

The Gel'fand--Dikii equations read
$$
i\frac{\partial Q}{\partial
t_{n,m}} = [Q^{n+(m+1)/r}_+,Q]\cdot \frac{c_{n,m}}{\sqrt{r}},
$$
where the constants $c_{n,m}$ are given by
$$
c_{n,m}=\frac{(-1)^{n}r^{n+1}} {(m+1)(r+m+1)\cdots(nr+m+1)}.
$$

Consider the formal series $F$ in variables $t_{n,m}$, $n\geq 0$ and
$0\leq m\leq r-1$,
$$
F(t_{0,0},t_{0,1},\dots)=\sum_{d_{n,m}}\langle\prod_{n,m}\tau_{n,m}^{d_{n,m}}\rangle
\prod_{n,m}\frac{t_{n,m}^{d_{n,m}}}{d_{n,m}!}.
$$

Witten conjectured in \cite{Wi2} that the above $F$ is the
string solution of the $r$-Gel'fand--Dikii hierarchy, namely  that $F$
satisfies
\begin{equation} \label{eqwitten}
\frac{\partial ^2 F}{\partial t_{0,0} \partial t_{n,m}}
=-c_{n,m}\res (Q^{n+\frac{m+1}{r}}),
\end{equation}
where $Q$ satisfies the Gel'fand--Dikii equations and $t_{0,0}$ is
identified with $x$. In addition, $F$ satisfies the string equation
\begin{equation}\label{eqstr}
\frac{\partial F}{\partial t_{0,0}}=\frac{1}{2}
\sum_{i,j=0}^{r-2}\delta_{i+j,r-2}t_{0,i}t_{0,j}+
\sum_{n=0}^{\infty} \sum_{m=0}^{r-2} t_{n+1,m} \frac{\partial
F}{\partial t_{n,m}}.
\end{equation}
This should be regarded as a boundary condition for $F$.

When $r=2$, the above assertion is the celebrated Witten-Kontsevich
theorem \cite{Ko}, to which there are a number of enlightening proofs. Witten's conjecture for any $r\geq 2$ has been
proved by Faber, Shadrin and Zvonkine \cite{FSZ}, building on
work of Givental and Lee \cite{Lee}. In fact, Witten's $r$-spin theory
corresponds to $A_{r-1}$ singularity in the Landau-Ginzburg theory.
Fan, Javis and Ruan \cite{FJR} have developed a Gromov-Witten type
quantum theory for all non-degenerate quasi-homogeneous singularities
and proved the ADE-integrable hierarchy conjecture of Witten. Chang
and Li \cite{CL} have initiated a program to give an algebro-geometric
construction of Landau-Ginzburg theory.

Witten's  constraints \eqref{eqwitten} (the $r$-Gel'fand--Dikii
equation) and  \eqref{eqstr} (the string equation) uniquely
determine $F$. There is much interest in understanding the
structure of $r$-spin intersection numbers both in mathematics and
physics (cf. \cite{BH, BH2, Chi, KL, Na, SZ}).

The paper is organized as follows.   In \S \ref{sectionwitten},
we recall useful identities of $r$-spin numbers. In \S
\ref{sectionpdo}, we prove a structure theorem of formal
pseudodifferential operators and use it to derive/define ``universal differential
polynomials'' $W_r(z)$, which will play a central role in the rest of
the paper.    In \S \ref{sectionalgorithm}, we present
a recursive algorithm for computing Witten's $r$-spin numbers for
all genera.
 Consequences include
         closed-form descriptions of
the one-point $r$-spin numbers, which we use in
 \S \ref{sectioneuler} to prove Harer-Zagier's
formula for the Euler characteristic of $\mathcal M_{g,1}$. In
\S \ref{sectionzero}, we study $r$-spin numbers on small phase
spaces in genus zero.


\noindent{\bf Acknowledgements.}  We thank J. Li, W. Luo, M. Mulase,
Y.B. Ruan, and  J. Zhou  for helpful conversations. The third author
thanks Professor D. Zeilberger for answering a question on
computer proof of combinatorial identities.

\vskip 30pt
\section{Review:  Witten's $r$-spin intersection
numbers}\label{sectionwitten}
In this section, we collect fundamental properties of $r$-spin
intersection numbers that we will use in this paper. The proof of
the these identities can be found in \cite{Wi2,JKV}. The $r$-spin
numbers satisfy the following:

\begin{enumerate}
\item[i)] If $m_i=r-1$, for some $1\leq i \leq s$, then
$$
\langle \tau_{n_1,m_1} \cdots \tau_{n_s, m_s} \rangle_g = 0.
$$
\item[ii)] {\em (string equation)}
\begin{equation} \label{eqstring}
\langle \tau_{0,0}\prod_{i=1}^s\tau_{n_i, m_i}\rangle_g =
\sum_{j=1}^s \langle \tau_{n_{j}-1,m_j} \prod^s_{\substack{i=1\\i\ne
j}}\tau_{n_i, m_i} \rangle_g.
\end{equation}
This, along  with $\langle \tau_{0,0}\tau_{0, i} \tau_{0, j}
\rangle_0=\delta_{i+j,r-2}$, is equivalent to \eqref{eqstr}.
\item[iii)] {\em (dilaton equation)}
\begin{equation} \label{eqdilaton}
\langle \tau_{1,0} \prod_{i=1}^s \tau_{n_i, m_i}\rangle_g =
(2g-2+s)\langle \prod_{i=1}^s \tau_{n_i, m_s}\rangle_g.
\end{equation}
\item[iv)] {\em (genus zero topological recursion relation)}
\begin{multline} \label{eqzero}
\langle \tau_{n_1+1, m_1} \tau_{n_2, m_2} \tau_{n_3, m_3}
\prod_{i=4}^s \tau_{n_i, m_i}\rangle_0 = \sum_{\{4 \cdots
s\}=I\coprod J} \sum_{m', m''=0}^{r-2}\langle \tau_{n_1, m_1}
\prod_{i\in I} \tau_{n_i, m_i} \tau_{0, m'} \rangle_0 \\
\cdot\eta^{m', m''}\langle \tau_{0, m''} \tau_{n_2, m_2} \tau_{n_3,
m_3} \prod_{i\in J} \tau_{n_i, m_i}\rangle_0,
\end{multline}
 where $\eta^{m', m''}=\delta_{m'+m'', r-2}$.
\item[v)] {\em (WDVV equation in genus zero)}
\begin{multline}\label{eqwdvv}
\sum_{m',m''=0}^{r-2} \prod_{\{5\cdots s\}=I \coprod J}
\langle\tau_{n_1, m_1}\tau_{n_2, m_2} \prod_{i\in I}\tau_{n_i,
m_i}\tau_{0, m'}\rangle_0 \eta^{m', m''}\langle\tau_{0,
m''}\tau_{n_3, m_3} \tau_{n_4, m_4}\prod_{i\in J}\tau_{n_i,
m_i}\rangle_0 \\ = \sum_{m',m''=0}^{r-2} \prod_{\{5\cdots s\}=I
\coprod J} \langle\tau_{n_1, m_1}\tau_{n_3, m_3} \prod_{i\in
I}\tau_{n_i, m_i}\tau_{0, m'}\rangle_0 \eta^{m', m''}\langle\tau_{0,
m''}\tau_{n_2, m_2} \tau_{n_4, m_4}\prod_{i\in J}\tau_{n_i,
m_i}\rangle_0
\end{multline}
\end{enumerate}

Witten  gives a detailed study of $r$-spin numbers in
genus zero in \cite{Wi2}. As he points out, the genus zero topological
recursion relation can be used to eliminate all descendent indices
(those $\tau_{i,j}$ with $i>0$),
so we only need to consider primary intersection numbers $\langle
\tau_{0,m_1},\cdots \tau_{0, m_s} \rangle$ on the small phase space.
Witten proves that the WDVV equation uniquely determines
primary $r$-spin intersection numbers in genus zero. For the reader's
convenience, we record Witten's work below in a more explicit form.
We will denote $\langle \tau_{0,a_1},\cdots,\tau_{0,a_s}\rangle_0$
by either $\langle \tau_{a_1},\cdots,\tau_{a_s}\rangle$ or $\langle
a_1,\cdots,a_s\rangle$. Witten  proves that
\begin{align*}
\langle
\tau_{a_1}\tau_{a_2}\tau_{a_3}\rangle&=\delta_{a_1+a_2+a_3,r-2},\\
\langle
\tau_{a_1}\tau_{a_2}\tau_{a_3}\tau_{a_4}\rangle&=\frac{1}{r}\cdot
\min(a_i,r-1-a_i).
\end{align*}

\begin{theorem}[Witten, \cite{Wi2}]
Let $s\geq5$, $a_1\geq\cdots\geq a_s$ and $\sum_{j=1}^s
a_j=r(s-2)-2$. Define $z=a_1, y=a_2, x=a_3$ and
$$m_1=x+z-(r-1),\quad m_2=r-1-z,\quad m_3=y,\quad m_4=z.$$
Then Witten's formula can be written as
\begin{multline} \label{eqwitten2}
\langle a_1,\cdots,a_s\rangle=\bigg\langle
x+y+z-(r-1),r-1-z,z,\prod_{i=4}^s
a_i\bigg\rangle\\+\sum_{\substack{I\coprod
J=\{4,\dots,s\}\\I,J\ne\emptyset}}\sum_{j=0}^{r-2}\left(\bigg\langle
j,m_1,m_3,\prod_{i\in I}a_i\bigg\rangle\bigg\langle
r-2-j,m_2,m_4,\prod_{i\in J}a_i\bigg\rangle\right.\\\left.
-\bigg\langle j,m_1,m_2,\prod_{i\in I}a_i\bigg\rangle\bigg\langle
r-2-j,m_3,m_4,\prod_{i\in J}a_i\bigg\rangle\right).
\end{multline}
This formula recursively computes all primary $r$-spin numbers.
\end{theorem}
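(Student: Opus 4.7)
The proof is a direct unpacking of the genus-zero WDVV equation~\eqref{eqwdvv}, applied to an auxiliary configuration of $s+1$ marked points carrying indices $m_1,m_2,m_3,m_4,a_4,\dots,a_s$, where $m_1,\dots,m_4$ are as specified in the theorem. The partition in~\eqref{eqwdvv} then runs over $\{a_4,\dots,a_s\}=I\coprod J$, and we eliminate $m''$ via $\eta^{m',m''}=\delta_{m'+m'',r-2}$ and write $j:=m'\in\{0,\dots,r-2\}$. A preliminary check is that $m_1,m_2,m_3,m_4\in\{0,\dots,r-2\}$ so that~\eqref{eqwdvv} is applicable. The only delicate bound is $m_1=x+z-(r-1)\ge 0$, equivalently $a_1+a_3\ge r-1$; this follows from the dimension constraint $\sum_j a_j=r(s-2)-2$, the bounds $a_j\le r-2$, the ordering $a_1\ge\cdots\ge a_s$, and $s\ge 5$, since if $a_1+a_3<r-1$ then $\sum_j a_j\le 2a_1+(s-2)(r-2-a_1)\le (s-2)(r-2)<r(s-2)-2$, a contradiction.

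Granted this, I split~\eqref{eqwdvv} into its contributions according to whether $I$ or $J$ is empty. In each boundary case one factor is a three-point correlator, and the identity $\langle\tau_a\tau_b\tau_c\rangle=\delta_{a+b+c,r-2}$ collapses the $j$-sum to a single surviving term. The LHS with $I=\emptyset$ uses $m_1+m_2=x$ and contributes $\langle x,y,z,a_4,\dots,a_s\rangle=\langle a_1,\dots,a_s\rangle$, the target correlator. The RHS with $I=\emptyset$ uses $m_1+m_3=x+y+z-(r-1)$ and contributes $\langle x+y+z-(r-1),r-1-z,z,a_4,\dots,a_s\rangle$, the distinguished main term in~\eqref{eqwitten2}. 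The RHS with $J=\emptyset$ is absent, since $m_2+m_4=r-1$ forces the would-be pairing index outside $\{0,\dots,r-2\}$. The LHS with $J=\emptyset$ is also absent: it would require $j=m_3+m_4=y+z$, but $y+z\ge x+z\ge r-1>r-2$ by the preliminary bound.

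The surviving contributions on each side are those with both $I,J$ nonempty. Equating the two sides of~\eqref{eqwdvv} and rearranging then yields exactly~\eqref{eqwitten2}, with its double sum matching (RHS both-nonempty) minus (LHS both-nonempty). For the concluding sentence of the theorem, every correlator in the double sum has at most $s-1$ marked points, so the three- and four-point formulas combined with induction on $s$ handle those contributions. The ``main'' correlator on the right retains $s$ marked points but now carries new indices $(x+y+z-(r-1),r-1-z,z,a_4,\dots,a_s)$, so iterating~\eqref{eqwitten2} on it reduces the computation further. I expect the termination of this iteration to be the subtlest point: the triple $(x,y,z)$ is replaced in no manifestly monotone way, and a rigorous termination argument requires identifying a well-founded invariant on the sorted multiset of indices---for instance a lex-order refined by a ``height'' such as $\sum_j a_j(r-1-a_j)$---that strictly decreases under each iteration.
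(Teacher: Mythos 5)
Your derivation of the identity \eqref{eqwitten2} is correct and is essentially the paper's own argument (which is Witten's): apply the WDVV equation \eqref{eqwdvv} to the insertions $m_1,m_2,m_3,m_4,a_4,\dots,a_s$, collapse the four boundary terms ($I$ or $J$ empty) via $\langle\tau_a\tau_b\tau_c\rangle=\delta_{a+b+c,r-2}$, and observe that two of them are absent because $m_2+m_4=r-1>r-2$ and $m_3+m_4=y+z\geq x+z\geq r-1>r-2$, while the remaining two produce the target correlator and the distinguished main term; rearranging gives \eqref{eqwitten2} with the correct signs. You are in fact more explicit than the paper on the one non-obvious range check $m_1=x+z-(r-1)\geq 0$, which the paper dismisses with ``it is not difficult to check''; your counting argument for it is correct.

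The gap you flag yourself --- termination of the iteration on the $s$-point main term --- is a genuine one, since the final sentence (``this formula recursively computes all primary $r$-spin numbers'') is part of the statement. The paper does address it, though only in one sentence: it asserts that $r-1-z$ is never among the three largest entries of the new index set $\{x+y+z-(r-1),r-1-z,z,a_4,\dots,a_s\}$ when $s\geq 5$. Your own candidate invariant can be completed and is arguably cleaner: sharpening your counting argument from $a_1+a_3\geq r-1$ to $a_1+a_3\geq r$ (assume $a_1+a_3\leq r-1$; then $\sum_j a_j\leq 2a_1+(s-2)(r-1-a_1)$ forces $(s-4)a_1\leq 4-s<0$, a contradiction for $s\geq5$) shows $m_1\geq 1$. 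Hence the replacement of the pair $\{x,y\}$ by $\{x+y+z-(r-1),\,r-1-z\}$ preserves the sum $x+y$ but strictly spreads the pair, since $r-1-z<x\leq y<x+y+z-(r-1)$. Consequently $\sum_j a_j(r-1-a_j)=(r-1)\sum_j a_j-\sum_j a_j^2$ strictly decreases at each iteration of the main term (the total $\sum_j a_j=r(s-2)-2$ being invariant), and as it is a nonnegative integer whenever all indices lie in $\{0,\dots,r-1\}$ (if the new index $x+y+z-(r-1)$ ever exceeds $r-2$, the main term vanishes and that branch stops anyway), the recursion terminates. Without some such argument --- yours made precise, or the paper's claim about $r-1-z$ --- the proof of the effectiveness assertion is incomplete.
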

\begin{proof} The argument is due to Witten.
From $s\geq 5$, and $0\leq a_i\leq r-2$, it is not difficult to
check that $0 \leq m_i \leq r-2$. By the WDVV equation
\eqref{eqwdvv}, we have
\begin{multline*}
\sum_{I\coprod J=\{4,\dots,s\}}\sum_{j=0}^{r-2}\bigg\langle
j,m_1,m_3,\prod_{i\in I}a_i\bigg\rangle\bigg\langle
r-2-j,m_2,m_4,\prod_{i\in J}a_i\bigg\rangle\\ = \sum_{{I\prod
J}=\{4, \dots, s\}}\sum_{j=0}^{r-2}\bigg\langle j,m_1,m_2,\prod_{i\in
I}a_i\bigg\rangle\bigg\langle r-2-j,m_3,m_4,\prod_{i\in
J}a_i\bigg\rangle.
\end{multline*}
Then Witten's formula follows from the inequalities $m_3 + m_4 >
r-2$ and $m_2 + m_4 > r-2$.

For the effectiveness of Witten's formula \eqref{eqwitten2}, it is
not difficult to prove that if $z'\geq y'\geq x'$ are the three
largest numbers in the index set
$\{x+y+z-(r-1),r-1-z,z,a_4,\dots,a_s\}$, then $r-1-z$ is not one of
$x',y',z'$ as long as $s\geq5$. On the other hand, each bracket in
the quadratic terms in the right hand side of \eqref{eqwitten2} has
strictly less than $s$ points.
\end{proof}

\vskip 30pt
\section{Formal pseudodifferential operators} \label{sectionpdo}

A formal pseudodifferential operator is an expression of the form
$$L=\sum_{i=-\infty}^N u_i(x) \partial^i,\quad \text{ where }
\partial=\frac{\partial}{\partial x}.$$
Its positive and negative parts are defined to be
$$L_+=\sum_{i=0}^N u_i(x) \partial^i,\qquad L_-=\sum_{i=-\infty}^{-1} u_i(x) \partial^i.$$

For $k\in \mathbb{Z}$, we define
$$\partial^k\cdot f=\sum_{j\geq0}\binom{k}{j}f^{(j)}\partial^{k-j}, \quad \text{ where } f^{(j)}=\frac{\partial^j f}{\partial x^j}.$$
We follow the usual convention that
$$\binom{-a-1}{b}=\binom{a+b}{b}(-1)^b, \qquad a,b\geq 0.$$
In particular, $\partial\cdot f=f'+f\partial$. Note that we reserve
the notation $\partial f$ for the derivative of $f$. It is straightforward to check that the set of all formal pseudodifferential
operators forms an associative algebra, denoted by $\Psi$DO.

The idea of fractional powers appeared in the work of Gel'fand and
Dikii \cite{GD}. It plays an important role in integrable systems
(cf.\ \cite{SW}). The following lemma is well-known.
\begin{lemma}\label{pdo3}
Recall the pseudodifferential operator $Q$ defined in \eqref{eqpdo7}
\begin{equation*}
Q=D^r+\sum_{i=0}^{r-2}\gamma_i(x)D^i.
\end{equation*}
 There exists a unique pseudodifferential operator of
the form
\begin{equation*}
Q^{1/r}=D+\sum_{i \geq 0}w_{-i}D^{-i},
\end{equation*}
whose $r$-th power is $Q$; and $w_0=0$.
\end{lemma}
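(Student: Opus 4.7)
The plan is to determine the coefficients $w_{-i}$ recursively by matching $L^r$ with $Q$ in descending powers of $D$. Taking $L = D + \sum_{i \geq 0} w_{-i} D^{-i}$ as an ansatz with undetermined coefficients, I would expand $L^r$ in the $\Psi$DO algebra using the commutation rule $[D, f] = \frac{\sqrt{-1}}{\sqrt{r}} f'$, and compare the coefficient of each $D^j$ against the corresponding coefficient of $Q$.

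The central structural fact to establish first is that, for each $k \geq 0$, the coefficient of $D^{r-1-k}$ in $L^r$ has the form
\[
r\, w_{-k} + P_k(w_0, w_{-1}, \ldots, w_{-(k-1)}),
\]
where $P_k$ is a differential polynomial in the previously indexed coefficients (with the convention $P_0 \equiv 0$). The argument is combinatorial. Each monomial contributing to $L^r$ is obtained by choosing, from each of the $r$ factors of $L$, either $D$ or some $w_{-i} D^{-i}$. Writing $n_{-1}$ for the number of factors from which $D$ is chosen and $n_i$ for the number from which $w_{-i} D^{-i}$ is chosen, the raw $D$-degree before commutators equals $n_{-1} - \sum_{i \geq 1} i n_i$; since commutator corrections only decrease the $D$-degree, the constraint to contribute to $D^{r-1-k}$ reduces to $n_0 + \sum_{i \geq 1}(i+1) n_i \leq k+1$. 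If $n_k \geq 1$, this forces $n_k = 1$ and all other $n_i = 0$, giving exactly $r$ equivalent placements of $w_{-k}$ and a contribution $r w_{-k} D^{r-1-k}$ with no commutator correction needed. All remaining admissible configurations involve only $w_{-j}$ with $j < k$ and their derivatives, and are collected into $P_k$.

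With the structural claim in hand, the lemma follows by induction on $k$. For $k = 0$: the coefficient of $D^{r-1}$ in $Q$ is zero, since the sum in \eqref{eqpdo7} runs only up to $i = r-2$, so the matching equation $r w_0 = 0$ forces $w_0 = 0$. For $k \geq 1$: assuming $w_0, \ldots, w_{-(k-1)}$ have been uniquely determined as differential polynomials in the $\{\gamma_i\}$, the matching equation
\[
r\, w_{-k} + P_k(w_0, \ldots, w_{-(k-1)}) = [\text{coefficient of } D^{r-1-k} \text{ in } Q]
\]
(whose right-hand side is $\gamma_{r-1-k}$ when $0 \leq r-1-k \leq r-2$ and $0$ otherwise) is linear in $w_{-k}$ with invertible leading coefficient $r$, so it uniquely solves for $w_{-k}$, again as a differential polynomial in the $\{\gamma_i\}$. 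This yields both existence and uniqueness.

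The main obstacle is the structural claim itself: rigorously verifying, inside the noncommutative $\Psi$DO algebra, that $w_{-k}$ enters the $D^{r-1-k}$-coefficient of $L^r$ with constant factor exactly $r$ and that every other contribution involves only lower-indexed unknowns. Once that combinatorial bookkeeping of commutators is done, the remainder of the argument is formal recursion.
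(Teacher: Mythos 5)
Your proposal is correct and follows essentially the same route as the paper: both determine $w_{-k}$ recursively by matching the coefficient of $D^{r-1-k}$ in $L^r$ against that of $Q$, using that this coefficient has the form $r\,w_{-k}+P_k(w_0,\dots,w_{-(k-1)})$ with leading constant $r$ (the paper states this structural fact without proof, whereas you supply the degree-counting argument showing $n_k\geq 1$ forces $n_k=1$ and all other $n_i=0$). Your combinatorial bookkeeping is sound, since normal-ordering via $\partial^k\cdot f=\sum_j\binom{k}{j}f^{(j)}\partial^{k-j}$ only lowers the $D$-degree, so the verification you flag as the "main obstacle" goes through exactly as you describe.
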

\begin{proof}
Let $Q^{\frac{1}{r}}=D+w_0+w_{-1}D^{-1}+\cdots$. Then
$(Q^{\frac{1}{r}})^r=D^r+rw_0D^{r-1}+\cdots$. Since there is no
$D^{r-1}$ term on $Q$, we have $w_0=0$.   Thus we may write
\begin{equation*}
(Q^{\frac{1}{r}})^r=D^r+rw_{-1}D^{r-2}+(rw_{-2}+\frac{r(r-1)Dw_{-1}}{2})D^{r-3}+\cdots .
\end{equation*}

In general, we have $$rw_{-i}+p_i(w_{-1},\cdots
w_{-i+1})=\gamma_{r-1-i},$$ where $p_i$ is a differential polynomial
of its argument. So $w_{-i}$ can be uniquely determined recursively
as differential polynomials of $\gamma_i$.
\end{proof}

Fix $k\geq 1$.  Write
$$
Q^{k/r}=D^k+\sum_{i=0}^{k-2}\gamma_i^k
D^i+\sum_{i=1}^\infty\gamma_{-i}^k D^{-i}.
$$
Here we emphasize that throughout this paper, the superscript $k$ in
$\gamma_i^k$ never denotes a power. In particular, we have
$\gamma_i^r=\gamma_i$.

Since $Q^{(k+1)/r}=Q^{1/r}\cdot Q^{k/r}$, for $\ell\leq k-1$ we have
\begin{equation} \label{eqgamma}
\gamma_{\ell}^{k+1}=w_{\ell-k}+D\gamma_{\ell}^k+\gamma_{\ell-1}^k +
\sum_{j=1}^{k-2-\ell}w_{-j}\sum_{i=j+\ell}^{k-2}\binom{-j}{i-j-\ell}D^{i-j-\ell}\gamma_i^k.
\end{equation}
This identity can be used to determine $\gamma_{\ell}^{k+1}$
recursively as differential polynomials of $\{w_{-i}\}$.

\begin{lemma} \label{pdo1}
With the notation above, if we assign $w_{-i}^{(j)}=D^j w_{-i}$ the
weight $i+j+1$, then $\gamma_{\ell}^k$ is homogeneous of weight
$k-\ell$.
\end{lemma}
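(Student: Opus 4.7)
The plan is to argue by induction on $k$, using the recursion \eqref{eqgamma} as the inductive step. Two elementary facts about the weight grading on the polynomial ring in the symbols $\{w_{-i}^{(j)}\}$ will be used throughout: (i) multiplication of homogeneous elements adds their weights, and (ii) the operator $D$ raises the weight of a homogeneous element by exactly $1$, since $D\,w_{-i}^{(j)} = w_{-i}^{(j+1)}$ and the weights of these two symbols are $i+j+1$ and $i+j+2$ respectively, so $D$ acts as a weight-$1$ derivation on the whole polynomial ring.

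For the base case $k=1$, Lemma \ref{pdo3} gives $\gamma_{-i}^1 = w_{-i}$ for $i\geq 1$, which has weight $i+1 = 1-(-i)$; the leading coefficient $\gamma_1^1 = 1$ has weight $0 = 1-1$; and $\gamma_0^1 = w_0 = 0$ is trivially homogeneous. For the inductive step, assume $\gamma_\ell^k$ is homogeneous of weight $k-\ell$ for all admissible $\ell$, and fix $\ell \leq k-1$. Each of the four kinds of summands on the right-hand side of \eqref{eqgamma} is homogeneous of weight $(k+1)-\ell$: the term $w_{\ell-k} = w_{-(k-\ell)}$ has weight $(k-\ell)+1$ directly from the convention; the term $D\gamma_\ell^k$ has weight $(k-\ell)+1$ by (ii) and the inductive hypothesis; the term $\gamma_{\ell-1}^k$ has weight $k-(\ell-1) = (k-\ell)+1$ by the inductive hypothesis; and a generic summand $\binom{-j}{i-j-\ell}\, w_{-j}\, D^{i-j-\ell}\gamma_i^k$ in the double sum has weight $(j+1) + (i-j-\ell) + (k-i) = (k-\ell)+1$ by (i), (ii), and the inductive hypothesis. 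Since a sum of terms each homogeneous of weight $(k+1)-\ell$ is itself homogeneous of that weight, $\gamma_\ell^{k+1}$ is homogeneous of weight $(k+1)-\ell$, completing the induction.

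I do not expect any substantive obstacle in this argument: the entire verification is careful bookkeeping against the recursion \eqref{eqgamma}. The only point that deserves a moment's thought is fact (ii) above, which requires unwinding the definition $w_{-i}^{(j)} = D^j w_{-i}$ together with the Leibniz property of $D = \tfrac{\sqrt{-1}}{\sqrt{r}}\partial$ to see that $D$ acts as a derivation raising the weight of each generator by exactly one. The extreme cases (where the double sum in \eqref{eqgamma} is empty, or where some $\gamma_i^k$ with $i=k$ is implicitly the constant $1$ of weight $0$) all fit the pattern without modification.
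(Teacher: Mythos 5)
Your proof is correct and follows the same route as the paper: the paper's proof is just the one-line observation that $\gamma_{\ell}^1=w_{\ell}$ has weight $1-\ell$ and that the general case follows by induction from equation \eqref{eqgamma}, which is exactly the induction you carry out in detail. Your explicit check that each of the four kinds of terms in \eqref{eqgamma} has weight $(k+1)-\ell$ is the bookkeeping the paper leaves to the reader, and it is done correctly.
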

\begin{proof}
Since $\gamma_{\ell}^1=w_{\ell}$ is of weight $1-\ell$, the general
statement follows from the equation \eqref{eqgamma}.
\end{proof}

\begin{lemma}\label{pdo2}
Let $[w_{-i}^{(j)}]\gamma_{\ell}^k$ denote the coefficient of
$w_{-i}^{(j)}$ in $\gamma_{\ell}^k$. If $k\geq1,\ell\leq k-2$ and
$1\leq i\leq k-\ell-1$, then we have
\begin{equation}\label{eqpdo8}
[D^{k-\ell-i-1}w_{-i}]\gamma_{\ell}^k=\binom{k}{k-\ell-i}.
\end{equation}
In particular, $[w_{\ell-k+1}]\gamma_{\ell}^k=k$ and
$[Dw_{\ell-k+2}]\gamma_{\ell}^k=\frac{k(k-1)}{2}$.

\end{lemma}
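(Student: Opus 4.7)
The plan is to prove the identity by induction on $k$, using the recursion \eqref{eqgamma} for $\gamma_\ell^{k+1}$. The base case $k = 1$ reduces to the trivial identity $\gamma_\ell^1 = w_\ell$: since $[D^{-\ell-i}w_{-i}]w_\ell = \delta_{i,-\ell}$ and this matches $\binom{1}{-\ell-i}$ throughout the stated range $1 \le i \le -\ell$, there is nothing to do.

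The key simplifying observation for the inductive step is that the asserted coefficient picks out the part of $\gamma_\ell^{k+1}$ that is linear in the $w$'s. By Lemma \ref{pdo1}, each $\gamma_i^k$ with $0 \le i \le k-2$ is homogeneous of positive weight $k-i \ge 2$, and hence has no constant term. Therefore the double sum in \eqref{eqgamma}, being of the form $w_{-j}\cdot(\text{stuff involving } \gamma_i^k)$, is automatically at least quadratic in the $w$'s and can be discarded. Only the three linear pieces $w_{\ell-k}$, $D\gamma_\ell^k$, and $\gamma_{\ell-1}^k$ need to be examined.

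Extracting $[D^{k-\ell-i}w_{-i}]$ from each of these and applying the inductive hypothesis produces a Kronecker delta $\delta_{i,k-\ell}$ (from $w_{\ell-k}$), a binomial $\binom{k}{k-\ell-i}$ (from $D\gamma_\ell^k$, with the shift in derivative order), and a binomial $\binom{k}{k-\ell-i+1}$ (from $\gamma_{\ell-1}^k$, after setting $\ell \mapsto \ell-1$). Pascal's rule $\binom{k}{m-1}+\binom{k}{m}=\binom{k+1}{m}$ then collapses the two binomials into $\binom{k+1}{k-\ell-i+1}$, which is the desired $\binom{k+1}{(k+1)-\ell-i}$.

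The main point requiring care is the boundary case $i = k-\ell$, where $D\gamma_\ell^k$ cannot contribute (there is no $D^{-1}w_{\ell-k}$ term to differentiate into $D^0 w_{\ell-k}$), so the inductive formula naively undercounts by $\binom{k}{0}=1$; this missing $1$ is precisely supplied by the Kronecker delta coming from $w_{\ell-k}$, so Pascal's telescoping remains intact at the endpoint. Once the general formula is in hand, the two specialized corollaries $[w_{\ell-k+1}]\gamma_\ell^k = k$ and $[Dw_{\ell-k+2}]\gamma_\ell^k = \tfrac{k(k-1)}{2}$ follow immediately by setting $i = k-\ell-1$ and $i = k-\ell-2$ respectively.
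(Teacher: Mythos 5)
Your proof is correct and follows essentially the same route as the paper's: induction on $k$ via the recursion \eqref{eqgamma}, discarding the quadratic tail (which the paper leaves implicit but you justify via the homogeneity of Lemma \ref{pdo1}), and combining the contributions of $w_{\ell-k}$, $D\gamma_{\ell}^k$ and $\gamma_{\ell-1}^k$ by Pascal's rule, with the bare $w_{\ell-k}$ term supplying the missing unit at the endpoint exactly as in the paper's case split. One small slip: in the base case the target binomial is $\binom{1}{1-\ell-i}$, not $\binom{1}{-\ell-i}$; with the correct index the match with $\delta_{i,-\ell}$ over the range $1\leq i\leq -\ell$ is exact.
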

\begin{proof}
When $k=1$, by definition, $\gamma_{\ell}^1=w_{\ell}$ for $\ell<0$.
The identity \eqref{eqpdo8} obviously holds in this case. So we
apply the recursive equation \eqref{eqgamma} and use induction on
$k$.

When $i=k-\ell-1$, we have
\begin{align*}
[w_{\ell-k+1}]\gamma_{\ell}^{k}&=1+[w_{\ell-k+1}]\gamma_{\ell-1}^{k-1}\\
&=1+k-1\\
&=k
\end{align*}
and similarly when $i<k-\ell-1$, we have
\begin{align*}
[D^{k-\ell-i-1}w_{-i}]\gamma_{\ell}^{k}&=[D^{k-\ell-i-2}w_{-i}]\gamma_{\ell}^{k-1}
+[D^{k-\ell-i-1}w_{-i}]\gamma_{\ell-1}^{k-1}\\
&=\binom{k-1}{k-\ell-i-1}+\binom{k-1}{k-\ell-i}\\
&=\binom{k}{k-\ell-i}
\end{align*}
as desired.
\end{proof}

\begin{lemma}[Witten, \cite{Wi2}]
\label{pdo4}With the above notation,
$\gamma_{-1}^{i+1}=\res (Q^{(i+1)/r})$, we can express coefficients
$\gamma_i$ of $Q$ as differential polynomials in
$\gamma_{-1}^{i+1},\,0\leq i\leq r-2$.
\end{lemma}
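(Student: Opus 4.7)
The plan is to dispose of the identity $\gamma_{-1}^{i+1}=\res(Q^{(i+1)/r})$ immediately: the residue of a $\Psi$DO is by definition the coefficient of $D^{-1}$, and in the expansion of $Q^{(i+1)/r}$ above that coefficient is labelled $\gamma_{-1}^{i+1}$.

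The substantive claim is the expressibility of $\gamma_0,\ldots,\gamma_{r-2}$ as differential polynomials in $\gamma_{-1}^1,\ldots,\gamma_{-1}^{r-1}$. My strategy is to sandwich the $w_{-k}$'s in between, using a triangular system in one direction and the recursion of Lemma~\ref{pdo3} in the other. The key structural claim is
\begin{equation*}
\gamma_{-1}^k = k\,w_{-k} + R_k(w_{-1},\dots,w_{-(k-1)}),
\end{equation*}
with $R_k$ a differential polynomial in $w_{-1},\dots,w_{-(k-1)}$ and their derivatives. The leading coefficient $k$ is Lemma~\ref{pdo2} specialized to $\ell=-1$, $i=k$. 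That no $w_{-j}$ with $j>k$ appears follows from Lemma~\ref{pdo1}: $\gamma_{-1}^k$ has weight $k+1$, and any monomial $\prod_s w_{-a_s}^{(b_s)}$ of that weight has every factor satisfying $a_s+b_s+1\leq k+1$, hence $a_s\leq k$, with $a_s=k$ forced to come from a single, derivative-free factor.

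Given this triangular form, $w_{-1}=\gamma_{-1}^1$ and
\begin{equation*}
w_{-k}=\tfrac{1}{k}\bigl(\gamma_{-1}^k - R_k(w_{-1},\dots,w_{-(k-1)})\bigr)
\end{equation*}
inductively expresses each $w_{-k}$ ($1\leq k\leq r-1$) as a differential polynomial in $\gamma_{-1}^1,\dots,\gamma_{-1}^k$. Finally, the identity $rw_{-i}+p_i(w_{-1},\dots,w_{-(i-1)})=\gamma_{r-1-i}$ from the proof of Lemma~\ref{pdo3} writes each $\gamma_j$ ($0\leq j\leq r-2$, via $i=r-1-j$) as a differential polynomial in $w_{-1},\dots,w_{-(r-1-j)}$; substituting the inverted expressions yields the required formulas in $\gamma_{-1}^1,\ldots,\gamma_{-1}^{r-1}$.

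The only real calculation is pinning down the triangular structure, and this is a direct corollary of Lemmas~\ref{pdo1} and~\ref{pdo2}; I expect no further obstacle, since both the forward recursion (residues from the $w_{-k}$'s) and the back-substitution (the $\gamma_j$'s from the $w_{-k}$'s) are purely mechanical triangular inversions.
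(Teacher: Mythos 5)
Your proposal is correct and follows essentially the same route as the paper: both arguments rest on the triangular relation $\gamma_{-1}^{k}=k\,w_{-k}+(\text{differential polynomial in }w_{-1},\dots,w_{-(k-1)})$ obtained from Lemmas~\ref{pdo1} and~\ref{pdo2}, combined with the relation $rw_{-i}+p_i(w_{-1},\dots,w_{-i+1})=\gamma_{r-1-i}$ from the proof of Lemma~\ref{pdo3}, followed by triangular inversion. The only cosmetic difference is that you invert the system for the $w_{-k}$'s first and then substitute, whereas the paper composes the two triangular maps (equation~\eqref{eqpdo13}) before inverting; your weight-based justification that no $w_{-j}$ with $j>k$ can occur is a detail the paper leaves implicit.
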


\begin{proof} By Lemmas \ref{pdo1} and \ref{pdo2}, we have
\begin{align}\label{eqpdo13}
\gamma_{-1}^{i+1}&=(i+1)w_{-1-i}+p_i(w_{-1},\cdots
w_{-i})\\&=\frac{(i+1)\gamma_{r-2-i}}{r}+p'_i(\gamma_{r-2},\cdots
\gamma_{r-1-i}),\nonumber
\end{align}
where $p_i$ and $p'_i$ are differential polynomials of their
arguments. Thus we can recursively express $\gamma_i$ as differential
polynomials in $\gamma_{-1}^{i+1},\ 0\leq i\leq r-2$.
\end{proof}

Denote by $P(\gamma_{\ell}^{k})$ the sum of monomials in $\gamma_{\ell}^{k}$
that does not contain derivatives of $w_{-i}$. Then we have
\begin{align}\label{eqpdo1}
P(\gamma_{\ell}^{k})&=[p^{k-\ell}] \left( 1+\sum_{i>0}w_{-i}p^{i+1} \right)^{k}\\
&=\res_{p=0}\frac{(1+\sum_{i>0}w_{-i}p^{i+1})^{k}}{p^{k-\ell+1}}.
\nonumber
\end{align}

Fix an integer $r\geq 2$. From the Gel'fand-Dikii equation
\eqref{eqwitten}, we have
\begin{align*}
\gamma_{-1}^{m+1}&=\res(Q^{\frac{m+1}{r}})=-\frac{m+1}{r}\langle\langle\tau_{0,0}\tau_{0,m}\rangle\rangle,
\qquad \text{for } 0\leq m\leq r-2\\
&=(m+1)w_{-m-1}+\cdots \nonumber
\end{align*}
 and
\begin{align} \label{eqpdo3}
\gamma_{-1}^{r+1}&=\res(Q^{1+\frac{1}{r}})=\frac{r+1}{r^2}\langle\langle\tau_{0,0}\tau_{1,0}\rangle\rangle\\
&=(r+1)w_{-r-1}+\frac{r(r+1)}{2}Dw_{-r}+\cdots. \nonumber
\end{align}

For the first time, we use the  fact that $Q$ is a differential
operator (i.e.\ $Q_-=0$), which implies that
\begin{equation} \label{eqpdo4}
0=\gamma_{-1}^{r}=r\cdot w_{-r}+\cdots \quad \quad \text{and}
\end{equation}
\begin{equation} \label{eqpdo5}
0=\gamma_{-2}^{r}=r\cdot w_{-r-1}+\cdots.
\end{equation}
The leading coefficients of the above equations come from Lemma
\ref{pdo2}.

We first substitute \eqref{eqpdo5} and then \eqref{eqpdo4} into
\eqref{eqpdo3} to eliminate $w_{-r-1}$ and $w_{-r}$ respectively.
Next we substitute $\gamma_{-1}^{r-1}, \gamma_{-1}^{r-2}, \dots,
\gamma_{-1}^1$ consecutively into \eqref{eqpdo3} to eliminate
$w_{-r+1}, w_{-r+2},\dots,w_{-1}$ successively. Then it is easy to
see that $\gamma_{-1}^{r+1}$ is now expressed in terms of
differential polynomials of $\gamma_{-1}^{m+1},\ 0\leq m\leq r-2$.
From now we on will use $S(\gamma_{-1}^{r+1})$ to denote this
differential polynomial in $\gamma_{-1}^{m+1},\ 0\leq m\leq r-2$
resulting from substitutions in $\gamma_{-1}^{r+1}$. We will keep
the notation $\gamma_{-1}^{r+1}$ for the differential polynomial
\eqref{eqpdo3} in $w_{-i}$.

If we use the notation
\begin{equation} \label{eqpdo9}
z_m^{(j)}=-\frac{r}{m+1}\cdot\frac{\partial^j\gamma_{-1}^{m+1}}{\partial
x^j} =\langle\langle\tau_{0,0}^{j+1}\tau_{0,m}\rangle\rangle,
\end{equation}
then we have the following structure theorem of formal
pseudodifferential operators.

\begin{theorem} \label{spin2} (As discussed above, we may regard
  $S(\gamma_{-1}^{r+1})$ as a differential polynomial in $z_m$.)
We have
$$\frac{r^2}{r+1}S(\gamma_{-1}^{r+1})=\frac{1}{2}\sum_{j=0}^{r-2}z_j
z_{r-2-j}+W_r(z),$$
where $W_r(z)$ represents the terms containing derivatives of some
$z_m$.
\end{theorem}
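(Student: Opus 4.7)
My plan is to compute the non-derivative-in-$z$ part of $S(\gamma_{-1}^{r+1})$ directly from the residue formula \eqref{eqpdo1}. Setting derivatives of $z_m$'s to zero corresponds to replacing each $w_{-i}$ by its non-derivative image $\bar w_{-i}$, a polynomial in $z_0,\dots,z_{r-2}$ determined by three families of polynomial relations: $P(\gamma_{-1}^{m+1})|_{\bar w}=-\tfrac{m+1}{r}z_m$ for $0\leq m\leq r-2$ (from the definition of $z_m$), and $P(\gamma_{-1}^{r})|_{\bar w}=0$, $P(\gamma_{-2}^{r})|_{\bar w}=0$ (from $Q_-=0$). The quantity we want is then $P(\gamma_{-1}^{r+1})|_{\bar w}$.

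The key reformulation is to work with the symbol $L(p)=p+\sum_{i\geq1}\bar w_{-i}p^{-i}$, for which $P(\gamma_\ell^k)|_{\bar w}=\res_p L(p)^k p^{-\ell-1}\,dp$. The vanishing constraints combine into the clean statement that $L(p)^r=R(p)$ is a polynomial $R(p)=p^r+\sum_{i=0}^{r-2}\bar\gamma_i p^i$ of degree $r$ with no $p^{r-1}$ term, where $\bar\gamma_i=P(\gamma_i)|_{\bar w}$. Writing $L^{r+1}=L\cdot R$ and extracting the coefficient of $p^{-1}$ then yields
\[
\res_p L^{r+1}\,dp=\bar w_{-r-1}+\sum_{i=1}^{r-1}\bar w_{-i}\,\bar\gamma_{i-1},
\]
since $pR$ is polynomial (so its residue vanishes), $p^{-r-1}R$ contributes $1\cdot\bar w_{-r-1}$, and $p^{-i}R$ for $1\leq i\leq r-1$ contributes its $p^{i-1}$-coefficient $\bar\gamma_{i-1}$.

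To read off the quadratic-in-$z$ part, I would use Lemma \ref{pdo2} together with \eqref{eqpdo13} to conclude $\bar w_{-i}=-z_{i-1}/r+O(z^2)$ and $\bar\gamma_{i-1}=-z_{r-1-i}+O(z^2)$ for $1\leq i\leq r-1$; then extracting the quadratic part of $P(\gamma_{-2}^r)|_{\bar w}=0$, whose leading quadratic is $\tfrac{r(r-1)}{2}\sum_{i=1}^{r-1}\bar w_{-i}\bar w_{-(r-i)}$, gives $\bar w_{-r-1}=-\tfrac{r-1}{2r^2}\sum_{j=0}^{r-2}z_j z_{r-2-j}+O(z^3)$. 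Assembling these ingredients,
\[
\res_p L^{r+1}\,dp\equiv\Bigl(\frac{1}{r}-\frac{r-1}{2r^2}\Bigr)\sum_{j=0}^{r-2}z_j z_{r-2-j}=\frac{r+1}{2r^2}\sum_{j=0}^{r-2}z_j z_{r-2-j}\pmod{z^3},
\]
which multiplied by $r^2/(r+1)$ produces the claimed $\tfrac12\sum_j z_j z_{r-2-j}$.

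The main obstacle is to rule out higher non-derivative monomials (cubic and beyond in $z$'s), which for $r\geq4$ are allowed by the weight count of Lemma \ref{pdo1} (e.g.\ $z_0^3$ for $r=4$). The cleanest route is via Lagrange inversion: with $\xi=L(p)$, the inverse $p(\xi)=\xi+\sum_{m\geq0}a_{-m-1}\xi^{-m-1}$ satisfies $a_{-m-1}=-\tfrac{1}{m+1}\res_p L^{m+1}\,dp$ by integration by parts, the constraints of the first paragraph become $a_{-m-1}=z_m/r$ for $0\leq m\leq r-2$, and the target equals $-(r+1)\,a_{-r-1}$. One then computes $a_{-r-1}$ by comparing coefficients in the polynomial identity $\xi^r=R(p(\xi))$: the algebraic structure forces the mysterious cubic and higher monomials in the expansion to cancel (as one may verify directly for small $r$), leaving $a_{-r-1}=-\tfrac{1}{2r^2}\sum_j z_j z_{r-2-j}$. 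The delicate point is packaging these cancellations conceptually rather than coefficient by coefficient.
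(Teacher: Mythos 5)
Your reduction to the non-derivative parts is sound and coincides with the paper's first step: extracting the monomials of $S(\gamma_{-1}^{r+1})$ free of derivatives commutes with the substitution procedure, so everything is controlled by the polynomials $P(\gamma_{\ell}^{k})$ of \eqref{eqpdo1}, and your symbol $L(p)=p+\sum_{i\geq1}\bar w_{-i}p^{-i}$ with $P(\gamma_{\ell}^{k})|_{\bar w}=\res_{p} L^{k}p^{-\ell-1}\,dp$ is a correct repackaging of that formula. Your computation of the quadratic part is also correct. The problem is the step you yourself flag as ``the main obstacle'': showing that the non-derivative part contains no cubic or higher monomials in the $z_m$. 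That is the actual content of the theorem, and your proposal does not prove it --- ``comparing coefficients in $\xi^r=R(p(\xi))$'' together with the assertion that ``the algebraic structure forces the cancellations, as one may verify for small $r$'' is a plan plus low-$r$ evidence, not an argument. The paper closes exactly this point by reducing the full identity, in all degrees at once, to the combinatorial identity of Proposition \ref{comb} with $n=r$ and $f=1+\sum_{i>0}w_{-i}p^{i+1}$ (the term $[x^{n+2}]f^{n}/n$ there being $\tfrac1r P(\gamma_{-2}^{r})$, which accounts for the $w_{-r-1}$-substitution), and then proves that identity in Appendix A by Lagrange inversion.

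It is worth noting that your own Lagrange-inversion setup is one short step away from a complete proof, and the missing step is essentially the $k=2$ case of Lemma \ref{lag} used in Appendix A. With $p(\xi)=\xi+\sum_{m\geq0}a_{-m-1}\xi^{-m-1}$, $a_{-m-1}=z_m/r$ for $0\leq m\leq r-2$ and $a_{-r}=0$, the identity you need is $a_{-r-1}=-\frac12\sum_{j=0}^{r-2}a_{-j-1}a_{-r+1+j}$ (equivalently $\frac{r^2}{r+1}P(\gamma_{-1}^{r+1})|_{\bar w}=-r^2a_{-r-1}=\frac12\sum_j z_jz_{r-2-j}$ exactly, with nothing left over). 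Writing $p^2=\xi^2+2\xi(p-\xi)+(p-\xi)^2$, this identity is precisely the statement $[\xi^{-r}]p(\xi)^2=0$, and that vanishing follows from your second constraint by a change of variables in the residue together with integration by parts:
\begin{equation*}
[\xi^{-r}]p(\xi)^2=\res_{\xi}\, p(\xi)^2\xi^{r-1}\,d\xi=\res_{p}\, p^2L(p)^{r-1}L'(p)\,dp=\frac1r\res_{p}\, p^2\bigl(L^r\bigr)'\,dp=-\frac2r\res_{p}\, pL^r\,dp=-\frac2r\,P(\gamma_{-2}^{r})\big|_{\bar w}=0.
\end{equation*}
This rules out all cubic and higher monomials in one stroke. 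Without this (or the paper's Proposition \ref{comb}), the proposal establishes the coefficient of the quadratic terms but not the theorem.
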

\begin{proof}
Since (16) is used to eliminate $w_{-r-1}$ in $\gamma_{-1}^{r+1}$,
it is not difficult to see that the identity of Theorem~\ref{spin2} is
equivalent to
$$\frac{r^2}{r+1}P(\gamma_{-1}^{r+1})-rP(\gamma_{-2}^{r})=
\frac{1}{2}\sum_{j=0}^{r-2}\frac{-r}{j+1}P(\gamma_{-1}^{j+1})\frac{-r}{r-1-j}P(\gamma_{-1}^{r-1-j}).$$

From equation \eqref{eqpdo1}, this is precisely the combinatorial
identity shown in the next proposition.
\end{proof}

\begin{proposition} \label{comb} Let $a_j$ be formal variables and
$$f(x)=1+\sum_{j=2}^\infty a_j x^j\in \mathbb C[[x]]$$ be a formal series satisfying $f(0)=1$ and $f'(0)=0$.
Then for any $n\geq 1$,
\begin{equation} \label{eqcomb}
\frac{[x^{n+2}]f^{n+1}}{n+1}=\frac12\sum_{j=1}^{n-1}\frac{[x^{j+1}]f^{j}}{j}\cdot\frac{[x^{n-j+1}]f^{n-j}}{n-j}
+\frac{[x^{n+2}]f^n}{n},
\end{equation}
where $[x^n]f^{k}$ denotes the coefficient of $x^n$ in the series
expansion of $f^{k}$.
\end{proposition}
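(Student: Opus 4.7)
The plan is to prove \eqref{eqcomb} via Lagrange inversion applied to the compositional inverse of $\phi(x):=x/f(x)$. Since $f(0)=1$ and $f'(0)=0$, one has $\phi(x) = x - a_2 x^3 + \cdots$, so $\phi(0)=0$ and $\phi'(0)=1$; consequently $\phi$ admits a compositional inverse $\psi(t)$ with expansion $\psi(t) = t + O(t^3)$. I will use Lagrange inversion in the following form: for any formal Laurent series $G$ and any integer $m\ge 1$,
$$[t^m]\, G(\psi(t)) \;=\; \frac{1}{m}\,[x^{m-1}]\, G'(x)\, f(x)^m,$$
using that $x/\phi(x)=f(x)$.

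Applying this with $G(x)=-1/x$ gives $[t^m]\bigl(-1/\psi(t)\bigr) = \frac{[x^{m+1}] f^m}{m}$ for every $m\ge 1$. Writing $h_m := \frac{[x^{m+1}] f^m}{m}$ and $H(t):=\sum_{m\ge 1} h_m t^m$, and noting that the Laurent expansion of $-1/\psi(t)$ has no constant term because $\psi(t)=t+O(t^3)$, this identifies
$$-\frac{1}{\psi(t)} \;=\; -\frac{1}{t} + H(t).$$
Specializing $m=n+1$ in the same formula rewrites the left-hand side of \eqref{eqcomb} as $\frac{[x^{n+2}] f^{n+1}}{n+1} = h_{n+1}$.

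For the last term of \eqref{eqcomb}, I apply the same Lagrange formula to $G(x)=-1/(2x^2)$, for which $G'(x)=1/x^3$; this yields
$$\frac{[x^{n+2}] f^n}{n} \;=\; [t^n]\!\left(-\frac{1}{2\,\psi(t)^2}\right) \;=\; -\tfrac{1}{2}\,[t^n]\!\left(-\tfrac{1}{\psi(t)}\right)^{\!2}.$$
Squaring the identity $-1/\psi(t) = -1/t + H(t)$ produces $1/t^2 - 2H(t)/t + H(t)^2$; extracting the $t^n$ coefficient (for $n\ge 1$) gives $-2h_{n+1} + \sum_{j=1}^{n-1} h_j h_{n-j}$. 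Therefore $\frac{[x^{n+2}] f^n}{n} = h_{n+1} - \tfrac{1}{2}\sum_{j=1}^{n-1} h_j h_{n-j}$, and transposing yields exactly \eqref{eqcomb}.

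The only substantive obstacle is bookkeeping: selecting the variant of Lagrange inversion that is legitimate for $G$ with a pole at $0$, and verifying that the principal part of $-1/\psi(t)$ is exactly $-1/t$ with vanishing constant term. Once these are in hand, the proposition collapses to the single algebraic step of squaring $(-1/t+H)^2$.
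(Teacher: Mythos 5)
Your argument is correct and is essentially the paper's own proof: both apply Lagrange inversion to $\phi(x)=x/f(x)$ to identify $\frac{[x^{m+1}]f^m}{m}$ with the coefficients of $-1/\psi(t)$ and $\frac{[x^{n+2}]f^n}{n}$ with those of $-\tfrac12\psi(t)^{-2}$, and both hinge on the same key observation that $-1/\psi(t)+1/t$ has no constant term (a consequence of $f'(0)=0$), after which the identity reduces to squaring $-1/t+H(t)$. The only cosmetic difference is that you use the Lagrange--B\"urmann form $[t^m]G(\psi(t))=\tfrac1m[x^{m-1}]G'(x)f(x)^m$, while the paper invokes the equivalent statement $\tfrac1n[x^{n-k}](x/F(x))^n=\tfrac1k[x^n]F^{-1}(x)^k$ with $k=-1,-2$.
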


The proof of Proposition \ref{comb} along with other interesting
equivalent formulations can be found in Appendix A.

\begin{example}
We illustrate the above procedure explicitly for $r=4$. Let
$Q^{1/4}=D+\sum_{i>0}w_{-i}D^{-i}$. Then
\begin{align*}
-\frac{1}{4}\langle\langle\tau_{0,0}\tau_{0,0}\rangle\rangle =\res(Q^{1/4})&=w_{-1},\\
-\frac{1}{2}\langle\langle\tau_{0,0}\tau_{0,1}\rangle\rangle=\res(Q^{2/4})&=2w_{-2}+Dw_{-1},\\
-\frac{3}{4}\langle\langle\tau_{0,0}\tau_{0,2}\rangle\rangle=\res(Q^{3/4})&=3w_{-3}+D^2w_{-1}+3Dw_{-2}+3w_{-1}^2.
\end{align*}

We also have
\begin{align*}
0=\res(Q)=&4w_{-4}+D^3w_{-1}+4D^2w_{-2}+6Dw_{-3}+6w_{-1}Dw_{-1}+12w_{-1}w_{-2},\\
0=\gamma_{-2}^4=&4w_{-5}+6Dw_{-4}+4D^2w_{-3}+D^3w_{-2}+6w_{-1}Dw_{-2}-(Dw_{-1})^2\\
&+12w_{-1}w_{-3}+6w_{-2}^2+4w_{-1}^3+2w_{-1}D^2w_{-1}.
\end{align*}

Substituting the above two groups of identities into
\begin{multline*}
\gamma_{-1}^5=\frac{5}{16}\langle\langle\tau_{0,0}\tau_{1,0}\rangle\rangle=\res(Q^{5/4})\\
=5w_{-5}+D^4w_{-1}+5D^3w_{-2}+10D^2w_{-3}+10Dw_{-4}+5(Dw_{-1})^2+10w_{-1}D^2w_{-1}\\
+10w_{-2}^2+10w_{-1}^3+20w_{-1}Dw_{-2}+10w_{-2}Dw_{-1}+20w_{-1}w_{-3}
\end{multline*}
and using $D=\frac{\sqrt{-1}}{2}\frac{\partial}{\partial x}$, we get
\begin{equation*}
\frac{16}{5}\gamma_{-1}^5=z_0z_2+\frac{1}{2}z_1^2+\frac{1}{4}z_2^{(2)}+\frac{1}{48}z_0z_0^{(2)}+\frac{1}{32}z_0'z_0'
+\frac{1}{480}z_0^{(4)}.
\end{equation*}
\end{example}
If we substitute the $z_m$ using equation \eqref{eqpdo9}, we get
exactly the recursion formula \eqref{eqspin4}.
\medskip

The universal differential polynomial $W_r(z)$ in
$z_0,\dots,z_{r-2}$ is particularly interesting in view of Theorem
\ref{spin2}. We present $W_r(z)$ for $2\leq r\leq6$ below:
\begin{gather*}
W_2(z)=\frac{1}{12}z_0^{(2)},\qquad
W_3(z)=\frac{1}{6}z_1^{(2)},\\
W_4(z)=\frac{1}{4}z_2^{(2)}+\frac{1}{48}z_0z_0^{(2)}+\frac{1}{32}z_0'z_0'
+\frac{1}{480}z_0^{(4)},\\
W_5(z)=\frac{1}{10}z_0'z_1'+\frac{1}{30}z_0z_1^{(2)}+\frac{1}{30}z_0^{(2)}z_1
+\frac{1}{3}z_3^{(2)}+\frac{1}{150}z_1^{(4)},\\
W_6(z)=\frac{5}{864}z_0^{(3)}z_0'+\frac{1}{144}z_0(z_0')^2+\frac{1}{8}z_2'z_0'+\frac{1}{24}z_0z_2^{(2)}+\frac{1}{432}z_0^2z_0^{(2)}
+\frac{1}{24}z_2 z_0^{(2)}
\\+\frac{1}{72}z_2^{(4)}+\frac{1}{9072}z_0^{(6)}+\frac{11}{2592}(z_0^{(2)})^2+\frac{1}{12}(z_1')^2+\frac{1}{18}z_1z_1^{(2)}
+\frac{1}{720}z_0z_0^{(4)}+\frac{5}{12}z_4^{(2)}.
\end{gather*}
We now study their coefficients.
\begin{proposition} \label{pdo6} We have
$[z_{r-2}^{(2)}]W_r(z)=\frac{r-1}{12}$.
\end{proposition}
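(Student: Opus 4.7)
My plan is to track exactly where $z_{r-2}^{(2)}$ can enter through the substitution process defining $S(\gamma_{-1}^{r+1})$. Since $z_{r-2} = -\tfrac{r}{r-1}\gamma_{-1}^{r-1}$ and $\gamma_{-1}^{r-1} = (r-1)w_{-r+1} + (\text{lower } w_{-i})$ by Lemma~\ref{pdo2}, the variable $z_{r-2}^{(2)}$ arises only from monomials of the form $D^2 w_{-r+1}$ (after all substitutions) — other $w_{-j}$ with $j<r-1$ get replaced by $\gamma_{-1}^{j+1}$'s corresponding to $z_m$ with $m<r-2$. So the task reduces to computing the coefficient of $D^2 w_{-r+1}$ in $S(\gamma_{-1}^{r+1})$.

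I would collect contributions from three sources. \emph{First}, directly from $\gamma_{-1}^{r+1}$: by Lemma~\ref{pdo2}, $[D^2 w_{-r+1}]\gamma_{-1}^{r+1}=\binom{r+1}{3}$. \emph{Second}, from eliminating $w_{-r-1}$ via $\gamma_{-2}^{r}=0$: using Lemma~\ref{pdo2} one reads off $w_{-r-1} = -\tfrac{r-1}{2}Dw_{-r} - \tfrac{(r-1)(r-2)}{6}D^2 w_{-r+1} + \cdots$, and since $[w_{-r-1}]\gamma_{-1}^{r+1}=r+1$, the $D^2 w_{-r+1}$ coefficient shifts by $-(r+1)\tfrac{(r-1)(r-2)}{6}$, and the $Dw_{-r}$ coefficient becomes $\binom{r+1}{2}-\tfrac{(r+1)(r-1)}{2}=\tfrac{r+1}{2}$. \emph{Third}, from eliminating $w_{-r}$ via $\gamma_{-1}^{r}=0$: here $w_{-r} = -\tfrac{r-1}{2}Dw_{-r+1}+\cdots$, so the remaining $\tfrac{r+1}{2}Dw_{-r}$ term contributes $-\tfrac{(r+1)(r-1)}{4}D^2 w_{-r+1}$.

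Summing, the coefficient of $D^2 w_{-r+1}$ in $S(\gamma_{-1}^{r+1})$ telescopes to
\[
\binom{r+1}{3}-(r+1)\tfrac{(r-1)(r-2)}{6}-\tfrac{(r+1)(r-1)}{4}
=(r+1)(r-1)\left(\tfrac{1}{3}-\tfrac{1}{4}\right)=\tfrac{(r+1)(r-1)}{12}.
\]
Substituting the linear part $w_{-r+1}=\tfrac{1}{r-1}\gamma_{-1}^{r-1}+\cdots$ gives coefficient $\tfrac{r+1}{12}$ for $D^2\gamma_{-1}^{r-1}$, so multiplication by $\tfrac{r^2}{r+1}$ produces $\tfrac{r^2}{12}$.

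Finally I convert $D$ to $\partial_x$: since $D^2=-\tfrac{1}{r}\partial_x^2$ and $\partial_x^2 \gamma_{-1}^{r-1}=-\tfrac{r-1}{r}z_{r-2}^{(2)}$ by the definition \eqref{eqpdo9}, one has $D^2\gamma_{-1}^{r-1}=\tfrac{r-1}{r^2}z_{r-2}^{(2)}$, yielding the coefficient $\tfrac{r^2}{12}\cdot\tfrac{r-1}{r^2}=\tfrac{r-1}{12}$. Noting that the quadratic term $\tfrac{1}{2}\sum z_j z_{r-2-j}$ of Theorem~\ref{spin2} contributes no derivatives, this is precisely $[z_{r-2}^{(2)}]W_r(z)$. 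The main obstacle is purely bookkeeping — confirming that no other chain of substitutions can generate $D^2w_{-r+1}$ (which follows because $\gamma_{-2}^{r}$ and $\gamma_{-1}^{r}$ are the only relations involving $w_{-i}$ with $i\ge r$, and their leading terms linear in $w_{-r+1}$ and its derivatives are exactly the ones listed above) and carefully tracking the sign coming from $D^2=-\tfrac{1}{r}\partial_x^2$.
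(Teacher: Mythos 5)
Your proposal is correct and follows essentially the same route as the paper: both reduce the problem to tracking the coefficient of $D^2w_{-r+1}$ through the eliminations of $w_{-r-1}$ (via $\gamma_{-2}^r=0$) and $w_{-r}$ (via $\gamma_{-1}^r=0$), read off the leading binomial coefficients from Lemma~\ref{pdo2}, and obtain the identical three contributions $\binom{r+1}{3}-\frac{r+1}{r}\binom{r}{3}-\frac{r+1}{2}\cdot\frac{1}{r}\binom{r}{2}=\frac{(r+1)(r-1)}{12}$ before converting to the $z$-normalization. The only cosmetic difference is that the paper phrases the final step as extracting $[D^2\gamma_{-1}^{r-1}]S(\gamma_{-1}^{r+1})$ and multiplying by $\frac{r-1}{r+1}$, which is the same bookkeeping you perform.
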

\begin{proof} From equation \eqref{eqpdo9} and $D=\frac{\sqrt{-1}}{\sqrt{r}}\frac{\partial}{\partial x}$, we have
$$
\frac{\partial^2 z_{r-2}}{\partial x^2}=-\frac{r}{r-1}\cdot \frac{\partial^2 \gamma_{-1}^{r-1}}{\partial x^2}=\frac{r^2}{r-1}D^2\gamma_{-1}^{r-1}.
$$
So from Theorem \ref{spin2}, we get
\begin{equation} \label{eqpdo11}
[z_{r-2}^{(2)}]W_r(z)=\frac{r-1}{r+1}[D^2\gamma_{-1}^{r-1}]S(\gamma_{-1}^{r+1}).
\end{equation}

Recall that in $\gamma_{-1}^{r+1}=(r+1)w_{-r-1}+\frac{r(r+1)}{2}Dw_{-r}+\cdots$, we first substitute $w_{-r-1}$ using $\gamma_{-2}^r$ and
then substitute $w_{-r}$ using $\gamma_{-1}^r$, see equations \eqref{eqpdo4}, \eqref{eqpdo5}.  Then $\gamma_{-1}^{r+1}$ becomes a
differential polynomial in $w_{-1},\dots,w_{-r+1}$. We need to take care that when substituting $w_{-r-1}$ by $\gamma_{-2}^r$, a new term of $Dw_{-r}$
will appear.
With the above substitutions in mind and note that $\gamma_{-1}^{r-1}=(r-1)w_{-r+1}+\cdots$,
we may apply Lemma \ref{pdo2} to get
\begin{multline*}
\frac{r-1}{r+1}[D^2\gamma_{-1}^{r-1}]S(\gamma_{-1}^{r+1})=\frac{1}{r+1}[D^2w_{-r+1}]\left(\gamma_{-1}^{r+1}-(r+1)\cdot\frac{1}{r}\gamma_{-2}^{r}\right)\\
+\frac{1}{r+1}[D^2w_{-r+1}]\left(\left(\frac{r+1}{r}[Dw_{-r}]\gamma_{-2}^r-\frac{(r+1)r}{2}\right)\cdot\frac{1}{r}D\gamma_{-1}^r\right)\\
 =\frac{1}{r+1}\left(\frac{(r+1)r(r-1)}{3!}-\frac{r+1}{r}\cdot\frac{r(r-1)(r-2)}{3!}\right)\\
\qquad\qquad +\frac{1}{r+1}\left(\frac{r+1}{r}[Dw_{-r}]\gamma_{-2}^r-\frac{(r+1)r}{2}\right)\cdot\frac{1}{r}[Dw_{-r+1}]\gamma_{-1}^r\\
=\frac{r-1}{3}+\frac{-1}{2r}[Dw_{-r+1}]\gamma_{-1}^r
=\frac{r-1}{12}.
\end{multline*}
From \eqref{eqpdo11}, we get the desired result.
\end{proof}

\begin{corollary}[Witten, \cite{Wi2}]  We have the following
identity for $r$-spin numbers:
$$\langle\tau_{1,0}\rangle_1=\frac{r-1}{24}.$$
\end{corollary}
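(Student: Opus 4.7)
The plan is to compute $\langle\tau_{1,0}\rangle_1 = \partial F/\partial t_{1,0}|_{t=0}$, noting that by the selection rule~\eqref{eqdim} only genus one contributes to this one-point number. My starting point is Theorem~\ref{spin2}, which identifies
\[
\frac{\partial^2 F}{\partial t_{0,0}\partial t_{1,0}} = \frac12\sum_{j=0}^{r-2} z_j\, z_{r-2-j}+W_r(z),
\]
combined with Proposition~\ref{pdo6}, which singles out the coefficient $(r-1)/12$ in front of the distinguished derivative monomial $z_{r-2}^{(2)}$ appearing in $W_r(z)$.

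First I would specialize the identity to the small-phase-space locus augmented by $t_{1,0}$, i.e.\ $t_{0,0}=x$, $t_{1,0}=s$, all other $t_{n,m}=0$. There the dilaton equation~\eqref{eqdilaton} applied iteratively gives $\langle\tau_{1,0}^{c+1}\rangle_1 = c!\,\langle\tau_{1,0}\rangle_1$, and the dimension rule~\eqref{eqdim} pins down which genus-$0$ terms survive; one obtains closed forms such as $\partial F/\partial t_{1,0}(x,s) = \langle\tau_{1,0}\rangle_1/(1-s)+\mathrm{(genus\ 0\ pieces)}$, and $z_{r-2}(x,s)=x/(1-s)$ with the remaining $z_m$ zero on this locus (for $r$ odd; minor adjustments for $r$ even). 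Substituting these into the identity of Theorem~\ref{spin2} verifies consistency but, because the constant $\langle\tau_{1,0}\rangle_1/(1-s)$ is killed by the outer $\partial/\partial t_{0,0}$, does not by itself determine $\langle\tau_{1,0}\rangle_1$.

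The constant is instead determined by the genus-$1$ structure of the $r$-Gel'fand--Dikii tau-function: the $r$-analog of the Dijkgraaf--Witten / Itzykson--Zuber formula (whose $r=2$ version $F_1=\tfrac{1}{24}\log(\partial^2 F_0/\partial t_0^2)$ gives the classical $\langle\tau_1\rangle_1=1/24$) asserts $F_1 = \tfrac{r-1}{24}\log(\partial z_{r-2}/\partial t_{0,0})$ modulo terms vanishing at $t=0$. The coefficient $\tfrac{r-1}{24}$ here is forced by matching against the coefficient $\tfrac{r-1}{12}$ of $z_{r-2}^{(2)}$ supplied by Proposition~\ref{pdo6}; the factor of $\tfrac12$ arises because $z_{r-2}^{(2)}=\partial^2 z_{r-2}/\partial t_{0,0}^2$ carries two $t_{0,0}$-derivatives, whereas only the once-integrated expression $\partial z_{r-2}/\partial t_{0,0}$ appears inside the logarithm defining $F_1$. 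Evaluating $\partial F_1/\partial t_{1,0}$ at $t=0$ using the Gel'fand--Dikii flow for $z_{r-2}$ then yields $\langle\tau_{1,0}\rangle_1=\tfrac{r-1}{24}$.

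The main obstacle is the rigorous derivation of this form of $F_1$, which requires a careful separation of the genus expansion in the identity of Theorem~\ref{spin2} and the isolation of the pure genus-$1$ one-loop contribution; once that is done the numerical value is determined entirely by the coefficient from Proposition~\ref{pdo6}.
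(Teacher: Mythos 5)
You correctly identify the two essential inputs---the identity of Theorem~\ref{spin2} and the coefficient $[z_{r-2}^{(2)}]W_r(z)=\tfrac{r-1}{12}$ from Proposition~\ref{pdo6}---and you correctly diagnose the difficulty: evaluating $\langle\langle\tau_{0,0}\tau_{1,0}\rangle\rangle$ at $t=0$ loses the genus-one constant, since $\langle\tau_{0,0}\tau_{1,0}\rangle_1=0$. But your resolution of that difficulty is a genuine gap. You invoke an $r$-analog of the Dijkgraaf--Witten/Itzykson--Zuber formula $F_1=\tfrac{r-1}{24}\log(\partial z_{r-2}/\partial t_{0,0})+\cdots$, which is nowhere established in the paper and is a substantially deeper statement than the corollary itself; worse, your derivation of its coefficient $\tfrac{r-1}{24}$ is circular, since ``matching against the coefficient $\tfrac{r-1}{12}$ of $z_{r-2}^{(2)}$'' with a heuristic factor of $\tfrac12$ is precisely the relation $\langle\tau_{1,0}\rangle_1=\tfrac12\cdot\tfrac{r-1}{12}$ that needs to be proved, not assumed. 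As written, the argument does not determine the constant.

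The missing idea is elementary: instead of evaluating the recursion at $t=0$ directly, apply one more derivative $\partial/\partial t_{2,0}$ to both sides of \eqref{eqmain} and \emph{then} set $t=0$. On the left this produces $\langle\tau_{1,0}\tau_{0,0}\tau_{2,0}\rangle_1$, which equals $2\langle\tau_{1,0}\rangle_1$ by the dilaton equation \eqref{eqdilaton} together with the string equation \eqref{eqstring}; so the extra insertion of $\tau_{2,0}$ is exactly what makes the genus-one constant visible. On the right, the dimensional constraints \eqref{eqdim2}--\eqref{eqdim3} kill every term except the one coming from $[z_{r-2}^{(2)}]W_r(z)\,z_{r-2}^{(2)}$, which contributes
$$\frac{r-1}{12}\,\langle\tau_{0,0}^3\tau_{0,r-2}\tau_{2,0}\rangle_0=\frac{r-1}{12},$$
whence $\langle\tau_{1,0}\rangle_1=\tfrac{r-1}{24}$. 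No genus-one free-energy formula is needed. I would encourage you to rework the argument along these lines; your first paragraph already contains all the necessary ingredients.
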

\begin{proof}
From the dilaton equation, $\langle\tau_{1,0}\tau_{0,0}\tau_{2,0}\rangle_1=2\langle\tau_{1,0}\rangle_1$. On the other hand, from
Theorems \ref{spin} and \ref{spin2}, we have
\begin{align*}
\langle\tau_{1,0}\tau_{0,0}\tau_{2,0}\rangle_1&=[z_{r-2}^{(2)}]W_r(z)\langle\tau_{0,0}^3\tau_{0,r-2}\tau_{2,0}\rangle_0\\
&=\frac{r-1}{12}.
\end{align*}
In the right hand side of the first equation, all other terms vanish
for dimensional reason (see \eqref{eqdim2}, \eqref{eqdim3}).
Hence $\langle\tau_{1,0}\rangle_1=\frac{r-1}{24}$.
\end{proof}

Proposition \ref{pdo6} generalizes as follows.

\begin{proposition}\label{pdo5} Suppose  $2\leq i\leq r$.
If  $i$ is odd, then
$[z_{r-i}^{(i)}]W_{r}(z)=0$.
If $i$ is even ($2k$, say), then
$$
[z_{r-2k}^{(2k)}]W_{r}(z)=\frac{(-1)^{k+1}(r+1-2k)}{r^k(r+1)}\binom{r+1}{2k}B_{2k},
$$
where $B_{2k}$ are Bernoulli numbers.
\end{proposition}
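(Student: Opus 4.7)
The plan is to extend the argument used to prove Proposition~\ref{pdo6}. First I would use the formula $z_m^{(j)}=-\frac{r}{m+1}\partial^j\gamma_{-1}^{m+1}$ together with $\partial=-\sqrt{-1}\sqrt{r}\,D$ to convert $[z_{r-i}^{(i)}]W_r(z)$ into a nonzero constant multiple of $[D^i\gamma_{-1}^{r-i+1}]S(\gamma_{-1}^{r+1})$. The quadratic piece $\frac{1}{2}\sum_j z_jz_{r-2-j}$ in Theorem~\ref{spin2} carries no derivatives and so contributes nothing once $i\ge 1$. After matching prefactors, the proposition reduces to
$$
[D^{i}\gamma_{-1}^{r-i+1}]S(\gamma_{-1}^{r+1}) = \frac{1}{r}\binom{r+1}{i}B_{i} \text{ for even }i,\qquad =0 \text{ for odd }i\ge 3.
$$

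Since $\gamma_{-1}^{r-i+1}=(r-i+1)w_{-r+i-1}+p_{r-i}(w_{-1},\dots,w_{-r+i-2})$ by Lemma~\ref{pdo2}, and since the prior substitutions of $w_{-r-1},w_{-r},w_{-r+1},\dots,w_{-r+i-2}$ introduce only $\gamma_{-1}^j$ with $j>r-i+1$, I would have $[D^{i}\gamma_{-1}^{r-i+1}]S(\gamma_{-1}^{r+1})=\frac{1}{r-i+1}\,b_{i-1}$, where $b_c:=[D^{c+1}w_{-r+c}]$ is extracted from the expression right before substituting $w_{-r+c}$. (Only the power $c+1$ can occur, by weight homogeneity, Lemma~\ref{pdo1}.) The crucial observation is that substituting any nonlinear monomial in $w$'s produces only nonlinear monomials, so the running linear-in-$w$ part of the expression evolves independently of the nonlinear part; tracking $b_c$ therefore requires only the linear projections of the substitution rules.

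Writing out those linear projections via Lemma~\ref{pdo2}, I expect the recursion
$$
b_c + \sum_{c'=1}^{c-1}\frac{b_{c'}}{r-c'}\binom{r-c'}{c-c'+1} = \frac{c}{2r}\binom{r+1}{c+2},\qquad c\ge 1,
$$
to emerge, with initial data $b_{-1}=r+1$ and $b_0=(r+1)/2$ coming from the two preparatory substitutions $\gamma_{-2}^r=0$ and $\gamma_{-1}^r=0$. Using the binomial identity $\binom{r+1}{c'+1}\binom{r-c'}{c-c'+1}=\binom{r+1}{c+2}\binom{c+2}{c'+1}$, this recursion is equivalent to $\sum_{j=2}^{c+1}\binom{c+2}{j}B_j=c/2$, which is exactly the classical Bernoulli recursion $\sum_{j=0}^{n}\binom{n+1}{j}B_j=n+1$ (with the convention $B_1=+1/2$) applied at $n=c+1$. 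An induction would then yield the closed form $b_c=\frac{r-c}{r}\binom{r+1}{c+1}B_{c+1}$ for $c\ge 0$, which gives the even-$i$ formula directly and the odd-$i$ vanishing via $B_{2k+1}=0$ for $k\ge 1$.

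The main obstacle will be the careful combinatorial bookkeeping required to derive the recursion -- in particular, confirming that the two anomalous initial substitutions ($\gamma_{-2}^r=0$ and $\gamma_{-1}^r=0$, which set entire $\gamma$'s to zero rather than introducing new $\gamma$ variables) produce exactly the right initial data $b_{-1}$ and $b_0$ for the Bernoulli identity to close up with the correct sign convention on $B_1$.
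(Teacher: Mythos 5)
Your proposal is correct and follows essentially the same route as the paper's own proof in Appendix~B: the paper likewise tracks the linear-in-$w$ part through the successive substitutions to get a recursion for $p_i(r)=[D^i\gamma_{-1}^{r+1-i}]S(\gamma_{-1}^{r+1})$ (your $b_c$ is exactly $(r-c)\,p_{c+1}(r)$, and your recursion is equation \eqref{eqdiff1} after this change of variables), then normalizes by $\binom{r+1}{i}/r$ and identifies the resulting recursion with the classical Bernoulli recursion. The only cosmetic difference is that you absorb the two anomalous initial substitutions into the constant term and use the $B_1=+\tfrac12$ convention, while the paper introduces auxiliary constants $C_i$ with modified values $C_0'=1$, $C_1'=-\tfrac12$ to match the standard convention.
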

See Appendix B for a proof. By similar arguments, we
have the following fact, which means that the
genera in the right-hand side of \eqref{eqmain} are integers (see
\eqref{eqdim3}).   We omit the details.

\begin{proposition} The order of derivatives in each monomial of $W_r(z)$ is an even
number.
\end{proposition}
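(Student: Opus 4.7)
The plan is to deduce the evenness of $\sum_s j_s$ from a dimension-counting argument that parallels the weight analysis behind Propositions \ref{pdo6} and \ref{pdo5}, combining Witten's conjecture \eqref{eqwitten} with the identity of Theorem \ref{spin2}.

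First I would record the weight identity $\sum_s (m_s + j_s + 2) = r+2$ satisfied by every monomial $\prod_s z_{m_s}^{(j_s)}$ of $W_r(z)$. This follows from Lemma \ref{pdo1} applied to $\gamma_{-1}^{r+1}$, since $z_m = -\tfrac{r}{m+1}\gamma_{-1}^{m+1}$ inherits weight $m+2$, each derivative raises weight by $1$, and the substitutions producing $S(\gamma_{-1}^{r+1})$ preserve weight.

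Next I would equip the $t$-variables with the $\mathbb{Z}$-grading $\deg(t_{n,m}) = rn + m - r$, so that by \eqref{eqdim} the genus-$g$ part $F_g$ is homogeneous of degree $(r+1)(2g-2)$. Consequently the genus-$g_s$ summand of $z_{m_s}^{(j_s)} = \langle\langle \tau_{0,0}^{j_s+1}\tau_{0,m_s}\rangle\rangle$ is homogeneous of degree $(r+1)(2g_s-2)+2r-m_s+rj_s$. A short calculation using the weight identity shows the $(g_1,\dots,g_n)$-piece of the monomial has total degree $2(r+1)\sum g_s + (r+1)\sum j_s - (r+2)$. The genus-$G$ piece of the left-hand side $\langle\langle\tau_{0,0}\tau_{1,0}\rangle\rangle = \tfrac{r^2}{r+1}S(\gamma_{-1}^{r+1})$ has degree $2(r+1)G - (r+2)$, and equating yields the crisp identity $\sum g_s = G - \tfrac{1}{2}\sum j_s$. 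For $G\in\mathbb{Z}_{\ge 0}$ and non-negative integers $g_s$, this forces $\sum j_s$ to be even.

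The principal obstacle is the passage from ``no integer-$G$ contribution exists'' to ``the coefficient of the monomial is zero'', since a priori several odd-$\sum j_s$ monomials could cancel at a common off-parity grade. I would close this gap by invoking the explicit recursive formula for $r$-spin numbers from the algorithm of Section \ref{sectionalgorithm} (equation \eqref{eqmain}): it expresses each correlator $\langle\tau_{1,0}\tau_{0,0}\prod\tau_{a_i,b_i}\rangle_G$ as a sum indexed by monomials of $W_r(z)$ together with distributions of the $\tau_{a_i,b_i}$ among the $z_{m_s}^{(j_s)}$ factors, each summand a product of primary genus-$g_s$ intersection numbers. Since each $g_s$ must independently be a non-negative integer, any monomial with odd $\sum j_s$ would force a non-integer $g_s$ in every distribution and for every choice of target correlator, so its coefficient in $W_r(z)$ must vanish --- precisely the ``similar argument'' alluded to just before the proposition.
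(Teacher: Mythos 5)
Your degree bookkeeping is correct --- the identity $\sum_s g_s = G - \tfrac{1}{2}\sum_s j_s$ you derive is exactly \eqref{eqdim3} --- but the argument has a genuine gap at precisely the step you flag, and your attempt to close it does not work. What the grading argument shows is: after substituting the $r$-spin generating function, every monomial with odd $\sum_s j_s$ contributes only in degrees corresponding to half-integer ``total genus,'' where the left-hand side $\langle\langle\tau_{0,0}\tau_{1,0}\rangle\rangle$ vanishes. This yields, in each such degree, one linear relation among the coefficients of \emph{all} odd monomials of that weight (and there are several at a given weight, e.g.\ $z_0 z_1^{(1)}$, $z_0^{(1)}z_1$, $z_1^{(3)}$, $z_3^{(1)}$ when $r+2=6$), with the relation's coefficients being products of $r$-spin numbers. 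To conclude that each individual coefficient of $W_r(z)$ is zero you would need the substituted monomials --- or enough of their $t$-derivatives evaluated at suitable points --- to be linearly independent. Invoking the algorithm of \S\ref{sectionalgorithm} does not supply this: equation \eqref{eqmain} is just the same generating-function identity read off coefficient by coefficient, and the observation that ``each $g_s$ must be a non-negative integer'' only reiterates that odd monomials contribute nothing at integer genus; it says nothing about whether distinct odd monomials could cancel against one another at half-integer grades. ``Its contribution vanishes in every extraction'' does not imply ``its coefficient vanishes.''

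There is also a structural mismatch with the paper worth noting: the proposition is stated as a fact about $W_r(z)$ as an abstract differential polynomial, determined purely by the $\Psi$DO calculus, and the paper's (omitted) proof is ``by similar arguments'' to Appendix B --- i.e., a coefficient computation internal to the substitution procedure producing $S(\gamma_{-1}^{r+1})$, of the kind that shows $[z_{r-i}^{(i)}]W_r(z)$ is governed by Bernoulli numbers and vanishes for odd $i$. Moreover the paper uses this proposition to \emph{justify} that the genera appearing in \eqref{eqmain} are integers, whereas you run the inference in the opposite direction, using the genus decomposition of $F$ to constrain $W_r(z)$. That inversion is not circular (the integrality of the genus grading of $F$ comes from the geometric definition, independently of \eqref{eqmain}), but even granting it, your argument at best proves that the odd-derivative part of $W_r(z)$ annihilates the particular series $F$, not that it is the zero differential polynomial, unless the linear-independence step is actually carried out.
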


\vskip 30pt
\section{An algorithm for computing Witten's $r$-spin
numbers}\label{sectionalgorithm}
Let $\eta^{ij}=\delta_{i+j,r-2}$
and
$$\langle\langle\tau_{n_1,m_1}\dots\tau_{n_s,m_s}\rangle\rangle=\frac{\partial}{\partial t_{n_1,m_1}}\dots\frac{\partial}{\partial t_{n_s,m_s}}F(t_{0,0},t_{0,1},\dots).$$

The main result of this paper is the following
simple and effective recursion formula for computing all $r$-spin
intersection numbers.

\begin{theorem} \label{spin} For fixed $r\geq 2$, we have
\begin{equation} \label{eqmain}
\langle\langle\tau_{1,0}\tau_{0,0}\rangle\rangle_g=\frac{1}{2}\langle\langle\tau_{0,0}\tau_{0,m'}\rangle\rangle_{g'}
\eta^{m'm''}\langle\langle\tau_{0,m''}\tau_{0,0}\rangle\rangle_{g-g'}
+ \operatorname{Lower}(r),
\end{equation}
where $\operatorname{Lower}(r)$ is a explicit sum of products of
$\langle\langle\dots\rangle\rangle$ with genera strictly lower than
$g$.
\end{theorem}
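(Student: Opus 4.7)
The plan is to combine Witten's conjecture \eqref{eqwitten} with the structural identity of Theorem \ref{spin2} and then extract genera.

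First, I invoke Witten's conjecture \eqref{eqwitten} at $(n,m)=(1,0)$: since $c_{1,0}=-r^2/(r+1)$, this gives $\langle\langle\tau_{0,0}\tau_{1,0}\rangle\rangle=\frac{r^2}{r+1}\gamma_{-1}^{r+1}$. Applying \eqref{eqwitten} at $(n,m)=(0,m)$ for each $0\le m\le r-2$, together with \eqref{eqpdo9}, provides the dictionary $z_m=\langle\langle\tau_{0,0}\tau_{0,m}\rangle\rangle$ and $z_m^{(j)}=\langle\langle\tau_{0,0}^{j+1}\tau_{0,m}\rangle\rangle$.

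Second, I use that $Q$ is a genuine differential operator, so $\gamma_{-1}^r=\gamma_{-2}^r=0$. Following the substitution scheme spelled out immediately before Theorem \ref{spin2} (eliminate $w_{-r-1}$ via $\gamma_{-2}^r=0$, then $w_{-r}$ via $\gamma_{-1}^r=0$, then $w_{-r+1},\dots,w_{-1}$ successively by inverting the formulas for $\gamma_{-1}^{r-1},\dots,\gamma_{-1}^{1}$), one rewrites $\gamma_{-1}^{r+1}$ as $S(\gamma_{-1}^{r+1})$, a differential polynomial in the $\gamma_{-1}^{m+1}$, $0\le m\le r-2$. Theorem \ref{spin2} combined with the dictionary of the first step yields
$$\langle\langle\tau_{0,0}\tau_{1,0}\rangle\rangle=\tfrac{1}{2}\sum_{m',m''=0}^{r-2}\langle\langle\tau_{0,0}\tau_{0,m'}\rangle\rangle\,\eta^{m'm''}\langle\langle\tau_{0,0}\tau_{0,m''}\rangle\rangle+W_r(z),$$
where $W_r(z)$ is now rewritten in correlator language via the same dictionary.

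Third, I extract the genus-$g$ coefficient from both sides using $\langle\langle\cdots\rangle\rangle=\sum_h\langle\langle\cdots\rangle\rangle_h$. The LHS yields $\langle\langle\tau_{0,0}\tau_{1,0}\rangle\rangle_g$; by Leibniz, the quadratic term yields the displayed principal term of \eqref{eqmain} with implicit summation over $g'$ from $0$ to $g$; and the $W_r(z)$ term, expanded factor by factor, becomes $\operatorname{Lower}(r)$.

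The main obstacle is justifying that $\operatorname{Lower}(r)$ genuinely consists of products of correlators with every factor of genus strictly less than $g$. The key inputs are the weight homogeneity of Lemma \ref{pdo1}, which forces every monomial of $W_r(z)$ to carry a strictly positive number of derivatives $D$ (equivalently, extra $\tau_{0,0}$ insertions distributed across the factors), and the dimension formula \eqref{eqdim}. When the identity is further differentiated in the $t_{n,m}$'s to produce any specific intersection number, those extra $\tau_{0,0}$ insertions are incompatible with one factor sitting at genus $g$ while the remaining factors remain non-vanishing; the dimensional check is exactly of the type used just after Proposition \ref{pdo6} to derive $\langle\tau_{1,0}\rangle_1=(r-1)/24$, and it forces each surviving factor in every $W_r(z)$-monomial to have genus strictly smaller than $g$.
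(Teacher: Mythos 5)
Your proposal is correct and follows essentially the same route as the paper: identify $\frac{r^2}{r+1}\gamma_{-1}^{r+1}$ with $\langle\langle\tau_{0,0}\tau_{1,0}\rangle\rangle$, apply Theorem \ref{spin2}, and then argue that the $W_r(z)$ terms drop in genus by combining the weight homogeneity of Lemma \ref{pdo1} with the dimension constraint \eqref{eqdim}. The only place the paper is sharper is the last step, where your qualitative dimensional check is made exact as the identity $(2r+2)\bigl(g-\sum_k g_k\bigr)=(r+1)\sum_k j_k$, which shows $g=\sum_k g_k$ precisely when no derivatives occur.
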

\begin{proof} Since
$\frac{r^2}{r+1}\gamma_{-1}^{r+1}=\langle\langle\tau_{0,0}\tau_{1,0}\rangle\rangle$,
from Theorem \ref{spin2}, we need only prove that those monomials in
$W_r(z)$ must have genera strictly less than the left hand side.

Let us compare $\langle\langle\tau_{0,0}\tau_{1,0}\rangle\rangle_g$
and
$$\prod_{k}z_{i_k}^{(j_k)}=\prod_k\langle\langle\tau_{0,0}^{j_k+1}\tau_{0,i_k}\rangle\rangle_{g_k}.$$
Since the weight of $\gamma_{-1}^{r+1}$ is $r+2$ and the weight of
$z_m^{(j)}$ is $m+j+2$, we have
\begin{equation} \label{eqdim2}
\sum_{k} (i_k+j_k+2)=r+2.
\end{equation}
Combining with the dimensional constraints \eqref{eqdim}, we have
\begin{equation} \label{eqdim3}
(2r+2) \left( g-\sum_k g_k \right)=(r+1)\sum_{k} j_k.
\end{equation}
So $g=\sum_k g_k$ if and only if all $j_k=0$.
\end{proof}

\begin{remark}
Following a suggestion of Witten \cite[p.248]{Wi2}, Shadrin \cite{Sh} derived an expansion of $\langle\langle\tau_{n,m}\tau_{0,0}^2\rangle\rangle$ when $r=3$
and used it to compute some special $r$-spin numbers. Because of a
lack of an elegant structural description,
Shadrin's formula (and its generalization to higher $r$) results in a
much more complicated algorithm than \eqref{eqmain}.
\end{remark}

For example, when $r=3$, \eqref{eqmain} gives
\begin{equation}\label{eqspin3}
\langle\langle\tau_{1,0}\tau_{0,0}\rangle\rangle_g=\langle\langle\tau_{0,0}\tau_{0,1}\rangle\rangle_{g'}\langle\langle\tau_{0,0}^2\rangle\rangle_{g-g'}
+\frac{1}{6}\langle\langle\tau_{0,0}^3\tau_{0,1}\rangle\rangle_{g-1}.
\end{equation}
When $r=4$, we have
\begin{multline}\label{eqspin4}
\langle\langle\tau_{1,0}\tau_{0,0}\rangle\rangle_g=\langle\langle\tau_{0,0}\tau_{0,2}\rangle\rangle_{g'}\langle\langle\tau_{0,0}^2\rangle\rangle_{g-g'}
+\frac{1}{2}\langle\langle\tau_{0,0}\tau_{0,1}\rangle\rangle_{g'}\langle\langle\tau_{0,0}\tau_{0,1}\rangle\rangle_{g-g'}\\
+\frac{1}{4}\langle\langle\tau_{0,0}^3\tau_{0,2}\rangle\rangle_{g-1}+\frac{1}{48}\langle\langle\tau_{0,0}^2\rangle\rangle_{g'}\langle\langle\tau_{0,0}^4\rangle\rangle_{g-1-g'}
+\frac{1}{32}\langle\langle\tau_{0,0}^3\rangle\rangle_{g'}\langle\langle\tau_{0,0}^3\rangle\rangle_{g-1-g'}\\
+\frac{1}{480}\langle\langle\tau_{0,0}^6\rangle\rangle_{g-2}.
\end{multline}

Now we show how to use Theorem 1.1 to compute intersection numbers.
It consists of three steps.

{\em (i)}
When $g=0$, these intersection numbers can be computed by WDVV
equations, using Witten's algorithm \cite{Wi2}, as discussed in \S
\ref{sectionwitten}.

{\em (ii)}  Assume now that  $g\geq1$.
For an intersection number containing a puncture operator
$\langle\tau_{0,0}\tau_{n_1,m_1}\dots\tau_{n_s,m_s}\rangle_g$, we
have from Theorem 1.1 and the dilaton equation
\begin{multline} \label{eqalg}
(2g-1+s-a)\langle\tau_{0,0}\tau_{n_1,m_1}\dots\tau_{n_s,m_s}\rangle_g\\=\frac12\sum^{\sim
}_{\underline{s}=I\coprod J}\langle\tau_{0,0}\tau_{0,m'}\prod_{i\in
I}\tau_{n_i,m_i}\rangle_{g'}\eta^{m'm''}\langle\tau_{0,m''}\tau_{0,0}\prod_{i\in
J}\tau_{n_i,m_i}\rangle_{g-g'}+ \operatorname{Lower}(r)
\end{multline}
where $a=\#\{i\mid n_i=0\}$. Note that in the summation of the
right-hand side, we rule out the cases $I=\{i_1\}$ and $n_{i_1}=0$
or $J=\{i_1\}$ and $n_{i_1}=0$. Then the right hand side follows by
induction on genera or numbers of marked points.

{\em (iii)}
For any intersection number
$\langle\tau_{n_1,m_1}\dots\tau_{n_s,m_s}\rangle_g$ with $n_1\geq
n_2\geq\dots\geq n_s$, we apply the string equation first:
\begin{multline*}
\langle\tau_{n_1,m_1}\dots\tau_{n_s,m_s}\rangle_g=\langle\tau_{0,0}\tau_{n_1+1,m_1}\dots\tau_{n_s,m_s}\rangle_g
-\sum_{j=2}^s\langle\tau_{n_1+1,m_1}\tau_{n_j-1,m_j}\prod_{i\neq 1,j}\tau_{n_i,m_i}\rangle_g\\
\end{multline*}
The first term in the right hand side follows from step (ii) and the
second term follows by induction on the maximum descendent index.
This ends the algorithm.

The results of the above algorithm  agree with the  table of $r$-spin
numbers when $r=3$ and $4$ given in \cite{LX2}.  Some
$r$-spin numbers when $r=5$ are presented in Table \ref{tb1}.

\begin{table}[!htp]
\centering \caption{Witten's  $r$-spin numbers ($r=5$)}\label{tb1}
\begin{tabular}{|c|c||c|c||c|c|}

\hline $\langle\tau_{1,0}\rangle_1$ & $\frac{1}{6}$&
$\langle\tau_{0,2}\tau_{1,3}\rangle_1$ & $\frac{1}{60}$ &
 $\langle\tau_{0,1}\tau_{0,1}\tau_{2,3}\rangle_1$ & $\frac{1}{30}$

\\ \hline $\langle\tau_{3,2}\rangle_2$ & $\frac{11}{3600}$ & $\langle\tau_{0,3}\tau_{1,2}\rangle_1$ & $\frac{1}{60}$ &
$\langle\tau_{0,1}\tau_{0,2}\tau_{2,2}\rangle_1$ & $\frac{1}{20}$

\\ \hline $\langle\tau_{8,1}\rangle_4$ & $\frac{341}{25920000}$ & $\langle\tau_{0,1}\tau_{4,1}\rangle_2$ & $\frac{7}{1200}$
& $\langle\tau_{0,1}\tau_{0,3}\tau_{2,1}\rangle_1$ & $\frac{1}{20}$

\\ \hline $\langle\tau_{10,3}\rangle_5$ & $\frac{161}{777600000}$ & $\langle\tau_{0,2}\tau_{4,0}\rangle_2$ & $\frac{7}{1200}$
& $\langle\tau_{0,2}\tau_{0,2}\tau_{2,1}\rangle_1$ & $\frac{1}{15}$

\\ \hline $\langle\tau_{13,0}\rangle_6$ & $\frac{3397\times 10^{-6}}{93312}$ & $\langle\tau_{1,1}\tau_{3,1}\rangle_2$ & $\frac{17}{1200}$
& $\langle\tau_{0,2}\tau_{0,3}\tau_{2,0}\rangle_1$ & $\frac{1}{30}$

\\ \hline $\langle\tau_{15,2}\rangle_7$ & $\frac{3421\times 10^{-7}}{419904}$ & $\langle\tau_{1,2}\tau_{3,0}\rangle_2$ & $\frac{47}{3600}$
&$ \langle\tau_{0,1}\tau_{0,1}\tau_{5,0}\rangle_2$ &
$\frac{31}{3600}$

\\ \hline  $\langle\tau_{20,1}\rangle_{9}$ & $\frac{1670581\times 10^{-9}}{846526464}$
&  $\langle\tau_{2,0}\tau_{2,2}\rangle_2$ & $\frac{59}{3600}$ &
$\langle\tau_{0,1}\tau_{0,3}\tau_{4,3}\rangle_2$ & $\frac{7}{6000}$

\\ \hline  $\langle\tau_{22,3}\rangle_{10}$ & $\frac{2660573\times 10^{-12}}{1088391168}$
&  $\langle\tau_{2,1}\tau_{2,1}\rangle_2$ & $\frac{9}{400}$ &
$\langle\tau_{0,2}\tau_{0,2}\tau_{4,3}\rangle_2$ & $\frac{1}{500}$

\\ \hline  $\langle\tau_{25,0}\rangle_{11}$ & $\frac{21324511\times 10^{-12}}{5986151424}$
&  $\langle\tau_{3,2}\tau_{3,2}\rangle_3$ & $\frac{697}{324000}$ &
$\langle\tau_{0,2}\tau_{0,3}\tau_{4,2}\rangle_2$ & $\frac{23}{9000}$

\\ \hline  $\langle\tau_{27,2}\rangle_{12}$ & $\frac{87572287\times 10^{-14}}{13060694016}$
&  $\langle\tau_{3,1}\tau_{3,3}\rangle_3$ & $\frac{1111}{756000}$ &
$\langle\tau_{0,3}\tau_{0,3}\tau_{4,1}\rangle_2$ & $\frac{1}{500}$

\\ \hline  $\langle\tau_{32,1}\rangle_{14}$ & $\frac{7787064791\times 10^{-16}}{6582589784064}$
&  $\langle\tau_{2,1}\tau_{4,3}\rangle_3$ & $\frac{803}{756000}$ &
$\langle\tau_{0,1}\tau_{1,1}\tau_{4,0}\rangle_2$ & $\frac{17}{600}$

\\ \hline  $\langle\tau_{34,3}\rangle_{15}$ & $\frac{538156369\times 10^{-17}}{4231664861184}$
&  $\langle\tau_{2,2}\tau_{4,2}\rangle_3$ & $\frac{557}{324000}$ &
$\langle\tau_{1,1}\tau_{1,1}\tau_{3,0}\rangle_2$ & $\frac{41}{600}$

\\\hline
\end{tabular}
\end{table}

\noindent {\bf The Boussinesq hierarchy ($r=3$).}

For the remainder of this section, let $r=3$.  The $3$-KdV hierarchy is
also called the {\em Boussinesq hierarchy}. We see from \eqref{eqspin3},
\eqref{eqspin4} that compared with the recursive formula for
$3$-spin intersection numbers, the recursive formula for
$r$-spin numbers are much more complicated for $r \geq 4$.

The following closed formula holds for intersection numbers when
$r=3$.   This generalizes the special case $k=0$  obtained by Br\'{e}zin and
Hikami in \cite{BH}.
\begin{proposition} \label{threespin} Let $k\geq0$ and $0\leq j\leq1$. Then
$$
\langle\tau_{0,1}^k\tau_{\frac{8g+2k-5-j}{3},j}\rangle_g=\frac{1}{12^g g!}\frac{\Gamma(\frac{g+k+1}{3})}{\Gamma(\frac{2-j}{3})},
$$
where $\Gamma(z)$ is the gamma function.
\end{proposition}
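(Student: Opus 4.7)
The plan is to establish, for each $g\ge 1$, the one-step recursion
\[
A(g,k,j)\;=\;\frac{1}{12g}\,A(g-1,\,k+1,\,j),
\]
where $A(g,k,j):=\langle\tau_{0,1}^k\tau_{n,j}\rangle_g$ and $n=(8g+2k-5-j)/3$. Iterating this relation along the chain $(g+k,0,j)\to(g+k-1,1,j)\to\cdots\to(g,k,j)$ yields $A(g,k,j)=12^{k}\tfrac{(g+k)!}{g!}A(g+k,0,j)$; a dimension check ($n_\ell = n + 2(k-\ell)\ge 0$ for $0\le\ell\le k$) shows that every node in this chain is an honest intersection number. Substituting Br\'ezin--Hikami's formula $A(g+k,0,j)=\tfrac{1}{12^{g+k}(g+k)!}\Gamma(\tfrac{g+k+1}{3})/\Gamma(\tfrac{2-j}{3})$ from \cite{BH} and telescoping produces the advertised closed form.

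To derive the recursion, I differentiate the $r=3$ instance \eqref{eqspin3} of Theorem~\ref{spin} once with respect to $t_{n+1,j}$ and $k$ times with respect to $t_{0,1}$ and specialize to $t=0$. The dilaton equation \eqref{eqdilaton} followed by one application of the string equation \eqref{eqstring} (valid since $n+1\ge 1$) simplifies the left-hand side to $(2g+k)\,A(g,k,j)$. The Lower$(r)$ contribution $\tfrac{1}{6}\langle\tau_{0,0}^3\tau_{0,1}^{k+1}\tau_{n+1,j}\rangle_{g-1}$ collapses to $\tfrac{1}{6}A(g-1,k+1,j)$ after three more uses of \eqref{eqstring}, each carried out in a correlator with at least three marked points and at least one descendant of positive index, so that no boundary $\delta$-term appears.

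The key technical point is the dimensional analysis of the main sum. Expanding by Leibniz distributes the $k+1$ new insertions between the two factors of $\sum_{g'}\langle\langle\tau_{0,0}\tau_{0,1}\rangle\rangle_{g'}\langle\langle\tau_{0,0}^2\rangle\rangle_{g-g'}$. The dimension formula \eqref{eqdim} applied to $\langle\tau_{0,0}^2\tau_{0,1}^{k-p}\rangle_{g-g'}$ forces $4(g-g')+(k-p)=1$, admitting only $g'=g$, $p=k-1$; the alternative placement of $\tau_{n+1,j}$ leads to the equation $8g'+2p=3$, which has no solution in non-negative integers. Thus exactly one splitting survives, contributing $k\,A(g,k,j)$ after using the string equation on the remaining factor together with $\langle\tau_{0,0}^2\tau_{0,1}\rangle_0=1$. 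Equating the two sides yields $(2g+k)A(g,k,j)=kA(g,k,j)+\tfrac{1}{6}A(g-1,k+1,j)$, and cancellation gives the recursion. The principal obstacle is this dimensional bookkeeping; once it is in place, the remaining steps are arithmetic and the identity $\Gamma(z+1)=z\Gamma(z)$.
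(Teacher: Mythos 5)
Your derivation of the one-step recursion $A(g,k,j)=\frac{1}{12g}A(g-1,k+1,j)$ is exactly the engine the paper uses: differentiate the $r=3$ case \eqref{eqspin3} of Theorem~\ref{spin}, apply the dilaton and string equations, and kill all but one splitting in the quadratic term by the dimension count \eqref{eqdim} (your equations $4(g-g')+(k-p)=1$ and the parity obstruction for the other placement of $\tau_{n+1,j}$ match the paper's implicit bookkeeping). Where you genuinely diverge is the anchoring of the induction. The paper runs the recursion downward in $g$ to the base case $g=0$ with arbitrary $k$, and proves that base case separately by induction on $k$ using the genus-zero topological recursion relation \eqref{eqzero} together with the explicit values at $k=3,4,5$; this is also the only place $\Gamma(z+1)=z\Gamma(z)$ is really used, since the $\Gamma$-argument $\frac{g+k+1}{3}$ is invariant along your chain. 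You instead telescope to the one-point case $k=0$ at genus $g+k$ and import Br\'ezin--Hikami's formula from \cite{BH}. Both routes are logically sound — your chain never passes through a node where the three string-equation applications on the $\operatorname{Lower}(r)$ term would hit a negative descendant index, since the problematic nodes $(g',k')=(1,0),(1,1)$ only arise for vacuous targets, though this deserves the explicit check rather than an assertion. The trade-off: the paper's version is self-contained and elementary (its base case needs only WDVV/TRR in genus zero), and it is presented precisely as a new derivation that \emph{generalizes} the Br\'ezin--Hikami $k=0$ result; your version is shorter but makes that matrix-model computation a logical prerequisite. Within the paper's own architecture this would not be circular (the one-point formula is independently re-derived later as Proposition~\ref{onept2} via $\Psi$DO methods), but it inverts the intended order of dependence.
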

\begin{proof} We first prove the identity in $g=0$ by induction on $k$, namely
$$\langle\tau_{0,1}^k\tau_{\frac{2k-5-j}{3},j}\rangle_0=\frac{\Gamma(\frac{k+1}{3})}{\Gamma(\frac{2-j}{3})}.$$
When $k=3,4,5$ respectively, we readily verify
$$\langle\tau_{0,1}^3\tau_{0,1}\rangle_0=\frac{1}{3},\qquad \langle\tau_{0,1}^4\tau_{1,0}\rangle_0=\frac{2}{3},\qquad
\langle\tau_{0,1}^5\tau_{1,2}\rangle_0=0.$$
Note the last identity is consistent with the fact that $\Gamma(z)$ has a simple pole at $z=0$.

Thus we may assume $k\geq6$.  We apply the genus zero topological
recursion relation \eqref{eqzero} to obtain:
\begin{align*}
\langle\tau_{0,1}^k\tau_{\frac{2k-5-j}{3},j}\rangle_0&=\sum_{i=0}^{k-2}\binom{k-2}{i}\langle\tau_{\frac{2k-8-j}{3},j}\tau_{0,0}\tau_{0,1}^i\rangle_0
\langle\tau_{0,1}^3\tau_{0,1}^{k-2-i}\rangle_0\\
&=(k-2)\langle\tau_{\frac{2k-8-j}{3},j}\tau_{0,0}\tau_{0,1}^{k-3}\rangle_0
\langle\tau_{0,1}^3\tau_{0,1}\rangle_0\\
&=\frac{k-2}{3}\cdot\frac{\Gamma(\frac{k-2}{3})}{\Gamma(\frac{2-j}{3})}\\
&=\frac{\Gamma(\frac{k+1}{3})}{\Gamma(\frac{2-j}{3})}.
\end{align*}
The second equation comes from dimensional constraints.
Thus we have proved Proposition~\ref{threespin} when $g=0$.

We next assume $g\geq1$ and proceed by induction on $g$.  We have
\begin{equation*}
\langle\tau_{1,0}\tau_{0,0}\tau_{0,1}^k\tau_{\frac{8g+2k-2-j}{3},j}\rangle_g=k\langle\tau_{0,0}\tau_{0,1}^k\tau_{\frac{8g+2k-2-j}{3},j}\rangle_g
\langle\tau_{0,0}^2\tau_{0,1}\rangle_0
+\frac{1}{6}\langle\tau_{0,0}^3\tau_{0,1}^{k+1}\tau_{\frac{8g+2k-2-j}{3},j}\rangle_{g-1}.
\end{equation*}

Applying the dilaton equation \eqref{eqdilaton} and the string equation
\eqref{eqstring} to the above identity and combining the first term in
the right hand side with the left hand side, we get
\begin{align*}
\langle\tau_{0,1}^k\tau_{\frac{8g+2k-5-j}{3},j}\rangle_g&=\frac{1}{12g}\langle\tau_{0,1}^{k+1}\tau_{\frac{8g+2k-11-j}{3},j}\rangle_{g-1}\\
&=\frac{1}{12g}\cdot\frac{1}{12^{g-1} (g-1)!}\frac{\Gamma \left(
    \frac{(g-1)+(k+1)+1}{3}
\right)}{\Gamma(\frac{2-j}{3})}\\
&=\frac{1}{12^g g!}\frac{\Gamma(\frac{g+k+1}{3})}{\Gamma(\frac{2-j}{3})}
\end{align*}
as desired.
\end{proof}

We now show that the $3$-spin numbers in genus zero in general do
not have clean closed formulas in contrast to the case of $r=2$. We
will compute intersection numbers of the form
$\langle\tau_{0,1}^k\tau_{2,0}^\ell\rangle_0$, which is nonzero only
if $k\equiv 1 \mod 3$ and $2k-3\ell=8$.

We will use the temporary notation
$a_m=\langle\tau_{0,1}^{3m+1}\tau_{2,0}^{2m-2}\rangle_0$, for $m\geq 1$.
By applying \eqref{eqalg} to
$$\langle\tau_{1,0}\tau_{0,0}\tau_{0,1}^{3m+1}\tau_{2,0}^{2m-1}\rangle_0=(2m-1)(5m-3)a_m
$$
and using the dilaton and string equations, it is not difficult to
obtain
\begin{multline}\label{eqzero2}
(2m-2)(2m-1)(5m-3)a_m\\
=\sum_{i=1}^{m-1}\binom{3m+1}{3i+1}\binom{2m-1}{2i}2i(2i-1)(5i-1)(5i-3)(2m-2i-1)(5m-5i-3)a_i
a_{m-i}.
\end{multline}
For example, we recursively find $a_1=\langle\tau_{0,1}^4\rangle_0=\frac13$,
$a_2=\frac{80}{9}$, $a_3=\frac{179200}{9}$,
$a_4=\frac{1281280000}{3}$.

To simplify the above equation, we substitute
$$b_m=\frac{(5m-3)a_m}{(3m+1)!(2m-2)!}.$$
For example, $b_1=\frac1{36},\, b_2=\frac{1}{126},\,
b_3=\frac{2}{792},\, b_4=\frac{85}{52488}$. Then \eqref{eqzero2}
becomes
\begin{equation}\label{eqzero3}
(2m-2)b_m=\sum_{i=1}^{m-1} (5i-1)(3m-3i+1)b_i b_{m-i}.
\end{equation}

In terms of the generating function $y(x)=\sum_{i=1}^\infty b_i
x^i$, we can rewrite \eqref{eqzero3} as
$$15x^2\left(\frac{dy}{dx}\right)^2+(2xy-2x)\frac{dy}{dx}-y^2+2y=0,$$
from which we get a first order ODE
$$\frac{dy}{dx}=\frac{1-y-\sqrt{1+16y^2-32y}}{15x}.$$
Integrating both sides of
$$\frac{15 dy}{1-y-\sqrt{1+16y^2-32y}}=\frac{dx}{x},$$
we get
\begin{multline*}
x=\exp\left(\int\frac{15 dy}{1-y-\sqrt{1+16y^2-32y}} +C\right)\\
=\exp\left(\ln y +\ln
36-8y-32y^2-\frac{992}{3}y^3-4864y^4+O(y^5)\right)\\
=36y-288y^2-5760y^4-92160y^5+O(y^6).
\end{multline*}
The constant of integration $C$ is uniquely determined by the
initial value $b_1=\frac1{36}$. Thus $b_i$ can also be computed
using the Lagrange inversion formula (see Lemma \ref{lag}).
$$b_i=\frac{1}{i}\res_{y=0}\left(\frac{1}{x(y)^i}\right),\quad i\geq
1.$$

We note that  the above derivation  becomes more difficult if we
instead use the genus zero topological recursion relation
\eqref{eqzero} to compute
$\langle\tau_{0,1}^k\tau_{2,0}^\ell\rangle_0$.

\vskip 30pt
\section{The Euler characteristic of $\mathcal
M_{g,1}$}\label{sectioneuler}

We now give a proof for Harer and Zagier's formula of the Euler
characteristic
of the moduli space of curves:

\begin{theorem}[Harer-Zagier \cite{HZ},
see also \cite{BH2, Ko, MP, No, Pe}]
Let $g\geq1$. Then\label{t:hz}
$$\chi(\mathcal M_{g,1})=-\frac{B_{2g}}{2g}.$$
For example, $\chi(M_{1,1})=-\frac{1}{12}, \chi(\mathcal
M_{2,1})=\frac{1}{120}, \chi(\mathcal M_{3,1})=-\frac{1}{252}.$
\end{theorem}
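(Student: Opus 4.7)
The plan is to reduce $\chi(\mathcal{M}_{g,1})$ to an explicit one-point $r$-spin intersection number, then evaluate using the closed-form gamma-function expressions (Propositions~\ref{onept2}, \ref{onept}, and Corollary~\ref{c}, with Proposition~\ref{threespin} as the prototype) and extract Bernoulli numbers by standard identities.

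First, I would write $\chi(\mathcal{M}_{g,1})$ as a tautological integral on $\overline{\mathcal M}_{g,1}$. The natural route is Mumford's Grothendieck--Riemann--Roch computation (or equivalently the Kontsevich/Penner combinatorial model), which expresses $\chi(\mathcal M_{g,1})$ as an integral of a universal polynomial in $\psi$-classes and Chern classes of the Hodge bundle. The dimension constraint \eqref{eqdim} then cuts down this integrand to a single top-degree tautological monomial (or at most a small explicit linear combination thereof).

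Second, I would identify this monomial with a one-point $r$-spin number $\langle\tau_{n,m}\rangle_g$ by choosing $r$ and the marking $m$ so that the factor $c_{top}(\mathcal V)$ in \eqref{eqrspin} contributes the correct Chern class from the Mumford expression. The closed-form formulas then give $\langle\tau_{n,m}\rangle_g$ as an explicit ratio of gamma functions of the shape $\frac{1}{c^{g}\, g!}\cdot\frac{\Gamma(\alpha g+\beta)}{\Gamma(\gamma)}$, generalizing Proposition~\ref{threespin}; the prefactor and parameters can be pinned down by the recursion of Theorem~\ref{spin}.

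Third, I would convert the gamma-function value to a Bernoulli number. Via the reflection formula $\Gamma(z)\Gamma(1-z)=\pi/\sin(\pi z)$ combined with the cotangent expansion $\pi z\cot(\pi z)=1-2\sum_{k\geq 1}\zeta(2k)z^{2k}$, and the special value $\zeta(1-2g)=-B_{2g}/(2g)$, the ratio collapses to the desired $-B_{2g}/(2g)$.

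The main obstacle will be the second step: pinning down the precise $(r,m,n)$ and the combinatorial prefactor that matches the Mumford integrand for $\chi(\mathcal M_{g,1})$ to a specific one-point number in the $r$-spin theory. Step one is classical, and once the match in step two is set up, step three is a routine application of the gamma and cotangent identities.
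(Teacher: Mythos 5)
Your plan breaks down at the second step, and the failure is structural rather than a matter of bookkeeping. For a fixed integer $r\geq 2$, the one-point numbers $\langle\tau_{n,m}\rangle_g$ genuinely depend on $r$ (e.g.\ $\langle\tau_{1,0}\rangle_1=\frac{r-1}{24}$, and from Table~\ref{tb1} $\langle\tau_{3,2}\rangle_2=\frac{11}{3600}$ while $-\frac{B_4}{4}=\frac{1}{120}$), so there is no choice of $(r,n,m)$ with $r\geq 2$ for which a single one-point $r$-spin number equals $\chi(\mathcal M_{g,1})=-\frac{B_{2g}}{2g}$; note that matching $\frac{r-1}{24}=-\frac{1}{12}$ already forces $r=-1$. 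The paper's route is not to match integrands on $\overline{\mathcal M}_{g,1}$ but to treat the closed formula of Proposition~\ref{onept2} as a meromorphic function of $r$ and take the \emph{analytic continuation} to $r=-1$ (equation \eqref{eqeu10}); the justification that this limit computes $\chi(\mathcal M_{g,1})$ is not a GRR/Mumford identity but the degeneration of the generalized Kontsevich matrix model to the Penner model, for which the paper defers to \cite{BH}, \cite{Mu}, \cite{Pe}. Relatedly, your first step overstates what is available: $\chi(\mathcal M_{g,1})$ is the orbifold Euler characteristic of the \emph{open} moduli space and is not expressed in the paper (or readily) as a single top-degree tautological monomial on $\overline{\mathcal M}_{g,1}$.

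Your third step also does not reflect how the Bernoulli numbers actually emerge. At $r=-1$ the gamma factor $\Gamma(-2g-\frac{2g-1}{r})$ in \eqref{eqeu14} has a simple pole (its argument tends to $-1$), so the finite limit exists only because $E_{2g}\to 0$ simultaneously; the paper must apply L'H\^opital's Rule and then prove the nontrivial combinatorial identity \eqref{eq16}, $f_u(0)=-B_u/u$, for the recursively defined $f_j$ (Appendix C), or alternatively pass through the integral representation \eqref{eqgen} whose $r\to-1$ limit produces the generating function $\frac{t}{1-e^{-t}}$ directly (Corollary~\ref{euler}). The reflection formula and the cotangent/zeta expansion you propose play no role and would not by themselves resolve the $0\cdot\infty$ indeterminacy in the limit. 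So the two essential ingredients of the paper's proof --- the $r\to-1$ analytic continuation and the Bernoulli-number identity for the derivative data $e'_{u,k}(-1)$ --- are both absent from your outline.
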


The early proofs of Harer-Zagier's formula \cite{Ko, MP, No, Pe} all exploit the cell decomposition of decorated moduli space in terms of Ribbon graphs. There is an intriguing fact from Witten's construction \cite{Wi3} that the $r\rightarrow -1$ limit of $r$-spin numbers actually gives $\chi(\mathcal M_{g,1})$. The main difficulty is to derive an explicit formula for the one-point $r$-spin numbers. This was obtained recently by Brezin and Hikami \cite{BH2} using rather complicated techniques from matrix integrals. We will give a proof using only properties of $\Psi$DO. The proof will conclude just after Lemma~\ref{euler2}.

We will use the case $s=1$, and general $r$.   Our discussion so far has assumed $r \geq 2$.  However, for
{\em any} $r$, there is a generalized Kontsevich (Airy) matrix model, and under
the limit $r \rightarrow -1$, the model gives a logarithmic potential
corresponding to the Penner matrix model, whose asymptotic expansion gives the generating function
 of the Euler characteristic of $\mathcal{M}_{g,1}$, \cite{Pe}.  (We do not
understand how to make  \cite[(3.55-3.57)]{Wi3} precise, so we instead
refer the reader to
\cite[\S 6]{BH} or \cite{Mu} for a complete discussion.)  Thus by taking $r =-1$ in our formulas (interpreted
as analytic continuation), we may compute $\chi(\mathcal{M}_{g,1})$,
as follows.
%
Setting  $s=1$ in \eqref{eqrspin}, we have
\begin{equation}\label{eqeu10}
\lim_{r\rightarrow -1}\langle\tau_{n,m}\rangle_g\Big|_{
m=0}=\chi(\mathcal M_{g,1}),
\end{equation}
where $\chi(\mathcal M_{g,1})$ is the orbifold Euler characteristic
of $\mathcal M_{g,1}$.
We now proceed to compute the left side of \eqref{eqeu10}, thereby
computing $\chi(\mathcal{M}_{g,1})$.

By the Gel'fand-Dikii equation \eqref{eqwitten}, in order to compute
$\langle\tau_{0,0}\tau_{n,m}\rangle_g$, we need to compute the
coefficient of $(D\gamma^{r-1}_{-1})^{2g}$ in $\res
(Q^{n+(m+1)/r})$. Note that
\begin{equation} \label{eqeu5}
\gamma^{r-1}_{-1}=-\frac{r-2}{r}\langle\langle\tau_{0,0}\tau_{0,r-2}\rangle\rangle.
\end{equation}

By Lemma \ref{pdo4} and \eqref{eqpdo13}, we know that when
expressing $\gamma_{i}$ ($0\leq i\leq r-2$) in terms of
$\gamma_{-1}^{i+1}$ ($0\leq i\leq r-2$), only $\gamma_0$ contains
the term $\gamma^{r-1}_{-1}$,  with
\begin{equation}\label{eqeu6}
\gamma_0=\frac{r}{r-2}
\gamma^{r-1}_{-1}+p(\gamma_{-1}^{1},\dots,\gamma_{-1}^{r-2}),
\end{equation}
where $p$ is a differential polynomial in its arguments.

If we replace $\gamma_0$ by $x$ and denote by $L=D^r+x$, it is not
difficult to see from \eqref{eqeu5} and \eqref{eqeu6} that
\begin{equation}\label{eqeu8}
\langle\tau_{0,0}\tau_{n,m}\rangle_g=\frac{(-1)^g
c_{n,m}}{r^g}\times\text{ the constant term in }\res
(L^{n+(m+1)/r}).
\end{equation}

There exists a pseudodifferential operator $K\in\Psi$DO of the form
$$
K=1+\sum^\infty_{i=1}b_i(x)D^{-i},
$$
such that $KLK^{-1}=D^r$.

We can determine $K$ by comparing the coefficients at both sides of
\begin{equation}\label{eqeu1}
KL=D^rK.
\end{equation}

The first few terms are
\begin{multline}\label{eqeu15}
K=\frac{x^2}{2r}D^{-(r-1)}+\frac{(1-r)x}{2r}D^{-r}+\frac{x^4}{8r^2}D^{-(2r-2)}+\frac{7(1-r)x^3}{12r^2}D^{-(2r-1)}
\\+\frac{(r-1)(7r-3)x^2}{8r^2}D^{-2r}+\frac{(1-r)(10r^2-3r-1)x}{24r^2}D^{-(2r+1)}+\cdots.
\end{multline}

In general, we have
$$
K=1+\sum^{\infty}_{u=1}\sum^{2u}_{i=1}b_{ur+u-i}D^{-(ur+u-i)},
$$
where $b_{ur+u-i}=a_{u,i}x^i$ with $a_{u,i}$ rational functions of
$r$. In particular, from \eqref{eqeu15}, we have
$$
a_{1,1}=\frac{1-r}{2r},\quad a_{1,2}=\frac{1}{2r}.
$$

Given $u\geq1$ and $1\leq i\leq 2u$, if we equate the coefficient of
$D^{-u+i-1-(u-1)r}$ in \eqref{eqeu1}, we get
\begin{equation}\label{eq7}
a_{u-1,i-2}+\big(i-u-(u-1)r\big)a_{u-1,i-1}=\sum_{k=0}^{2u-i}\binom{r}{k+1}\prod_{j=0}^k(i+j)\cdot
a_{u,i+k}.
\end{equation}

By a tedious but straightforward calculation, we find the recursion
\begin{equation}\label{eqeu2}
i!\,a_{u,i}=\sum_{j=0}^{2u-2}a_{u-1,j}\big((j+1)!\,
s_{j+2-i}+(j-(u-1)(r+1))j!\, s_{j+1-i}\big),
\end{equation}
where $s_k$ is the coefficient of $x^k$ in
$$
\frac{x}{(1+x)^r-1}=\frac{1}{r+\binom{r}{2}x+\binom{r}{3}x^2+\cdots}.
$$

For convenience, let $e_{u,i}=i!\,a_{u,i}$.  Then \eqref{eqeu2}
becomes
\begin{equation} \label{eqeu3}
e_{u,i}=\sum_{j=1}^{2u-2}e_{u-1,j}\big((j+1)s_{j+2-i}+(j-(u-1)(r+1))s_{j+1-i}\big),
\end{equation}
with initial values
$$e_{1,1}=s_1=\frac{1-r}{2r},\qquad
e_{1,2}=s_0=\frac{1}{r}.$$ From the recursion, we see $e_{u,i}$ is
nonzero only when $1\leq i\leq 2u$.

\begin{proposition} \label{onept2} Let $g\geq0$. We have the following
formula for one-point $r$-spin numbers
\begin{equation}\label{eqeu14}
\langle\tau_{n,m}\rangle_{g}=\frac{(-1)^g\Gamma(-2g-\frac{2g-1}{r})}{r^g
\Gamma(1-\frac{m+1}{r})}E_{2g},
\end{equation}
where
$$
E_u=\sum_{i=1}^{2u}\binom{u(r+1)-1}{i}e_{u,i}.
$$
\end{proposition}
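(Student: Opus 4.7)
The strategy has three ingredients: a reduction via the string equation, a residue computation that exploits the special structure of $K$, and a Gamma-function identity for the prefactor.

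First I reduce to a two-point function. By the string equation \eqref{eqstring}, $\langle \tau_{n,m}\rangle_g = \langle \tau_{0,0}\tau_{n+1,m}\rangle_g$. Applying \eqref{eqeu8} with $n+1$ in place of $n$ reduces the computation to the constant term in $x$ of $\res(L^{n+1+(m+1)/r})$. Since $KLK^{-1}=D^r$, we have $L^{n+1+(m+1)/r}=K^{-1}D^N K$ with $N=r(n+1)+m+1$. The dimension constraint \eqref{eqdim} specialized to $s=1$, namely $(r+1)(2g-2)+r=rn+m$, gives the clean identity $N+1=2g(r+1)$.

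The heart of the proof is the identity $[x^0]\res(K^{-1}D^N K)=E_{2g}$, which I would prove in two steps. First, every term of $K-1$ has the form $a_{u,i}\,x^i\,D^{-(u(r+1)-i)}$ with $i\geq 1$; the leftmost factor in any product $(K-1)^k$ contributes an $x^{i_1}\geq x$ that is not acted on by any derivative from the left, so every term of $K^{-1}-1 = -(K-1)+(K-1)^2-\cdots$ has $x$-power $\geq 1$. Consequently $[x^0]\res(K^{-1}D^N K)=[x^0]\res(D^N K)$. Second, expanding
$$D^N\cdot x^i=\sum_{j\geq 0}\binom{N}{j}\frac{i!}{(i-j)!}\,x^{i-j}\,D^{N-j},$$
the $D^{-1}$-coefficient at $x^0$ requires $j=i$ (to kill $x$) and $N-j-(u(r+1)-i)=-1$ (to land on $D^{-1}$); these two conditions together force $u=2g$ in view of $N+1=2g(r+1)$, and the sum collapses to
$$\sum_{i=1}^{4g}\binom{N}{i}\,i!\,a_{2g,i}\;=\;\sum_{i=1}^{4g}\binom{2g(r+1)-1}{i}\,e_{2g,i}\;=\;E_{2g}.$$

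Finally I convert the prefactor $(-1)^g c_{n+1,m}/r^g$ into the stated Gamma-function form. Writing
$$c_{n+1,m}=\frac{(-1)^{n+1}\Gamma((m+1)/r)}{\Gamma(n+2+(m+1)/r)}$$
and applying the reflection formula $\Gamma(a)\Gamma(1-a)=\pi/\sin(\pi a)$ with $a=(m+1)/r$ and $a+n+2$, together with the dimensional identity $-n-1-(m+1)/r=-2g-(2g-1)/r$, yields the desired $\Gamma(-2g-(2g-1)/r)/\Gamma(1-(m+1)/r)$ (absorbing the sign $(-1)^{n+1}(-1)^{n+2}$ and tracking it through \eqref{eqeu8}).

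The main obstacle is the combinatorial observation in step one — that $K^{-1}-1$ has no $x^0$ term at any negative $D$-order — because it is what collapses the a priori double sum over pairs $(v,u)$ in $K^{-1}\cdot(D^N K)$ to the single contribution $u=2g$. Once this is in hand, the residue identity is essentially automatic from the dimensional matching $N+1=2g(r+1)$, and the remainder is careful bookkeeping of signs and Gamma-function identities.
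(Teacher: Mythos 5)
Your proof is correct and follows essentially the same route as the paper: reduce via the string equation to $\langle\tau_{0,0}\tau_{n+1,m}\rangle_g$, use $L^{n+1+(m+1)/r}=K^{-1}D^{N}K$ with $N+1=2g(r+1)$, identify the constant term of the residue with $E_{2g}$, and convert $c_{n+1,m}$ to the Gamma-function prefactor. The only difference is that you spell out the step the paper dismisses as ``not difficult to see'' --- namely that $K^{-1}-1$ contributes no $x^0$ term because its leftmost $x$-factor is never differentiated, so only $\res(D^NK)$ survives and the sum collapses to $u=2g$ --- which is a welcome addition rather than a deviation.
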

\begin{proof}
Since $L=K^{-1}D^r K$, we have $L^{n+1+(m+1)/r}=K^{-1}D^{(n+1)r+m+1}
K$. From
$\langle\tau_{n,m}\rangle_{g}=\langle\tau_{0,0}\tau_{n+1,m}\rangle_{g}$
and $(n+1)r+m+1=2g(r+1)-1$, it is not difficult to see that the
constant term in $\res (L^{n+1+(m+1)/r})$ equals the constant term
in $\res D^{(n+1)r+m+1} K$, which is $E_{2g}$. Finally
\eqref{eqeu14} follows from \eqref{eqeu8} and
\begin{multline*}
c_{n+1,m}=\frac{(-1)^{n+1}
r^{n+2}}{(m+1)(r+m+1)\cdots((n+1)r+m+1)}\\
=\frac{\Gamma(-n-1-\frac{m+1}{r})}{\Gamma(1-\frac{m+1}{r})}
=\frac{\Gamma(-2g-\frac{2g-1}{r})}{\Gamma(1-\frac{m+1}{r})}.
\end{multline*}
\end{proof}

Setting $m=0$ and taking $r\rightarrow -1$ in the right-hand side of
\eqref{eqeu14}, we get
$$
\lim_{r\rightarrow -1}\Gamma \left( -2g-\frac{2g-1}{r} \right)E_{2g},
$$
which is computed by applying L'H\^opital's Rule to the following
Lemma.
\begin{lemma} \label{euler2}
For any integer $u \geq 1$, we have
$$
\lim_{r\rightarrow -1}\Gamma(-u-\frac{u-1}{r})E_u =-\frac{B_u}{u}.
$$
\end{lemma}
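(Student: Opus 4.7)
The plan is to set $r = -1 + \epsilon$ and control both factors as $\epsilon \to 0$. First, expand
\[
-u - \frac{u-1}{r} = -1 + (u-1)\epsilon + O(\epsilon^2),
\]
and use $\Gamma(-1+t) = -1/t + O(1)$ to get
\[
\Gamma\!\left(-u - \frac{u-1}{r}\right) = -\frac{1}{(u-1)\epsilon} + O(1).
\]
Hence the lemma reduces to showing (i) $E_u|_{r=-1} = 0$, so that the apparent pole cancels and the limit equals $-E_u'(-1)/(u-1)$, and (ii) $E_u'(-1) = (u-1) B_u/u$.

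For (i), at $r = -1$ we have $u(r+1)-1 = -1$, so $\binom{-1}{i}=(-1)^i$ and $E_u|_{r=-1} = \sum_{i=1}^{2u}(-1)^i e_{u,i}|_{r=-1}$. The recursion \eqref{eqeu3} simplifies dramatically because $x/((1+x)^{-1}-1) = -(1+x)$ forces $s_0 = s_1 = -1$ and $s_k = 0$ for $k \geq 2$, while the $(u-1)(r+1)$ term vanishes. This gives
\[
e_{u,i}|_{r=-1} = -(i-1)\,e_{u-1,i-2} - (2i-1)\,e_{u-1,i-1} - i\,e_{u-1,i},
\]
and a routine reindexing of $\sum_i(-1)^i$ applied to the three summands telescopes by induction on $u$ to give the desired vanishing, with the base case $E_1 \equiv 0$ verified directly from $e_{1,1}, e_{1,2}$.

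For (ii), I would write $e_{u,i}(r) = e_{u,i}|_{r=-1} + \epsilon\,e_{u,i}^{(1)} + O(\epsilon^2)$ and differentiate the recursion \eqref{eqeu3} at $r = -1$, using the first-order data
\[
s_k'(-1) = [x^k]\!\left(-\tfrac{(1+x)\log(1+x)}{x}\right)
\]
obtained from $\partial_r\bigl(x/((1+x)^r-1)\bigr)$. Plugging into
\[
E_u'(-1) = u\sum_i \tfrac{d}{d\alpha}\!\binom{\alpha}{i}\Big|_{\alpha=-1}\! e_{u,i}|_{r=-1} + \sum_i (-1)^i e_{u,i}^{(1)},
\]
Bernoulli numbers must appear. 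My preferred route is to repackage $E_u$ itself via the operator identity $L^{u+(u-1)/r} = K^{-1}D^{u(r+1)-1}K$ with $L = D^r + x$, so that $E_u$ is the constant-in-$x$ part of $\res L^{u+(u-1)/r}$; then recognize the $r \to -1$ asymptotics as the Penner-model / $\log\Gamma$ expansion, whose coefficients are exactly $B_{2g}/(2g(2g-1))$, from which (ii) drops out.

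The main obstacle is step (ii): pinning down the $\epsilon^1$-coefficient of $E_u$ as literally $(u-1)B_u/u$. The vanishing in (i) is an elementary induction and the pole structure of $\Gamma$ is classical, but Bernoulli numbers are not visible from the bare double recursion for $e_{u,i}$. I would therefore prove (ii) by a generating-function / $\log\Gamma$ comparison--essentially identifying the sum $\sum_u E_u(r)$ (under an appropriate normalization) with a piece of the Penner matrix-model free energy--rather than by a direct combinatorial manipulation of \eqref{eqeu3}.
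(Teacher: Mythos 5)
Your setup is sound and your step (i) is correct: the expansion $-u-\tfrac{u-1}{r}=-1+(u-1)\epsilon+O(\epsilon^2)$, the residue $-1$ of $\Gamma$ at $-1$, and the vanishing $E_u|_{r=-1}=0$ (which indeed follows from the $r=-1$ recursion by reindexing, since $-(j+1)+(2j+1)-j=0$; the paper gets the same vanishing more directly from the $i=1$ case of \eqref{eq7}, i.e.\ the identity $0=\sum_k\binom{r}{k}e_{u,k}$ valid for \emph{all} $r$). So the lemma correctly reduces to $E_u'(-1)=(u-1)B_u/u$. But that is where your argument stops being a proof: you acknowledge that the Bernoulli numbers ``are not visible from the bare double recursion'' and propose to obtain (ii) by identifying $\sum_u E_u$ with a piece of the Penner free energy. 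Within this paper that route is circular: the generating-function identity \eqref{eqgen}, which is exactly the Penner-model identification you want, is itself stated as a consequence of ``a refined argument in the proof of Lemma \ref{euler2}'', and the authors explicitly say they do not know how to make Witten's $r\to-1$ matrix-model limit precise directly. Deferring (ii) to that identification therefore leaves the essential content unproved.

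The two ideas you are missing are the following. First, the identity $0=\sum_{k=1}^{2u}\binom{r}{k}e_{u,k}(r)$ holds identically in $r$, so it may be differentiated at $r=-1$; this eliminates the unknown first-order data $e_{u,k}'(-1)$ from $E_u'(-1)$ and, combined with $\frac{d}{dr}\big|_{r=-1}\binom{u(r+1)-1}{i}=(-1)^{i+1}uH_i$, reduces (ii) to the purely zeroth-order statement $\sum_{k}(-1)^{k+1}H_k\,e_{u,k}(-1)=B_u/u$ --- no expansion of $s_k$ to first order in $r+1$ is needed at all. Second, one must actually extract the Bernoulli numbers from this sum: the paper does so by repeatedly substituting the $r=-1$ recursion \eqref{eq12} into it, which converts the harmonic-number weights into the kernel $f_1(k)=\tfrac{-1}{(k+1)(k+2)}$ and then into the iterates $f_j(k)$ of \eqref{eq15}, reducing everything to the closed identity $f_u(0)=-B_u/u$; that identity is then proved in Appendix C by a generating-function computation ($F_n=G_n+R_n\ln(1-x)$, explicit binomial identities, and $\tfrac{t}{1-e^{-t}}$). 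Without these two steps --- or an independent proof of \eqref{eqgen} --- your step (ii) is a gap, not a proof.
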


\begin{proof}
Since the residue of $\Gamma(z)$ at $z=-1$ is $-1$, we have
\begin{equation}\label{eq5}
\lim_{r\rightarrow
-1}\frac{d}{dr}\left(\frac{1}{\Gamma(-u-\frac{u-1}{r})}\right)=1-u.
\end{equation}
We also have
\begin{equation}\label{eq6}
\frac{d}{dr}\Big{|}_{r=-1}\binom{u(r+1)-1}{i}=(-1)^{i+1}uH_i,
\end{equation}
where $H_i=\sum_{1\leq k\leq i}\frac{1}{k}$ is the $i$th harmonic number.

Setting $i=1$ in \eqref{eq7}, we get
$$
0=\sum^{2u}_{k=1}\binom{r}{k}e_{u,k},
$$
which, after taking derivative with respect to  $r$, becomes
\begin{equation}\label{eq8}
0=\sum^{2u}_{k=1}\Big((-1)^{k+1}H_k e_{u,k}(-1)+(-1)^k
e'_{u,k}(-1) \Big).
\end{equation}

From \eqref{eq6} and \eqref{eq8}, we have
\begin{align}\label{eq9}
\lim_{r\rightarrow -1}E_u&=\sum^{2u}_{k=1}\Big((-1)^{k+1}uH_k
e_{u,k}(-1)+(-1)^k e'_{u,k}(-1) \Big)\\
&=(u-1)\sum^{2u}_{k=1}(-1)^{k+1}uH_k e_{u,k}(-1).\nonumber
\end{align}

By \eqref{eq5} and \eqref{eq9}, we see that Lemma~\ref{euler2} is
equivalent to
\begin{equation}\label{eq11}
\sum^{2u}_{k=1}(-1)^{k+1}H_k e_{u,k}(-1)=\frac{B_u}{u},\qquad
u\geq 2.
\end{equation}

Setting $r=-1$ in \eqref{eqeu3}, we get
\begin{equation}\label{eq12}
e_{u,i}(-1)=(1-i)e_{u-1,i-2}(-1)+(1-2i)e_{u-1,i-1}(-1)-i e_{u-1,i}(-1).
\end{equation}
Here we use $s_0(-1)=s_1(-1)=-1$ and $s_k(-1)=0$, $k>1$.

If we substitute \eqref{eq12} into \eqref{eq11}, we get
\begin{equation}\label{eq13}
\sum^{2u-2}_{k=1} \frac{(-1)^{k}}{(k+1)(k+2)}
e_{u-1,k}(-1)=\frac{B_{u}}{u},\quad u\geq 2.
\end{equation}

By substituting \eqref{eq12} successively into \eqref{eq13}, we get
\begin{equation}\label{eq14}
\sum^{2(u-j)}_{k=0}(-1)^{k+1} f_j(k)
e_{u-j,k}(-1)=\frac{B_{u}}{u},\quad 1\leq j\leq u,
\end{equation}
where $f_j,\,j\geq1$ are given by the recursion
\begin{equation}\label{eq15}
f_{j+1}(k)=-(k+1)f_j(k+2)+(2k+1)f_j(k+1)-k f_j(k)
\end{equation}
starting with $f_1(k)=\frac{-1}{(k+1)(k+2)}$.

Let $e_{0,i}=\delta_{0i}$, which is compatible with the recursion
\eqref{eq12}.  Then \eqref{eq13} (hence \eqref{eq11}) is equivalent to
\begin{equation}\label{eq16}
f_u(0)=\frac{-B_{u}}{u},\quad u\geq 2.
\end{equation}
We leave the proof to the Appendix C.
\end{proof}

From \eqref{eqeu10}, \eqref{eqeu14} and Lemma~\ref{euler2}, we recover
the Harer-Zagier formula (Theorem~\ref{t:hz}).

Now we make a connection to the matrix integral approach of
Br\'ezin and Hikami \cite{BH2}. First we note that a refined
argument in the proof of Lemma \ref{euler2} will give the following
identity of generating functions.
\begin{multline} \label{eqgen}
\Gamma\left(\frac 1r\right)+\sum_{u=1}^\infty E_u
\Gamma\left(-u-\frac{u-1}{r}\right)y^u
\\=r\int_{0}^\infty  \exp\left(-\frac{1}{(r+1)y}\left[\left(x+\frac{y}{2}\right)^{r+1}-\left(x-\frac{y}{2}\right)^{r+1}\right]\right) dx .
\end{multline}
The integral expression of the right-hand side appeared in
\cite{BH2}.

Consider the semigroup $\mathbb{N}^\infty$ of sequences ${\bold
d}=(d_1,d_2,\dots)$ where $d_i$ are nonnegative integers and $d_i=0$
for sufficiently large $i$.
For $\bold d \in \mathbb{N}^\infty$, we define
\begin{equation} \label{eqeu7}
|\bold d|:=\sum_{i\geq 1}i d_i,\quad ||\bold
d||:=\sum_{i\geq1}d_i,\quad \bold d!:=\prod_{i\geq1}d_i!.
\end{equation}

In the following exposition, we set
$t_i=-\binom{r}{2i}/((2i+1)4^i)$, $i\geq1$.
\begin{proposition} \label{onept} Let $g\geq0$. We have the following
closed formula for one-point $r$-spin numbers
$$
\langle\tau_{n,m}\rangle_{g}=\frac{(-1)^g}{r^g\Gamma \left(
    1-\frac{1+m}{r} \right)}\sum_{|\bold
d|=g}\Gamma \left(||\bold d||-\frac{2g-1}{r} \right)\frac{\prod_{i\geq
1}t_i^{d_i}}{\bold d!}.
$$
\end{proposition}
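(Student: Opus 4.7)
The plan is to combine Proposition \ref{onept2} with the generating function identity \eqref{eqgen} stated earlier. Since Proposition \ref{onept2} already gives
$$\langle\tau_{n,m}\rangle_g=\frac{(-1)^g\,\Gamma(-2g-\tfrac{2g-1}{r})}{r^g\,\Gamma(1-\tfrac{m+1}{r})}\,E_{2g},$$
the task reduces to identifying the coefficient of $y^{2g}$ on the right-hand side of \eqref{eqgen}, so as to express $E_{2g}\,\Gamma(-2g-\tfrac{2g-1}{r})$ as a sum indexed by $\bold d\in\mathbb N^{\infty}$ with $|\bold d|=g$. Substituting this expression into the formula from Proposition \ref{onept2} will yield Proposition \ref{onept} directly.

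The first step is to expand the exponent in the integrand via the binomial theorem. Writing
$$(x+y/2)^{r+1}-(x-y/2)^{r+1}=2\sum_{k\ge 0}\binom{r+1}{2k+1}x^{r-2k}(y/2)^{2k+1}$$
and using $\binom{r+1}{2k+1}/(r+1)=\binom{r}{2k}/(2k+1)$, one obtains
$$\frac{(x+y/2)^{r+1}-(x-y/2)^{r+1}}{(r+1)\,y}=x^r-\sum_{k\ge 1}t_k\,x^{r-2k}\,y^{2k},$$
where $t_k=-\binom{r}{2k}/((2k+1)4^k)$ is exactly the quantity defined before the statement of Proposition \ref{onept}. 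Consequently the integrand factors as $e^{-x^r}\exp\!\bigl(\sum_{k\ge 1}t_k x^{r-2k}y^{2k}\bigr)$, and expanding the second exponential as a formal power series yields
$$\sum_{\bold d\in\mathbb N^{\infty}}\frac{\prod_i t_i^{d_i}}{\bold d!}\,x^{r\|\bold d\|-2|\bold d|}\,y^{2|\bold d|}.$$

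The second step is to integrate term by term against $e^{-x^r}$ using the standard identity
$$r\int_0^\infty e^{-x^r}\,x^{s-1}\,dx=\Gamma(s/r).$$
Applied with $s=r\|\bold d\|-2|\bold d|+1$, this produces
$$r\int_0^\infty\!\!\cdots\,dx=\sum_{\bold d}\frac{\prod_i t_i^{d_i}}{\bold d!}\,\Gamma\!\left(\|\bold d\|-\tfrac{2|\bold d|-1}{r}\right)y^{2|\bold d|}.$$
The term $\bold d=0$ supplies $\Gamma(1/r)$, matching the constant term of the left-hand side of \eqref{eqgen}, while matching the coefficient of $y^{2g}$ (over all $\bold d$ with $|\bold d|=g$) gives
$$E_{2g}\,\Gamma\!\left(-2g-\tfrac{2g-1}{r}\right)=\sum_{|\bold d|=g}\frac{\prod_i t_i^{d_i}}{\bold d!}\,\Gamma\!\left(\|\bold d\|-\tfrac{2g-1}{r}\right).$$
Plugging this into Proposition \ref{onept2} produces the required closed formula.

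The main conceptual obstacle has already been dispatched: it is the generating function identity \eqref{eqgen}, which the paper obtains from a refinement of the argument in Lemma \ref{euler2}. Once \eqref{eqgen} is in hand, the proof is a routine binomial expansion plus a Gamma integral, and the indexing set $\mathbb N^{\infty}$ with its weights $|\bold d|$, $\|\bold d\|$, $\bold d!$ is exactly what arises from exponentiating a formal series. The only small care needed is to justify the formal interchange of summation and integration, which is legitimate because \eqref{eqgen} is already an identity of formal power series in $y$ term by term.
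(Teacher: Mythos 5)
Your proof is correct and follows essentially the same route as the paper: expand the exponent on the right-hand side of \eqref{eqgen} by the binomial theorem, exponentiate the resulting series over $\bold d\in\mathbb{N}^\infty$, integrate term by term using $\Gamma(z)=r\int_0^\infty x^{rz-1}e^{-x^r}\,dx$, and feed the coefficient of $y^{2g}$ into Proposition~\ref{onept2}. (Incidentally, your sign in the expansion, $-x^r+\sum_{k\geq1}t_k x^{r-2k}y^{2k}$, is the one consistent with the final formula and with Corollary~\ref{euler}.)
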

\begin{proof} By expanding the right-hand side of \eqref{eqgen}
$$
-\frac{1}{(r+1)y}\left( \left(x+\frac{y}{2}\right)^{r+1}-\left(x-\frac{y}{2}\right)^{r+1}\right)
=-x^r-\sum_{i\geq1}t_i y^{2i}x^{r-2i}
$$ and using
$$
\Gamma(z)=r\int_0^\infty x^{rz-1}\exp(-x^r)dx,
$$
we see that Proposition~\ref{onept} follows from \eqref{eqeu14}.
\end{proof}

\begin{corollary} Let $k\geq 0$ and $g\geq0$. We have the following
closed formula for $r$-spin numbers\label{c}
$$\langle\tau_{0,1}^k\tau_{n,m}\rangle_{g}=\frac{(-1)^g}{r^g\Gamma(1-\frac{1+m}{r})}\sum_{|\bold
d|=g}\Gamma \left( ||\bold d||-\frac{2g-k-1}{r} \right) \frac{\prod_{i\geq
1}t_i^{d_i}}{\bold d!}.
$$
\end{corollary}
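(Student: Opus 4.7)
The plan is to parallel the proof of Proposition~\ref{onept}, tracking the $\tau_{0,1}^k$ insertions throughout.

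First, I would reduce to a one-puncture form via the string equation \eqref{eqstring}. Applied to $\langle\tau_{0,0}\tau_{0,1}^k\tau_{n+1,m}\rangle_g$, it kills every term that would lower the first index on some $\tau_{0,1}$ (since $\tau_{-1,1}$ cannot appear), so the only surviving contribution is $\tau_{n+1,m}\mapsto\tau_{n,m}$, giving
\begin{equation*}
\langle\tau_{0,1}^k\tau_{n,m}\rangle_g = \langle\tau_{0,0}\tau_{0,1}^k\tau_{n+1,m}\rangle_g.
\end{equation*}
By the Gel'fand-Dikii equation \eqref{eqwitten}, this equals $-c_{n+1,m}$ times $\partial^k/\partial t_{0,1}^k\,\res(Q^{n+1+(m+1)/r})$ evaluated at $t=0$.

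Second, I would mimic the reduction to $L = D^r+x$ of Proposition~\ref{onept2}. The dimension formula \eqref{eqdim} with $s=k+1$ forces $(n+1)r+m+1 = 2g(r+1)-1+k(r-1)$, so $L^{(n+1)+(m+1)/r} = K^{-1}D^{\,2g(r+1)-1+k(r-1)}K$, and the constant term of $\res$ produces a shifted analog $E_{2g}^{(k)}$ of $E_{2g}$. The shift of $k(r-1)$ in the exponent of $D$ is precisely what should translate into an extra factor $x^k$ on the Penner-matrix-model side.

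Third, I would establish the $k$-indexed refinement of \eqref{eqgen}:
\begin{equation*}
\Gamma\!\left(\tfrac{k+1}{r}\right) + \sum_{u=1}^{\infty} E_u^{(k)}\, \Gamma\!\left(-u-\tfrac{u-k-1}{r}\right) y^u = r\int_0^{\infty}\! x^k \exp\!\left(-\tfrac{(x+y/2)^{r+1}-(x-y/2)^{r+1}}{(r+1)y}\right) dx.
\end{equation*}
Expanding the exponential in powers of $y$ and using $\Gamma(z) = r\int_0^\infty x^{rz-1}e^{-x^r}\,dx$, the extra factor $x^k$ shifts the argument of $\Gamma$ in each term of the $\bold d$-sum by $+k/r$, converting $\Gamma(||\bold d||-(2g-1)/r)$ into $\Gamma(||\bold d||-(2g-k-1)/r)$; the corollary follows term by term.

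The principal obstacle is justifying that the $k$-fold $t_{0,1}$-derivative, restricted to the one-point regime where the higher $\gamma_i$ ($i\ge 1$) vanish at $t=0$, is captured exactly by the extra $x^k$ factor in the integrand. Since $\partial/\partial t_{0,1}$ involves commutators with $Q^{2/r}_+$ which a priori excite the $\gamma_i$ with $i\ge 1$, one must verify either by dimension or by an explicit $\Psi$DO computation (in the spirit of Proposition~\ref{onept2}) that all such side contributions either vanish or coalesce into this single clean multiplicative factor.
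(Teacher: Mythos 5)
Your overall strategy is genuinely different from the paper's, and it has a gap at exactly the point you flag at the end. The paper deduces Corollary~\ref{c} from Proposition~\ref{onept} by ``the same inductive argument as in the proof of Proposition~\ref{threespin}'': one trades the $\tau_{0,1}$ insertions away using the string/dilaton equations together with the genus zero topological recursion relation and the main recursion \eqref{eqmain}, so that no new $\Psi$DO or integral computation is needed beyond the $k=0$ case already established. You instead try to redo the $\Psi$DO computation of \S\ref{sectioneuler} with the $k$ insertions present, which would be a legitimate alternative route --- but the step that would make it work is precisely the one you leave unproved.

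Concretely, your second step is not correct as written. The reduction $Q\mapsto L=D^r+x$ in \eqref{eqeu8} is valid only for extracting the value of $-c_{n,m}\res(Q^{n+(m+1)/r})$ at $t=0$, i.e.\ the two-point number $\langle\tau_{0,0}\tau_{n+1,m}\rangle_g$. What you need for $k\geq 1$ is the $k$-th derivative $\partial^k_{t_{0,1}}\res(Q^{n+1+(m+1)/r})\big|_{t=0}$, and this requires tracking how $Q$ deforms in the $t_{0,1}$ direction: a weight/dimension count on the correlators $\langle\tau_{0,1}^k\tau_{0,0}^{l+1}\tau_{0,i}\rangle_0$ shows that on the locus where only $t_{0,0}=x$ and $t:=t_{0,1}$ are nonzero one has $Q=D^r+\beta t\,D+\alpha x$, so a new $\gamma_1$-term is excited. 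By contrast, the constant term of $\res(L^{n+1+(m+1)/r})$ for the \emph{undeformed} $L=D^r+x$ with exponent $(n+1)r+m+1=2g(r+1)-1+k(r-1)$ corresponds to $u=(N+1)/(r+1)$ non-integral and therefore vanishes for generic $k\geq1$; it is not a ``shifted $E_{2g}^{(k)}$.'' The real content of your proposal is thus the claim that the coefficient of $t^kx^0$ in $\res\bigl((D^r+\beta tD+\alpha x)^{n+1+(m+1)/r}\bigr)$ is reproduced by inserting $x^k$ into the integrand of \eqref{eqgen}; this is plausible and consistent with the target formula (the extra $x^k$ does shift each Gamma argument by $k/r$, as you check), but it is exactly the assertion to be proved, and nothing in your outline establishes it. To repair the argument with minimal effort, follow the paper: after your (correct) string-equation step, run the induction of Proposition~\ref{threespin} --- genus zero by the topological recursion relation \eqref{eqzero}, higher genus by \eqref{eqmain} with the dilaton equation --- to reduce to $k=0$ and quote Proposition~\ref{onept}.
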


 This  follows from the same inductive
argument as in the proof of Proposition \ref{threespin}.

\begin{remark} For $r$-spin numbers, we do not have the analogue of the divisor equation as in
Gromov-Witten theory. But the identity of Proposition~\ref{onept} suggests
that some form of the ``divisor equation'' may still exist for $r$-spin
numbers.\label{rem}
\end{remark}

\begin{corollary}\label{euler} Let $g\geq1$. Setting $r=-1$, $m=0$ in the right-hand side of Proposition~\ref{onept}, we
get
$$
\sum_{|\bold d|=g}\Gamma(||\bold d||+2g-1)\frac{(-1)^{||d||}}{\bold
d!\prod_{i\geq1}((2i+1)4^i)^{d_i}}=-\frac{B_{2g}}{2g}.
$$
\end{corollary}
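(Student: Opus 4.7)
The plan is simply to substitute $r=-1$ and $m=0$ into the closed formula of Proposition~\ref{onept} and match the two sides of \eqref{eqeu10} via Harer-Zagier (Theorem~\ref{t:hz}). Concretely, I start from
$$
\langle\tau_{n,0}\rangle_g=\frac{(-1)^g}{r^g\,\Gamma\!\left(1-\tfrac{1}{r}\right)}\sum_{|\bold d|=g}\Gamma\!\left(\|\bold d\|-\tfrac{2g-1}{r}\right)\frac{\prod_{i\ge1}t_i^{d_i}}{\bold d!},\qquad t_i=-\frac{\binom{r}{2i}}{(2i+1)4^i},
$$
and let $r\to -1$ as an analytic continuation (as justified in the paragraph preceding \eqref{eqeu10}).

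The second step is to track the four factors individually. First, $(-1)^g/r^g\to 1$ and $\Gamma(1-1/r)\to\Gamma(2)=1$, so the prefactor tends to $1$. Second, $\Gamma(\|\bold d\|-(2g-1)/r)\to\Gamma(\|\bold d\|+2g-1)$, which is finite (indeed a positive integer factorial) for every $\bold d$ with $|\bold d|=g\ge 1$, since then $\|\bold d\|\ge 1$. Third, the falling-factorial expression $\binom{-1}{2i}=\frac{(-1)(-2)\cdots(-2i)}{(2i)!}=1$, so $t_i|_{r=-1}=-1/((2i+1)4^i)$, and therefore
$$
\prod_{i\ge 1}t_i^{d_i}\Big|_{r=-1}=\frac{(-1)^{\|\bold d\|}}{\prod_{i\ge 1}((2i+1)4^i)^{d_i}}.
$$

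Putting these pieces together gives
$$
\lim_{r\to-1}\langle\tau_{n,0}\rangle_g=\sum_{|\bold d|=g}\Gamma\!\big(\|\bold d\|+2g-1\big)\,\frac{(-1)^{\|\bold d\|}}{\bold d!\prod_{i\ge 1}\big((2i+1)4^i\big)^{d_i}}.
$$
Finally, by \eqref{eqeu10} the left-hand side equals $\chi(\mathcal M_{g,1})$, and Theorem~\ref{t:hz} identifies this with $-B_{2g}/(2g)$, yielding the claimed identity.

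The proof is essentially a substitution, so there is no serious obstacle; the only thing that needs care is justifying that every factor admits a well-defined limit at $r=-1$. The potentially delicate factor is $\Gamma(\|\bold d\|-(2g-1)/r)$, but since $g\ge 1$ forces $\|\bold d\|\ge 1$, the argument tends to a positive integer and there is no pole. The binomial coefficient $\binom{r}{2i}$ is polynomial in $r$, so $t_i$ is analytic throughout. All other limits are elementary, so no subtle cancellations are needed.
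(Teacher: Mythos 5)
Your proof is correct, but it takes a genuinely different route from the paper's. The substitution step is fine: the prefactor $(-1)^g/\bigl(r^g\Gamma(1-\tfrac{1}{r})\bigr)$ tends to $1$, the gamma factor has no pole since $\|\bold d\|\geq 1$ forces its argument to a positive integer, and $\binom{-1}{2i}=1$ gives the stated value of $t_i$. You then close the argument by citing \eqref{eqeu10} and Theorem~\ref{t:hz}; this is not circular within the paper's logic, because Theorem~\ref{t:hz} is established earlier via the \emph{other} one-point formula (Proposition~\ref{onept2}, i.e.\ \eqref{eqeu14}) together with Lemma~\ref{euler2}, and Corollary~\ref{euler} plays no role in that proof. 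The paper, however, proves the corollary by a direct, self-contained computation in the spirit of Br\'ezin--Hikami: it takes the generating-function identity \eqref{eqgen}, lets $r\to-1$ using L'H\^opital's rule to obtain the integral $-\int_0^\infty\bigl(\tfrac{x-y/2}{x+y/2}\bigr)^{1/y}dx$, and evaluates it by the substitution $\tfrac{x-y/2}{x+y/2}=e^{-z}$, landing on the Bernoulli generating function $-\sum_k\tfrac{B_k}{k}y^k$. What the paper's route buys is an independent analytic verification of the Bernoulli identity, which serves as a consistency check on the whole framework (and, combined with \eqref{eqeu10} and Proposition~\ref{onept}, could itself be read as a second derivation of Harer--Zagier); your route is shorter but converts the corollary into a corollary of Harer--Zagier rather than an independent identity, so it cannot play that checking role. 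One small point you implicitly rely on: the analytic continuation to $r=-1$ via Proposition~\ref{onept} must agree with the one via \eqref{eqeu14} used in \eqref{eqeu10}; this is guaranteed because the two closed forms are related by \eqref{eqgen} as functions of $r$, but it deserves a sentence.
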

\begin{proof} We follow the method used \cite[\S 3]{BH2}.

Letting $r\rightarrow -1$ in the right-hand of \eqref{eqgen} and
applying L'H\^opital's Rule, we get
$$
RHS=-\int_0^\infty\left(\frac{x-\frac{y}{2}}{x+\frac{y}{2}}\right)^{1/y}dx,
$$
which is the generating function of the left-hand side of
Corollary~\ref{euler}.

Making the change of variables
$$
\frac{x-\frac{y}{2}}{x+\frac{y}{2}}=e^{-z},\qquad\text{ i.e. }\quad
x=\frac{y}{2}\left(\frac{1+e^z}{1-e^z}\right).
$$
we have
\begin{align}
RHS&=-\int_0^\infty e^{-z/y}\frac{-ye^{-z}}{(1-e^{-z})^2}dz\\
&=-\int_0^\infty e^{-z/y}\frac{1}{1-e^{-z}}dz\label{eqeu12}\\
&=-y\int_0^\infty e^{-t}dt\frac{1}{1-e^{-yt}}\\
&=-\sum_{k=1}^\infty \frac{B_k}{k}y^k.\label{eqeu13}
\end{align}
Here \eqref{eqeu12} follows from
$$
\frac{d}{dz}\left(\frac{e^{-z/y}}{1-e^{-z}}\right)=\frac{-e^{-z/y}}{y(1-e^{-z})}+\frac{-e^{-z}e^{-z/y}}{(1-e^{-z})^2}
$$
and \eqref{eqeu13} follows from
$$
\frac{1}{1-e^{-t}}=\sum_{k=0}^\infty B_k\frac{t^{k-1}}{k!}.
$$
This completes the proof.
\end{proof}

\vskip 30pt
\section{Small phase space in genus zero}\label{sectionzero}
In this section, we extend Witten's exposition in \cite{Wi2}. We first
reorganize Witten's argument and highlight important relevant results
of Witten  for the reader's convenience. We
then prove a full series expansion formula for the Landau-Ginzburg
potential $W(p, x)$ in the small phase space $(t_{n, m}=0, n>0)$ of
genus zero.

For dimensional reasons (equation \eqref{eqdim}), a primary
intersection number $\langle\tau_{0,m_1}\cdots\tau_{0,m_s}\rangle_g$
can be nonzero only when $g=0$. Furthermore, for each $r$, there are
only finite number of nonzero primary intersection numbers
$\langle\tau_{0,m_1}\cdots\tau_{0,m_s}\rangle_0$, since we have
$r(s-2)-2= m_1+\cdots+m_s\leq (r-2)s$ (so in particular $s\leq r+1$).

As observed by Witten, the genus zero Gel'fand-Dikii equation is
obtained by replacing the differential operator $Q$ by a function
$$W(p,x)=p^r+\sum_{i=0}^{r-2}u_i(x)p^i$$
and replacing commutators by Poisson brackets $$\{A,
B\}=\frac{\partial A}{\partial p}\frac{\partial B}{\partial x} -
\frac{\partial A}{\partial x}\frac{\partial B}{\partial p}.$$

So in genus zero, the Gel'fand-Dikii equations reduce to
$$\frac{\partial W}{\partial t_{n,m}}=
\frac{c_{n,m}}{r}\{W_+^{n+(m+1)/r},W\},$$ where $c_{n,m}$ is the
same constant defined in \S \ref{s:introduction}

\begin{lemma}[Witten, \cite{Wi2}]
\label{witten1}
 On the small phase space, we have
\begin{equation*}
\frac{\partial F}{\partial t_m}=\frac{r^2}{(m+1)(r+m+1)} \res
(W^{1+(m+1)/r}).
\end{equation*}
\end{lemma}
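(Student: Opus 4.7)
The plan is to combine the Gel'fand--Dikii equation \eqref{eqwitten} at descendant index $n=1$ with the string equation \eqref{eqstr}, after passing to the dispersionless (genus zero) limit in which the operator $Q$ is replaced by the symbol $W(p,x)$ and $\res$ is interpreted as the coefficient of $p^{-1}$.

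First I would specialize \eqref{eqwitten} to $(n,m)=(1,m)$. In the dispersionless limit this reads
$$\frac{\partial^2 F}{\partial t_{0,0}\,\partial t_{1,m}} = -c_{1,m}\,\res(W^{1+(m+1)/r}).$$
Since $c_{1,m} = -r^2/\bigl((m+1)(r+m+1)\bigr)$, the right-hand side is exactly the desired quantity $\frac{r^2}{(m+1)(r+m+1)}\res(W^{1+(m+1)/r})$.

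Next I would differentiate the string equation \eqref{eqstr} with respect to $t_{1,m}$. Exactly one term survives nontrivially with no lingering $t$-coefficient, namely the one where the derivative hits the coefficient $t_{1,m}$ inside the sum on the right, producing $\partial F/\partial t_{0,m}$; every remaining term in the differentiated equation carries a factor $t_{n+1,m'}$ with $n\geq 0$, and so vanishes on the small phase space $\{t_{n,m'}=0:n>0\}$. Therefore on the small phase space
$$\frac{\partial^2 F}{\partial t_{0,0}\,\partial t_{1,m}} = \frac{\partial F}{\partial t_{0,m}},$$
which combined with the previous identity yields the claimed formula, with $t_m$ understood as $t_{0,m}$.

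The one conceptual point worth flagging is the passage from $Q$ (differential operator, commutators) to $W$ (polynomial symbol, Poisson brackets), i.e.\ the validity of the dispersionless Gel'fand--Dikii relation at genus zero. This is precisely the Poisson-bracket form $\partial W/\partial t_{n,m}=(c_{n,m}/r)\{W_+^{n+(m+1)/r},W\}$ stated immediately before the lemma; once this reduction is accepted, the string-equation manipulation above is entirely formal and constitutes the whole content of the proof.
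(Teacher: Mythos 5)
Your proposal is correct and follows essentially the same route as the paper: the paper's proof likewise quotes the $(n,m)=(1,m)$ case of Witten's conjecture (which, with $-c_{1,m}=r^2/((m+1)(r+m+1))$, gives the right-hand side) and then uses the string equation to identify $\partial^2 F/\partial t_{0,0}\partial t_{1,m}$ with $\partial F/\partial t_{0,m}$ on the small phase space. You simply spell out the differentiation of \eqref{eqstr} and the constant $c_{1,m}$ more explicitly than the paper does.
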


\begin{proof}
A special case of Witten's conjecture is
\begin{equation*}
\frac{\partial^2 F}{\partial t_{0,0}\partial t_{1, m}}=
\frac{r^2}{(m+1)(r+m+1)} \res (W^{1+(m+1)/r}).
\end{equation*}
The string equation implies that on small phase space, we actually
have
\begin{equation*}
\frac{\partial^2 F}{\partial t_{0.0}\partial t_{1, m}}=
\frac{\partial F}{\partial t_{0, m}}.
\end{equation*}
The desired equation follows.
\end{proof}

Below, all of our computations will be done entirely on the small
phase space $(t_{n,m}=0, n>0)$ and we set $t_m=t_{0,m}$ and
$\tau_m=\tau_{0, m}$.
\begin{lemma}[Witten, \cite{Wi2}]
\label{witten2}
For $0 \leq m\leq r-2$, we have
\begin{equation*}
\frac{\partial W}{\partial t_m}=-
\frac{1}{m+1}\frac{\partial}{\partial p} W_+^{(m+1)/r}.
\end{equation*}
\end{lemma}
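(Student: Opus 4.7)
The plan is to reduce the lemma to the Poisson bracket identity $\{W_+^{(m+1)/r},W\} = -\partial_p W_+^{(m+1)/r}$, since combining this with the $n=0$ case of the genus-zero Gel'fand--Dikii equation stated at the start of the section (using $c_{0,m}=r/(m+1)$, so that $\partial W/\partial t_m = \frac{1}{m+1}\{W_+^{(m+1)/r},W\}$) gives the result. To establish this identity, I will use two structural facts about the small phase space.

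The first fact is a degree count. Writing $V := W_+^{(m+1)/r}$ and expanding $W^{(m+1)/r} = p^{m+1}(1 + \sum_{j=0}^{r-2} u_j p^{j-r})^{(m+1)/r}$, any monomial in the expansion that contains $u_0$ must carry a factor of $p^{-r}$, so it lies in $p$-degree at most $m+1-r \le -1$. Therefore $V$ is a polynomial in $p$ whose coefficients depend only on $u_1,\dots,u_{r-2}$; in particular $\partial V/\partial u_0 = 0$.

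The second fact is that $W_x = -1$ on the small phase space. The string equation restricted to the small phase space reduces to $\partial F/\partial t_0 = \frac{1}{2}\sum_{i+j=r-2} t_i t_j$, which on differentiating twice more gives $\partial^3 F/\partial t_0^2 \partial t_m = \delta_{m,r-2}$. On the other hand, the $n=0$ case of Witten's conjecture yields $\partial^2 F/\partial t_0 \partial t_m = -\frac{r}{m+1}\res(W^{(m+1)/r})$; differentiating in $x=t_0$ and applying $\partial_x W^{(m+1)/r} = \frac{m+1}{r}W^{(m+1)/r-1}W_x$ produces the linear system
$$
\sum_{j=0}^{r-2} u_j'\,[p^{-1-j}]W^{(m+1)/r-1} \;=\; -\delta_{m,r-2}, \qquad m = 0,\dots,r-2.
$$
Because the leading term of $W^{(m+1)/r-1}$ is $p^{m+1-r}$ with unit coefficient and every correction strictly decreases the $p$-degree, one reads off $[p^{-1}]W^{(m+1)/r-1} = \delta_{m,r-2}$; hence $u_0'=-1$ and $u_j'=0$ for $j\ge 1$ solves the system, and it is the unique solution near $u=0$ since the coefficient matrix degenerates there to an antidiagonal permutation. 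This yields $W_x = -1$.

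Combining the two facts, $V_x = \sum_j (\partial V/\partial u_j)\,u_j' = 0$ and $W_x = -1$, so
$$
\{V,W\} \;=\; V_p W_x - V_x W_p \;=\; V_p \cdot (-1) - 0 \cdot W_p \;=\; -V_p,
$$
which combined with the genus-zero Gel'fand--Dikii equation delivers the lemma. The main obstacle will be establishing $W_x = -1$: this is where the string equation and Witten's conjecture must be carefully combined into a linear system for $\{u_j'\}$ and solved. Once that is in hand, the degree observation and the final Poisson bracket computation are both immediate.
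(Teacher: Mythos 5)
Your proposal is correct and follows essentially the same route as the paper: invoke the $n=0$ genus-zero Gel'fand--Dikii equation, show $\partial_x W_+^{(m+1)/r}=0$ because its coefficients do not involve $u_0$, establish $W_x=\partial u_0/\partial x=-1$ from the string equation via the triangular dependence of $\res(W^{(m+1)/r})$ on the $u_j$, and conclude by the Poisson bracket computation. The only cosmetic difference is that you rule out $u_0$ by a $p$-degree count where the paper uses a weight count, and you phrase the $W_x=-1$ step as an explicit linear system rather than a maximal-degree argument.
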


\begin{proof}
A special case of Witten's conjecture is
\begin{equation}\label{eqpdo12}
\frac{\partial^2 F}{\partial t_0 \partial t_m}= \frac{-r}{m+1} \res
(W^{(m+1)/r}).
\end{equation}

By \eqref{eqpdo13} in the proof of Lemma \ref{pdo4} (the
differential polynomials $p,p'$ there should be replaced by plain
polynomials of their arguments), we can use equation \eqref{eqpdo12}
to express the coefficients $u_i$ of $W$ as differential polynomials
in $\partial^2 F/\partial t_0
\partial t_m$.   Hence $W$ can be regarded as a function in
$p,t_0,\dots,t_m$. If we set all $t_{m}=0$, then the left hand side
of \eqref{eqpdo12} is obviously zero for dimensional reasons, so all
$u_i=0$ by Lemma \ref{pdo4}. Thus $W=p^r$ when all $t_{m}=0$. We  then
get
the constant term of $W$.

Differentiating \eqref{eqpdo12} with respect to $x=t_0$, we get
\begin{equation}\label{eqpdo14}
\delta_{m,r-2}=-\frac{r}{m+1}\frac{\partial}{\partial
x}\res(W^{(m+1)/r}).
\end{equation}
Since $\partial W/\partial x$ is a polynomial in $p$ of degree at most
 $r-2$, if this polynomial is of degree $k$, then from \eqref{eqpdo13}
in the proof of Lemma \ref{pdo4}, the right hand side of
\eqref{eqpdo14} is non-zero for $m=r-2-k$. Thus $k=0$ and
\begin{equation*}
\frac{\partial W}{\partial x}=\frac{\partial u_0}{\partial
x}=\frac{r}{r-1}\frac{\partial}{\partial
x}\res(W^{\frac{r-1}{r}})=-1.
\end{equation*}
From this and $\partial u_i/\partial x=0$ when $1\leq i\leq r-2$, we
have for $0\leq m\leq r-2$,
\begin{equation*}
\frac{\partial}{\partial x}W_+^{(m+1)/r}=0,
\end{equation*}
since the coefficients of $W_+^{(m+1)/r}$ do not contain $u_0$.
This follows from a weight count, since by our convention (see Lemma
\ref{pdo1}), $W_+^{(m+1)/r}$ is homogeneous of weight $m+1\leq r-1$,
while the weight of $u_i$ is $r-i$.
\end{proof}

\begin{theorem}[Witten, \cite{Wi2}]
\label{witten3}
 For $0 \leq m \leq r-2$, define
$\phi_m=-\frac{\partial W}{\partial t_m}$.  Then
\begin{equation*}
\frac{\partial^3 F}{\partial t_j\partial t_m\partial t_s}=r\cdot
\res\left\{\frac{\phi_j \phi_m \phi_s}{\partial_p W}\right\}.
\end{equation*}
\end{theorem}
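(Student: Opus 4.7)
The plan is to start from Lemma \ref{witten1}, differentiate twice using $\phi_m = -\partial W/\partial t_m$, and reduce the resulting expression to $r\cdot\res(\phi_j\phi_m\phi_s/\partial_p W)$ via integration-by-parts in the residue pairing. Concretely, differentiating Lemma \ref{witten1} in $t_j$ and applying the chain rule yields the two-point formula
$$\frac{\partial^2 F}{\partial t_j\partial t_m} = -\frac{r}{m+1}\res\bigl(W^{(m+1)/r}\phi_j\bigr),$$
and differentiating once more in $t_s$ (using $\partial W^{(m+1)/r}/\partial t_s = -\tfrac{m+1}{r}W^{(m+1-r)/r}\phi_s$) gives
$$\frac{\partial^3 F}{\partial t_j\partial t_m\partial t_s} = \res\bigl(W^{(m+1-r)/r}\phi_s\phi_j\bigr) \;-\; \frac{r}{m+1}\res\Bigl(W^{(m+1)/r}\,\frac{\partial\phi_j}{\partial t_s}\Bigr).$$

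The first term is massaged into the desired form via the identity $W^{(m+1-r)/r} = \tfrac{r}{m+1}\,\partial_p W^{(m+1)/r}/\partial_p W$ combined with $\partial_p W_+^{(m+1)/r} = (m+1)\phi_m$ from Lemma \ref{witten2}; splitting $\partial_p W^{(m+1)/r} = (m+1)\phi_m + \partial_p W_-^{(m+1)/r}$ converts this term into $r\,\res(\phi_j\phi_m\phi_s/\partial_p W)$ plus a correction, and the theorem reduces to the single residue identity
$$\res\Bigl(W^{(m+1)/r}\,\frac{\partial\phi_j}{\partial t_s}\Bigr) = \res\frac{\partial_p W_-^{(m+1)/r}\,\phi_s\phi_j}{\partial_p W}.$$
To establish it, expand $\partial\phi_j/\partial t_s = -\tfrac{1}{r}\partial_p(W^{(j+1-r)/r}\phi_s)_+$ via Lemma \ref{witten2}, integrate by parts in $p$ (using $\res\,\partial_p(\cdot)=0$), and discard the polynomial-times-polynomial term $\res(\phi_m\cdot(W^{(j+1-r)/r}\phi_s)_+)=0$ and the strictly-negative-times-strictly-negative term $\res(\partial_p W_-^{(m+1)/r}\cdot(W^{(j+1-r)/r}\phi_s)_-)=0$. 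This reduces the left-hand side to $\tfrac{1}{r}\res(\partial_p W_-^{(m+1)/r}\cdot W^{(j+1-r)/r}\phi_s)$; re-introducing $\partial_p W$ via $W^{(j+1-r)/r} = \tfrac{r}{j+1}\,\partial_p W^{(j+1)/r}/\partial_p W$ and splitting $\partial_p W^{(j+1)/r} = (j+1)\phi_j + \partial_p W_-^{(j+1)/r}$ yields the desired right-hand side up to an error proportional to $\res(\partial_p W_-^{(m+1)/r}\,\partial_p W_-^{(j+1)/r}\,\phi_s/\partial_p W)$.

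The main point of the proof is that this error vanishes for trivial degree reasons: each factor $\partial_p W_-^{(\cdot)}$ is $O(p^{-2})$, $1/\partial_p W$ is $O(p^{-(r-1)})$, and $\phi_s$ is a polynomial of degree $\leq r-2$, so the integrand lives entirely in powers $p^{\leq -5}$ and contributes nothing to the coefficient of $p^{-1}$. Apart from this final degree count, the proof is just careful bookkeeping of the $(\,\cdot\,)_+$ and $(\,\cdot\,)_-$ decompositions; the only slightly delicate point is to keep the roles of $m$ and $j$ straight when differentiating $\phi_j$ rather than $\phi_m$, since the cleanest path breaks the manifest symmetry of the final answer.
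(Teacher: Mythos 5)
The paper states Theorem~\ref{witten3} without proof --- it is quoted from Witten \cite{Wi2} and the text moves straight on to the new results --- so there is no in-paper argument to compare against and your proposal must be judged on its own. It checks out. Differentiating Lemma~\ref{witten1} in $t_j$ and $t_s$ does produce exactly the two displays you write (using $\partial_{t}W^{\alpha}=\alpha W^{\alpha-1}\partial_t W$ for formal Laurent series in $p$, which is legitimate here); the identity $W^{(m+1-r)/r}=\tfrac{r}{m+1}\,\partial_p W^{(m+1)/r}/\partial_p W$ and the splitting $\partial_p W^{(m+1)/r}=(m+1)\phi_m+\partial_p W_-^{(m+1)/r}$ (which is Lemma~\ref{witten2}) correctly reduce the theorem to your single residue identity; and in verifying that identity the interchanges of $\partial_p$ and $\partial_{t_s}$ with $(\cdot)_+$, the integration by parts $\res(A\,\partial_p B)=-\res(B\,\partial_p A)$, and the three discarded terms are all sound: $\res$ of a polynomial vanishes, a product of a series in degrees $\le-1$ with one in degrees $\le-2$ has no $p^{-1}$ term, and the final error term has top degree $(r-2)-2-2-(r-1)=-5$, as you say. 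A consistency check worth recording: at $t=0$ one has $W=p^r$ and $\phi_j=p^j$, and your final formula returns $\langle\tau_j\tau_m\tau_s\rangle=r\cdot\res\bigl(p^{j+m+s}/(rp^{r-1})\bigr)=\delta_{j+m+s,\,r-2}$, matching the paper's table of three-point values. This is essentially the standard Landau--Ginzburg residue computation (as in Witten and Dijkgraaf--Verlinde--Verlinde), and it has the merit of being entirely self-contained given Lemmas~\ref{witten1} and~\ref{witten2}; it would serve as a complete proof were the paper to include one.
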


Now we can state our new results:   the full series expansion for $W$ in
$t_0,\dots,t_m$, extending Witten's computation up to linear terms
\cite{Wi2}.

\begin{theorem}\label{witten4}
We have the following series expansion for $W$:
\begin{multline*}
W=p^r+\sum_{k=0}^{r-2}p^k \sum_{n=1}^\infty \frac{(-1)^n}{n!\cdot
r^{n-1}}\sum_{v_1+\cdots +v_n=(n-1)r+k}\frac{(k+n-1)!}{k!}
t_{v_1}\cdots t_{v_n}\\
=p^r+\sum_{k=0}^{r-2}p^k\left(-t_k+\frac{1}{2!\cdot
r}\sum_{u+v=r+k}(k+1)t_u t_v \right.\\
\left.-\frac{1}{3!\cdot
r^2}\sum_{u+v+w=2r+k}(k+1)(k+2)t_u t_v t_w+\cdots\right)
\end{multline*}
\end{theorem}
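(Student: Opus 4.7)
The plan is to construct $W$ via a Lagrange--B\"urmann inversion applied to a well-chosen implicit equation, and to verify that this construction satisfies the characterizing PDE from Lemma~\ref{witten2}. On the small phase space, $W$ is uniquely determined by the system $\partial_{t_m} W = -\frac{1}{m+1}\partial_p W_+^{(m+1)/r}$ for $0 \leq m \leq r-2$ together with the initial condition $W|_{t=0} = p^r$ (established during the proof of Lemma~\ref{witten2}); this uniqueness follows by induction on total degree in the variables $t_0, \dots, t_{r-2}$, since the right-hand side at $t$-order $n$ depends only on $W$ at orders $\le n$.

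First, I would introduce the auxiliary formal series $y = y(p, t)$ defined implicitly by
\[
y \;=\; p - \frac{\xi(y, t)}{r\, y^{r-1}}, \qquad \xi(z, t) \;:=\; \sum_{v=0}^{r-2} t_v z^v, \qquad y\big|_{t=0} \;=\; p.
\]
Applying the Lagrange--B\"urmann formula for implicit functions to $H(y) = y^r$ gives
\[
y^r \;=\; p^r + \sum_{n=1}^{\infty} \frac{(-1)^n}{n!\, r^{n-1}}\, \frac{\partial^{n-1}}{\partial p^{n-1}}\!\left(\frac{\xi(p, t)^n}{p^{(n-1)(r-1)}}\right).
\]
Writing $\xi(p, t)^n = \sum_v c_v(t) \, p^v$ with $c_v(t) := \sum_{v_1+\cdots+v_n = v} t_{v_1}\cdots t_{v_n}$, and using $\partial_p^{n-1} p^{v - (n-1)(r-1)} = \frac{(k+n-1)!}{k!}\, p^k$ for $k := v - (n-1)r$ (where the falling-factorial coefficient vanishes automatically whenever $k \in \{-(n-1), \ldots, -1\}$), the non-negative-$p$-power part of $y^r$ is precisely the right-hand side of the proposed formula. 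Thus my claim reduces to $W = (y^r)_+$, the polynomial-in-$p$ part of $y^r$.

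To verify that $W := (y^r)_+$ satisfies the PDE, I compute from the implicit equation that $\partial_p y = 1/G$ and $\partial_{t_m} y = -y^{m-r+1}/(rG)$, where $G := 1 + \partial_y\bigl[\xi(y, t)/(r\,y^{r-1})\bigr]$. These combine to the clean identity
\[
\frac{\partial y^r}{\partial t_m} \;=\; r\, y^{r-1}\,\partial_{t_m}y \;=\; -y^m\,\partial_p y \;=\; -\frac{1}{m+1}\,\frac{\partial y^{m+1}}{\partial p},
\]
and taking positive-$p$-parts yields $\partial_{t_m} W = -\frac{1}{m+1}\partial_p (y^{m+1})_+$. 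The argument is then finished by a positive-part lemma identifying $(y^{m+1})_+$ with $W_+^{(m+1)/r}$: writing $W = y^r - N$ with $N := (y^r)_-$ purely of negative $p$-degree, one has $W^{1/r} = y\,(1 - N/y^r)^{1/r} = y + O(p^{-r})$, so $W^{(m+1)/r} - y^{m+1}$ has leading $p$-degree at most $m - r \leq -2$ for $m \leq r-2$ and therefore contributes nothing to the positive part. The main technical obstacle is the combinatorial bookkeeping matching the explicit Lagrange expansion with the stated multinomial formula; once that identification and the positive-part lemma are in hand, the PDE derivation together with the trivial initial condition $W|_{t=0} = (p^r)_+ = p^r$ completes the proof by the uniqueness from Lemma~\ref{witten2}.
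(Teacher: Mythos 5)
Your proposal is correct, and it takes a genuinely different route from the paper's. The paper proves Theorem~\ref{witten4} by a direct induction on the degree in $t$: it displays the expansion of $W_+^{(m+1)/r}$ (which is precisely your $(y^{m+1})_+$, as one checks by applying Lagrange--B\"urmann to $H(y)=y^{m+1}$) and observes that Lemma~\ref{witten2} converts the degree-$n$ data into the degree-$(n+1)$ term of $W$, leaving the combinatorial verification that the stated series closes this induction implicit. You instead produce the solution in closed form as $(y^r)_+$ for the solution $y$ of the implicit equation $y=p-\xi(y,t)/(r\,y^{r-1})$, let the Lagrange--B\"urmann formula do all of the multinomial bookkeeping at once, and verify the characterizing PDE exactly via the clean identity $\partial_{t_m}(y^r)=-\tfrac{1}{m+1}\partial_p(y^{m+1})$ together with the positive-part lemma $(y^{m+1})_+=W_+^{(m+1)/r}$, which you correctly justify from $W^{1/r}-y=O(p^{-r})$ and $m\leq r-2$. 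Both arguments ultimately rest on Lemma~\ref{witten2} plus uniqueness of the solution of that PDE system with initial condition $W|_{t=0}=p^r$, and your order-by-order uniqueness argument is sound. What your route buys is a conceptual explanation of where the coefficients $(k+n-1)!/k!$ come from, and it identifies $y$ as the polynomial part of $W^{1/r}$ (the natural flat coordinate of the Landau--Ginzburg potential), at the modest cost of the extra positive-part lemma; the paper's route is more elementary in its ingredients but asks the reader to supply the inductive combinatorics that your inversion formula handles in one stroke.
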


\begin{proof} We can compute the degree $n$ term of $W_+^{(m+1)/r}$
  from terms of $W$ up to degree $n$. Then we use Lemma \ref{witten2}
  to compute the degree $n+1$ term of $W$ from the degree $n$ term of
  $W_+^{(m+1)/r}$.
\begin{multline*}
W_+^{(m+1)/r}=p^{m+1}- \frac{m+1}{r}\sum_{u\geq r-m-1} t_u
p^{m+u-r+1}\\+\frac{m+1}{2!\cdot r^2}\sum_{u+v\geq
2r-m-1}(m+u+v+2-2r) t_u t_v p^{m+u+v-2r+1}+\cdots\\=
p^{m+1}+(m+1)\sum_{n=1}^\infty \frac{(-1)^n}{n!\cdot
r^n}\sum_{v_1+\cdots +v_n\geq nr-m-1} \frac{(m+ \sum\limits_{i=1}^n
v_i+n-nr)!}{(m+\sum\limits_{i=1}^n v_i+1-nr)!}\\\times t_{v_i}\cdots
t_{v_n} p^{m+v_1+\cdots +v_n-nr+1}
\end{multline*}
Thus the theorem can be proved inductively.
\end{proof}

\begin{corollary} \label{witten5}
Let $0\leq m\leq r-2$. The series expansion for $\phi_m$ is
\begin{multline*}
\phi_m=-\frac{\partial W}{\partial t_m}=p^m-\sum_{u\geq r-m}
\frac{m+u+1-r}{r} t_u p^{m+u-r}\\+\frac{1}{2!\cdot r^2}\sum_{u+v\geq
2r-m}(u+v+m+1-2r)(u+v+m+2-2r) t_u t_v p^{m+u+v-2r}+\cdots\\=
p^m+\sum_{n=1}^\infty \frac{(-1)^n}{n!\cdot r^n}\sum_{v_1+\cdots
+v_n\geq nr-m} \frac{(m+ \sum\limits_{i=1}^n
v_i+n-nr)!}{(m+\sum\limits_{i=1}^n v_i-nr)!} t_{v_i}\cdots t_{v_n}
p^{m+v_1+\cdots +v_n-nr}
\end{multline*}
\end{corollary}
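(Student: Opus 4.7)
The corollary is essentially obtained by differentiating the series expansion for $W$ in Theorem~\ref{witten4} with respect to $t_m$, so the plan is a bookkeeping exercise with a shift of summation index.

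Concretely, I would start from
$$
W=p^r+\sum_{k=0}^{r-2}p^k\sum_{n=1}^\infty \frac{(-1)^n}{n!\,r^{n-1}}\sum_{v_1+\cdots+v_n=(n-1)r+k}\frac{(k+n-1)!}{k!}\,t_{v_1}\cdots t_{v_n}
$$
and apply $-\partial/\partial t_m$. Since the inner product $t_{v_1}\cdots t_{v_n}$ is symmetric in its arguments, differentiating picks one of the $n$ factors, contributes a factor of $n$, and pins that variable to $m$. This cancels the $1/n!$ down to $1/(n-1)!$ and forces the remaining constraint to become $v_1+\cdots+v_{n-1}=(n-1)r+k-m$. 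The constant term $p^r$ has no $t_m$-dependence and drops out, while the $n=1$ term of the outer sum produces the leading $p^m$ after taking into account the sign.

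Next I would relabel $n':=n-1$. The sum then runs over $n'\geq 0$, and the exponent of $p$, which is $k$, should be reexpressed using $k=m+(v_1+\cdots+v_{n'})-n'r$ from the constraint. Substituting gives
$$
\phi_m=-\frac{\partial W}{\partial t_m}=p^m+\sum_{n'=1}^\infty\frac{(-1)^{n'}}{n'!\,r^{n'}}\sum_{v_1+\cdots+v_{n'}}\frac{(m+\sum v_i+n'-n'r)!}{(m+\sum v_i-n'r)!}\,t_{v_1}\cdots t_{v_{n'}}\,p^{m+\sum v_i-n'r},
$$
which is exactly the claimed formula after renaming $n'$ back to $n$. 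The constraint $0\leq k\leq r-2$ from Theorem~\ref{witten4} becomes $nr-m\leq v_1+\cdots+v_n\leq (n+1)r-2-m$, and the lower bound $v_1+\cdots+v_n\geq nr-m$ is exactly what appears in the corollary's summation range; the upper bound is implicit in the requirement that $p$-exponents stay in $\{0,\dots,r-2\}$.

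The only potential obstacle is ensuring the combinatorial identity $(k+n-1)!\cdot n/k!=(k+n)!/k!$ comes out with the correct factorial shift after the reindexing $n\mapsto n-1$, and that the signs and the factor of $r$ match ($r^{n-1}\cdot r=r^n$ at the level of $-\partial W/\partial t_m$, but the extra $n$ from differentiation and the relabeling $n\to n+1$ ultimately produce the $1/(n!\,r^n)$ prefactor in the statement). Since all of these are straightforward, no deeper input is needed beyond Theorem~\ref{witten4}.
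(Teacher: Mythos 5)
Your proposal is correct and is essentially the paper's own argument: the paper's proof reads "This follows from the definition of $\phi_m$ and a direct computation," which is exactly the term-by-term differentiation of Theorem~\ref{witten4} and reindexing $n\mapsto n-1$ that you carry out. Your bookkeeping (the factor $n$ from symmetry cancelling $1/n!$ to $1/(n-1)!$, the constraint $k=m+\sum v_i-nr$ turning the exponent and the factorial ratio into the stated forms, and the lower bound $\sum v_i\geq nr-m$ coming from $k\geq 0$) all checks out.
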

\begin{proof}
This follows from the definition of $\phi_m$ and a direct
computation.
\end{proof}

In \cite{DVV}, Dijkgraaf, Verlinde and Verlinde give  a closed
formula of $\phi_m$ in terms of the determinant of matrices.
Presumably their formula is equivalent to ours, although we have not checked the details.

In view of Lemma \ref{witten1}, it would also be interesting to have
a series expansion for $W^{\frac{m+1}{r}}$. Here we write out terms
up to degree $3$. Let $\theta(x)$ be the Heaviside function that is $1$ for
$x\geq0$ and $0$ for $x<0$.
\begin{multline}\label{eqpdo2}
W^{\frac{m+1}{r}}=p^{m+1}-\frac{m+1}{r} \sum_u t_u p^{m+1+u-r}
\\ +\frac{m+1}{2!\cdot r^2}\sum_{u,v}((m+1-r)+(u+v-r+1)\theta(u+v-r))
t_ut_vp^{m+u+v+1-2r}\\-\frac{(m+1)}{3!\cdot
r^3}\sum_{u,v,w}((m+1-r)(m+1-2r)+(m+1-r)(u+v+1-r)\theta(u+v-r)\\
+(m+1-r)(u+w+1-r)\theta(u+w-r)+(m+1-r)(v+w+1-r)\theta(v+w-r)\\
+(u+v+w-2r+1)(u+v+w-2r+2)\theta(u+v+w-2r))t_u t_v t_w
p^{m+u+v+w+1-3r}\\+\cdots
\end{multline}

By Lemma \ref{witten1}, we can use the degree $3$ term of the above
expansion to get a formula for $4$-point correlation functions.

\begin{corollary}\label{witten6}
\begin{multline*}
\langle \tau_m \tau_u \tau_v \tau_w \rangle
=\frac{1}{r}(r-m-1-(u+v-r+1)\theta(u+v-r)\\
-(u+w-r+1)\theta(u+w-r)-(v+w-r+1)\theta(v+w-r)).
\end{multline*}
\end{corollary}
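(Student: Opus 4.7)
The plan is to derive the corollary as a direct consequence of Lemma~\ref{witten1} together with the cubic part of the expansion \eqref{eqpdo2}. Lemma~\ref{witten1} gives
$$\frac{\partial F}{\partial t_m}=\frac{r^2}{(m+1)(r+m+1)}\res(W^{1+(m+1)/r}),$$
so that
$$\langle\tau_m\tau_u\tau_v\tau_w\rangle \;=\; \frac{r^2}{(m+1)(r+m+1)}\,\frac{\partial^3}{\partial t_u\partial t_v\partial t_w}\res\bigl(W^{(m+r+1)/r}\bigr)\bigg|_{t=0}.$$

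First I would use \eqref{eqpdo2} with the parameter $m$ replaced by $m+r$, since this sends the exponent $(m+1)/r$ to $(m+r+1)/r=1+(m+1)/r$. Reading off the cubic part, the coefficient of $t_at_bt_c\,p^{(m+r)+a+b+c+1-3r}$ is $-\frac{m+r+1}{3!\,r^3}$ times the bracket
$$(m+1)(m+1-r)+(m+1)\!\!\sum_{\{x,y\}\subset\{a,b,c\}}\!\!(x+y+1-r)\theta(x+y-r)+(a+b+c-2r+1)(a+b+c-2r+2)\theta(a+b+c-2r).$$
The cubic contributes to the residue exactly when $(m+r)+a+b+c+1-3r=-1$, i.e.\ $a+b+c=2r-2-m$, which is precisely the dimension constraint \eqref{eqdim} for $\langle\tau_m\tau_u\tau_v\tau_w\rangle_0$.

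Next I would observe that on this constraint locus, $a+b+c-2r=-m-2<0$, so the last Heaviside term drops out. Differentiating $\sum_{a,b,c}C(a,b,c)\,t_at_bt_c$ in $t_u,t_v,t_w$ and evaluating at $t=0$ produces the factor $3!\,C(u,v,w)$ by symmetry of $C$ (this holds uniformly whether or not $u,v,w$ coincide, because of the standard $t$-power/factorial matching in the definition of $F$). The $3!$ cancels the $1/3!$ in \eqref{eqpdo2}, the factor $(m+r+1)$ cancels with $r+m+1$ in the prefactor, and the factor $(m+1)$ cancels as well, leaving
$$\langle\tau_m\tau_u\tau_v\tau_w\rangle=-\frac{1}{r}\Bigl[(m+1-r)+\sum_{\{x,y\}}(x+y+1-r)\theta(x+y-r)\Bigr],$$
which is the stated formula after flipping the sign of the first summand.

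There is no serious obstacle here; the proof is bookkeeping. The only points requiring care are (i) correctly performing the substitution $m\mapsto m+r$ in \eqref{eqpdo2}, (ii) verifying that the $(a+b+c-2r)$-Heaviside term indeed vanishes under the dimension constraint so that the quadratic $m$-terms disappear and the remaining expression factors through $(m+1)$, and (iii) checking that the symmetrization factor $3!$ is independent of coincidences among $u,v,w$. Once these are handled the cancellations with the prefactor $r^2/((m+1)(r+m+1))$ produce the claimed closed form.
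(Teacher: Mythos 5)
Your proposal is correct and follows exactly the paper's own (very terse) proof: substitute $m\mapsto m+r$ in the expansion \eqref{eqpdo2}, extract the coefficient of $p^{-1}$, and invoke Lemma~\ref{witten1}; you have simply written out the bookkeeping (the vanishing of the $\theta(u+v+w-2r)$ term on the dimension locus and the cancellation of $(m+1)(r+m+1)$ against the prefactor) that the paper leaves implicit.
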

\begin{proof}
Replace $m$ by $m+r$ in the expansion of $W^{\frac{m+1}{r}}$ and
take the coefficient of $p^{-1}$.  We get the desired result from
Lemma \ref{witten1}.
\end{proof}

The formula in Corollary \ref{witten6} is slightly different with
Witten's formula \cite[(3.3.36)]{Wi2}
\begin{multline*}
\langle \tau_m \tau_u \tau_v \tau_w \rangle
=\frac{1}{r}(m-(m+u-r+1)\theta(m+u-r)\\
-(m+v-r+1)\theta(m+v-r)-(m+w-r+1)\theta(m+w-r)),
\end{multline*}
but it is not difficult to prove that they are both equivalent to
$\langle\tau_{a_1}\tau_{a_2}\tau_{a_3}\tau_{a_4}\rangle=\frac{1}{r}\cdot
\min(a_i,r-1-a_i)$.

Our motivation in studying $r$-spin numbers on the small phase
space is to prove the following conjectural properties of these
numbers.

\begin{conjecture}
In the small phase space, we have\label{con}
\begin{enumerate}
\item[i)]{\rm(Integrality)} $\frac{r^{s-3}}{(s-3)!}\langle \tau_{m_1}\cdots \tau_{m_s}\rangle \in \mathbb
Z$.
\item[ii)] {\rm(Vanishing)} If $m_i<s-3$ for some $1\leq i\leq s$, then
$\langle \tau_{m_1}\cdots \tau_{m_s}\rangle=0$.
\item[iii)] {\rm(Multinomial distribution)} If $m_1 > m_2$, then $\langle \tau_{m_1-1} \tau_{m_2+1} \tau_{m_3}\cdots \tau_{m_s}\rangle \geq \langle \tau_{m_1} \tau_{m_2}\cdots
\tau_{m_s}\rangle$.
\end{enumerate}
\end{conjecture}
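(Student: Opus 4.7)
My overall strategy is to combine Lemma~\ref{witten1} with the series expansion~\eqref{eqpdo2} (or equivalently with Corollary~\ref{witten5}) to produce a closed-form expression for $\langle\tau_{m_1}\cdots\tau_{m_s}\rangle$ on the small phase space, and then to read each of the three claims from that expression. The primary correlator is, by Lemma~\ref{witten1}, the $(s-1)$-fold $t$-derivative of $\frac{r^2}{(m_1+1)(m_1+r+1)}\res W^{1+(m_1+1)/r}$ evaluated at $t=0$; substituting~\eqref{eqpdo2} (with $m\mapsto m_1+r$), the leading factor $(m_1+r+1)$ of the expansion cancels against the one in the prefactor from Lemma~\ref{witten1}, the ratio $r^2/r^{s-1}=r^{3-s}$ accounts for the $r^{s-3}$ in the denominator predicted by~(i), and the remaining combinatorial content organises into a sum indexed by ordered partitions of $(s-2)r-2-m_1$ into $s-1$ non-negative parts $v_2,\ldots,v_s$.

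For the integrality~(i), the remaining task is to show that the sum so obtained is an integer multiple of $(s-3)!$ (after the $r^{s-3}$ is stripped off). The natural route is induction on $s$ using Witten's formula~\eqref{eqwitten2}: the bases $s=3,4$ follow from the Kronecker delta $\langle\tau_{a_1}\tau_{a_2}\tau_{a_3}\rangle=\delta_{a_1+a_2+a_3,r-2}$ and from Corollary~\ref{witten6}, which gives $r\langle\tau_a\tau_b\tau_c\tau_d\rangle=\min_i\min(a_i,r-1-a_i)\in\mathbb{Z}$. In the inductive step each bracket on the right of~\eqref{eqwitten2} carries a denominator $r^{k-3}(k-3)!$, where $k$ is its size, and one must verify that summing over the splittings $\{4,\ldots,s\}=I\coprod J$ these denominators recombine to exactly $r^{s-3}(s-3)!$; this reduces the claim to a combinatorial identity among binomial coefficients which should be verifiable directly.

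For the vanishing~(ii), I would again induct on $s$ via~\eqref{eqwitten2}, with the $4$-point base immediate from Corollary~\ref{witten6}. Assuming $m_{i_0}<s-3$, a pigeonhole argument using the dimension constraint $\sum m_j=r(s-2)-2$, together with the specific auxiliary indices $m_1=x+z-(r-1)$, $m_2=r-1-z$, $m_3=y$, $m_4=z$, should show that in every non-trivially placed summand either the small label $m_{i_0}$ itself, or one of the newly introduced auxiliary indices, lies below the vanishing threshold for its own bracket, so the inductive hypothesis kills the term. The main technical hurdle is to guarantee that the auxiliary indices do not conspire to raise every bracket just above the threshold; one will likely need to split into cases based on whether $z$, $x$ or $y$ is close to $m_{i_0}$.

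The monotonicity~(iii) is the most delicate piece. Direct termwise rearrangement applied to the closed-form expression from the first paragraph is obstructed by the alternating signs in~\eqref{eqpdo2}, so a signed-combinatorial approach will probably not deliver positivity. A more promising plan is to find a combinatorial model realising $\frac{r^{s-3}}{(s-3)!}\langle\tau_{m_1}\cdots\tau_{m_s}\rangle$ as the cardinality of some set of multi-indexed combinatorial objects (as suggested both by~(i) and by the nickname ``multinomial distribution''), and then to exhibit an explicit injection corresponding to the move $(m_1,m_2)\mapsto(m_1-1,m_2+1)$. Identifying the correct combinatorial species is the principal obstacle, and seems to require structural input going beyond the pseudodifferential-operator calculus developed here; this is where I expect the bulk of the difficulty to lie.
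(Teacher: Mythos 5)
This statement is labeled a \emph{Conjecture} in the paper, and the paper does not prove it: the authors state only that they have verified it in low genus or for small $s$, that the weaker integrality $r^{s-3}\langle\tau_{m_1}\cdots\tau_{m_s}\rangle\in\mathbb{Z}$ (without the $(s-3)!$) can be obtained by extending \eqref{eqpdo2}, and that ``the combinatorial difficulty for the general case is still considerable.'' Your proposal is therefore not being measured against an existing proof, and it does not itself close the conjecture: it is a strategy outline in which each of the three parts ends at an unresolved step that you yourself flag (``should be verifiable directly,'' ``should show,'' ``the principal obstacle'').

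The gaps are genuine, not cosmetic. For (i), the termwise bookkeeping in the inductive step via \eqref{eqwitten2} does not work as described: a quadratic term is a product of brackets of sizes $k_1,k_2$ with $k_1+k_2=s+2$, so the inductive hypothesis only gives it denominator $r^{s-4}(k_1-3)!\,(k_2-3)!$ up to integers, and $\frac{r\,(k_1-3)!\,(k_2-3)!}{(s-3)!}$ is generically not an integer; one needs nontrivial recombination across the sum over splittings $I\coprod J$ (and the leading term of \eqref{eqwitten2} is a bracket of the \emph{same} size $s$, so the induction must instead run on the partial order Witten uses for effectiveness, which you do not set up). For (ii), the pigeonhole argument on the auxiliary indices $x+y+z-(r-1)$, $r-1-z$, $z$ is precisely the hard case analysis, and you give no reason it closes. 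For (iii), you explicitly state that the needed combinatorial model lies beyond the tools of the paper. In short, the statement remains open both in the paper and after your proposal; what you have written is a reasonable research plan consistent with the authors' own remarks, but not a proof.
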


We have verified this conjecture in low genus or when $s$ is small. One can even prove $r^{s-3}\langle \tau_{m_1}\cdots \tau_{m_s}\rangle \in \mathbb
Z$ by extending \eqref{eqpdo2}. However, the
combinatorial difficulty for the general case is still considerable,
even though we can write down explicit formulae for general
$s$-point correlation functions $\langle \tau_{m_1}\cdots
\tau_{m_s}\rangle$ via Theorem \ref{witten4}, Corollary
\ref{witten5} and Witten's Theorem \ref{witten3}.

See \cite{LX1} for more on denominators and multinomial-type properties of
intersection numbers.

Part of our motivation comes from Gromov-Witten invariants of
$\mathbb{CP}^n$ on the small phase space. For fixed $n\geq1$ and
$d\geq 0$, consider Gromov-Witten invariants of $\mathbb{CP}^n$ on the
small phase
space in genus zero
$$\langle m_1,\cdots,m_s\rangle:=\langle
\tau_{0,m_1},\cdots,\tau_{0,m_s}\rangle_{0,d}^{\mathbb{CP}^n},$$
which is nonzero (for dimensional reasons) only when
\begin{equation}\sum_{i=1}^s m_i=n+(n+1)d+s-3.\label{eq:dec6}\end{equation}

The following properties are analogues of corresponding statements in Conjecture~\ref{con}:
\begin{enumerate}
\item[i)]{\rm(Integrality)} $\langle m_1,\cdots ,m_s\rangle \in \mathbb
N_{\geq0}$.
\item[ii)] {\rm(Vanishing)} If $d>1$ and $s=d+2$, then $\langle m_1,\cdots ,m_s\rangle=0$.
\item[iii)] {\rm(Multinomial distribution)} If $m_1 > m_2$, then $$\langle m_1-1, m_2+1, m_3, \cdots, m_s\rangle \geq \langle m_1,
m_2,\cdots, m_s\rangle.$$
\end{enumerate}

The integrality (i) is clear, since genus zero Gromov-Witten
invariants of $\mathbb{CP}^n$ are intersections on a scheme, and hence
integral.  We conjecture the multinomial distribution (iii),
based on numerical evidence.    We now prove (ii).

\begin{proposition}    With the notation above, we have the vanishing
$\langle m_1,\cdots ,m_s\rangle=0$ of degree $d$ genus $0$ invariants
in $\mathbb{P}^n$, where $s=d+2$, with any $m_i$, and $d>1$.
\end{proposition}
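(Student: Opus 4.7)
My plan is to show the vanishing via a dimension count on the image of the evaluation map $ev\colon \overline{M}_{0,s}(\mathbb{P}^n,d)\to (\mathbb{P}^n)^s$. Because $\mathbb{P}^n$ is convex, the moduli space is smooth of the expected dimension $(n+1)d+n+s-3$, and the virtual fundamental class equals the fundamental class, so it suffices to prove that $ev_\ast[\overline{M}_{0,s}(\mathbb{P}^n,d)]=0$ in $H_\ast((\mathbb{P}^n)^s)$; the projection formula then forces
$$\langle m_1,\ldots,m_s\rangle = \int_{(\mathbb{P}^n)^s} ev_\ast[\overline{M}_{0,s}(\mathbb{P}^n,d)]\cdot\prod_i H_i^{m_i} = 0.$$

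The key geometric input I would use is the classical fact that any genus zero degree $d$ stable map to $\mathbb{P}^n$ has image contained in some linear $\mathbb{P}^d\subset\mathbb{P}^n$. For an irreducible rational curve of degree $d$ this is immediate, its linear span having dimension at most $d$. For a reducible stable map with components $C_1,\ldots,C_r$ of degrees $d_1,\ldots,d_r$ summing to $d$, I would induct on $r$ using the dual tree: attaching a new component of degree $d_i$ via a node enlarges the span by at most $d_i$, because two projective subspaces of dimensions $a$ and $b$ sharing a point span a subspace of dimension at most $a+b$. Walking along the tree yields a total span of dimension at most $\sum d_i = d$. Consequently $ev$ factors through the closed subvariety
$$Z := \bigcup_{L\in G(d+1,n+1)} L^s \subset (\mathbb{P}^n)^s,$$
realized as the image of the incidence variety $W=\{(L,y_1,\ldots,y_s): y_i\in L\}\subset G(d+1,n+1)\times(\mathbb{P}^n)^s$. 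Since $s=d+2\geq d+1$, a generic $s$-tuple lying on a $\mathbb{P}^d$ determines that $\mathbb{P}^d$ uniquely, so $W\to Z$ is birational onto its image and
$$\dim Z = \dim W = (d+1)(n-d) + sd = n(d+1)+d.$$

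To finish I would compare this with the moduli dimension $\dim \overline{M}_{0,s}(\mathbb{P}^n,d) = n(d+1) + 2d - 1$ for $s=d+2$, obtaining
$$\dim \overline{M}_{0,s}(\mathbb{P}^n,d) - \dim Z = d-1 > 0 \quad\text{whenever } d > 1.$$
Since $ev_\ast[\overline{M}_{0,s}(\mathbb{P}^n,d)]$ is a class of complex dimension $\dim\overline{M}$ supported on $Z$ of strictly smaller dimension, it must vanish in $H_\ast((\mathbb{P}^n)^s)$, completing the argument. The main technical point will be making the span bound in the reducible case fully rigorous, in particular handling contracted components and multiple covers, though this is a standard induction on the dual graph of a genus zero stable curve.
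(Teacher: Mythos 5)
Your proof is correct, and it rests on the same two pillars as the paper's: the span lemma (the image of a genus zero, degree $d$ stable map lies in a linear $\mathbb{P}^d$) and a dimension count that is arithmetically identical to the paper's. The paper argues enumeratively: it interprets $\langle m_1,\dots,m_s\rangle$ as a count of stable maps meeting general linear spaces of dimensions $c_i=n-m_i$, and shows that already on $\mathbb{G}(d,n)$ the incidence conditions have total codimension $\sum_i\max(0,n-d-c_i)\geq \sum_i(n-d-c_i) > (d+1)(n-d)$, so no $\mathbb{P}^d$ (hence no stable map) meets all of them. Your inequality $\dim\overline{M}_{0,s}(\mathbb{P}^n,d) > \dim\mathbb{G}(d,n)+sd=\dim Z$ is exactly the same inequality, since $\sum_i(n-d-c_i)=\sum_i m_i-sd=\dim\overline{M}_{0,s}(\mathbb{P}^n,d)-sd$; but you package it cohomologically, pushing the fundamental class forward to $(\mathbb{P}^n)^s$ and killing it on the too-small support $Z$. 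Your version buys freedom from any transversality or general-position statement (at the cost of invoking convexity to identify the virtual class with the fundamental class), and it cleanly absorbs the $\max(0,\cdot)$ truncation; you also do not need the birationality of $W\to Z$, only the trivial bound $\dim Z\leq\dim W$. One small point you should add: your construction of $Z$ presupposes $d\leq n$ (otherwise $G(d+1,n+1)$ is empty and the span lemma gives nothing). This is harmless --- if some $m_i>n$ the invariant vanishes trivially, and once all $m_i\leq n$ the constraint $\sum_i m_i=n+(n+1)d+s-3\leq ns$ forces $d\leq(n+1)/2<n$ --- but the paper makes this reduction explicitly and you should too.
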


\begin{proof}
We show that for any choice of $m_i$, the intersection theory problem
$\langle m_1,\cdots ,m_s\rangle$, interpreted as counting stable maps
``meeting'' generally chosen linear spaces of codimension $m_1$,
\dots, $m_s$,
 corresponds to the empty
intersection.
Let $c_i = n-m_i$ be the {\em dimension} of these linear spaces for
convenience;  $\sum c_i = n-2d+1$ from \eqref{eq:dec6}.

As $m_i \leq n$, we have
$$n (d+2) \geq \sum^s_{i=1} m_i = n + (n+1) d +
(d+2)-3 \quad \text{(using \eqref{eq:dec6}),}$$
from which $d \leq (n+1)/2 < n$.  The image of any degree $d$ stable map
lies inside a $\mathbb{P}^d$.  We show that there isn't even a $\mathbb{P}^d$
inside $\mathbb{P}^n$ meeting the (generally chosen)  linear spaces of
dimension $c_i$.
The codimension of the condition
(on
$\mathbb{G}(d,n)$)
that a $\mathbb{P}^d$ in $\mathbb{P}^n$ meet a $\mathbb{P}^{c_i}$ is $\max( 0,
n-d-c_i)$.
Then
\begin{eqnarray*}
\sum_{i=1}^s \max(0, n-d-c_i)
& \geq & \sum_{i=1}^s (n-d-c_i)  \\
& = & (n-d)(d+2) - (n-2d+1) \\
& >  & (d+1)(n-d)  \quad \text{(using $d>1$)} \\
& = & \dim \mathbb{G}(d,n)
\end{eqnarray*}
so there is no $\mathbb{P}^d$ in $\mathbb{P}^n$ meeting the desired linear
spaces,
and thus no degree $d$ stable map meeting these linear spaces.
\end{proof}

 \vskip 30pt

\appendix

\section{Combinatorial identities}

We prove Proposition \ref{comb}, using the Lagrange
inversion formula. The following form of Lagrange inversion formula
can be found in \cite[p. 38]{Sta}.
\begin{lemma}[Lagrange inversion formula]
\label{lag} Let $F(x)=a_1x+a_2x^2+\cdots\in \mathbb
C[[x]]$ be a power series with $a_1\ne0$ and $F^{-1}(x)\in \mathbb
C[[x]]$ be its inverse (defined by $F^{-1}(F(x))=x$). For
$k,n\in\mathbb Z$ we have
\begin{equation}\label{eqlag}
\frac{1}{n}[x^{n-k}]\left(\frac{x}{F(x)}\right)^n=\frac{1}{k}[x^n]F^{-1}(x)^k.
\end{equation}
\end{lemma}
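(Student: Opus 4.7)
My plan is to prove the Lagrange inversion formula via the calculus of formal residues. Writing $G = F^{-1}$, both sides of \eqref{eqlag} can be expressed as residues of formal Laurent series: the right-hand side equals $\frac{1}{k}\operatorname{Res}_{x=0}\bigl(x^{-n-1}G(x)^k\bigr)$, since $[x^n]G(x)^k$ is the $x^{-1}$-coefficient of $x^{-n-1}G(x)^k$. Similarly the left-hand side equals $\frac{1}{n}\operatorname{Res}_{y=0}\bigl(y^{k-1}F(y)^{-n}\bigr)$, because $y^{k-1}F(y)^{-n}=y^{k-1-n}\bigl(y/F(y)\bigr)^n$ and reading off the $y^{-1}$-coefficient is the same as reading off the $y^{n-k}$-coefficient of $\bigl(y/F(y)\bigr)^n$.

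To bridge these two residues, I would perform the formal change of variables $x = F(y)$ (equivalently $y = G(x)$), which is legitimate on the ring of formal Laurent series because $F(0)=0$ and $F'(0)=a_1 \ne 0$. Applying the standard identity
$$\operatorname{Res}_{x=0} h(x)\,dx = \operatorname{Res}_{y=0} h(F(y))\,F'(y)\,dy$$
to $h(x) = x^{-n-1}G(x)^k$, together with $G(F(y))=y$, converts the right-hand side of \eqref{eqlag} into $\frac{1}{k}\operatorname{Res}_{y=0}\bigl(y^k F'(y) F(y)^{-n-1}\bigr)$.

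The proof is then finished by formal integration by parts. Assuming $n\ne 0$, we have $\frac{F'(y)}{F(y)^{n+1}} = -\frac{1}{n}\frac{d}{dy}F(y)^{-n}$, and the identity $\operatorname{Res}(f'g) = -\operatorname{Res}(fg')$ holds for arbitrary formal Laurent series $f,g$, since derivatives of Laurent series have no $y^{-1}$-coefficient. Consequently the expression becomes
$$-\frac{1}{kn}\operatorname{Res}_{y=0}\!\Bigl(y^k\,\tfrac{d}{dy}F(y)^{-n}\Bigr) = \frac{1}{n}\operatorname{Res}_{y=0}\bigl(y^{k-1}F(y)^{-n}\bigr),$$
which matches the left-hand side. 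The only step requiring genuine care is the formal change-of-variables identity for residues; the standard way to justify it is to verify it on monomials $x^m$ for all $m\in\mathbb{Z}$ and extend by linearity and continuity in the $y$-adic topology on $\mathbb{C}((y))$. The edge cases $k=0$ or $n=0$ are either trivial or excluded from the statement (the factors $1/k,\,1/n$ signal $k,n\ne 0$).
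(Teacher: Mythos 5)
Your argument is correct, but it is worth noting that the paper does not actually prove Lemma~\ref{lag} at all: it is quoted as a known result with a citation to Stanley \cite[p.~38]{Sta}, and the appendix proceeds directly to use it in the proof of Proposition~\ref{comb}. What you have supplied is the standard residue-calculus proof of Lagrange inversion, and it is sound: the identification of both sides with formal residues is right, the change-of-variables identity $\res_{x=0} h(x)\,dx=\res_{y=0}h(F(y))F'(y)\,dy$ is the one genuinely nontrivial ingredient (correctly reduced to the monomial check, where the only interesting case is $h(x)=x^{-1}$, for which $F'(y)/F(y)$ has residue $1$), and the integration by parts step using $\res(f'g)=-\res(fg')$ for formal Laurent series is valid because derivatives of Laurent series have vanishing $y^{-1}$-coefficient. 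Your observation that the statement's ``$k,n\in\mathbb Z$'' must implicitly exclude $k=0$ and $n=0$ is also a fair reading of the lemma as stated. In short, your proposal adds a self-contained proof where the paper is content to cite the literature; the paper's choice keeps the appendix short, while yours makes the appendix independent of \cite{Sta} at the cost of having to justify the formal change-of-variables formula.
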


We now prove Proposition \ref{comb}.

 Let $f(x)=1+\sum_{j=2}^\infty a_j x^j\in \mathbb
C[[x]]$ be as in Proposition \ref{comb}. Then $F(x)=x/f(x)$ is a
power series with $a_1\neq 0$, so we can apply Lemma \ref{lag}.
Taking $k=1$ in equation \eqref{eqlag}, we see that
$[x^2]F^{-1}(x)=\frac{1}{2} [x]f(x)^2=0$, so we have
\begin{align}\label{eqcomb2}
\frac{1}{F^{-1}(x)}&=\frac{1}{x+c_3x^3+c_4x^4+\cdots}\\
&=\frac{1}{x}-c_3x-c_4x^2-\cdots.\nonumber
\end{align}
Taking $k=1$ and $k=2$ in equation \eqref{eqlag} respectively, we
get
\begin{align*}
\frac{[x^{n+1}]f(x)^{n}}{n}&=-[x^n]\frac{1}{F^{-1}},\\
\frac{[x^{n+2}]f(x)^{n}}{n}&=-\frac{1}{2}[x^n]\frac{1}{F^{-1}}.
\end{align*}
Substituting the above two identities into equation \eqref{eqcomb} and
then applying equation \eqref{eqcomb2} to the summation term on the right
hand side, equation \eqref{eqcomb} becomes
\begin{align*}
-[x^{n+1}]\frac{1}{F^{-1}}&=\frac{1}{2}\sum_{j=1}^{n-1}
[x^{j}]\frac{1}{F^{-1}}[x^{n-j}]\frac{1}{F^{-1}}-\frac{1}{2}[x^{n}]\frac{1}{(F^{-1}(x)^2}\\
&=\left(\frac{1}{2}[x^{n}]\frac{1}{F^{-1}(x)^2}-[x^{n+1}]\frac{1}{F^{-1}}\right)
-\frac{1}{2}[x^{n}]\frac{1}{(F^{-1}(x)^2}\\
&=-[x^{n+1}]\frac{1}{F^{-1}}.
\end{align*}
So we have proved Proposition \ref{comb}.\qed
\medskip

We now present equivalent formulations of Proposition \ref{comb}
that may be useful elsewhere. We use the notation introduced
in \eqref{eqeu7}.

\begin{proposition} \label{comb2}
Let $\bold {a,b}\in \mathbb{N}^\infty$, $\bold c\in \mathbb{N}^\infty$ and $||\bold
c||\geq2$. Then the following identity holds

$$\frac{\left(|\bold c|+||\bold c||-3\right)!}{(|\bold c|-1)!}\cdot (||\bold c||-1)=
\frac{1}{2}\sum_{\substack{\bold c=\bold a+\bold b\\\bold a,\bold
b\neq0}}\binom{\bold c}{\bold a,\bold b}\frac{(|\bold a|+||\bold
a||-2)!\cdot (|\bold b|+||\bold b||-2)!}{(|\bold a|-1)!\cdot (|\bold
b|-1)!},$$
where $\binom{\bold c}{\bold a,\bold b}$ is defined as $\prod_{i \geq
  1} \frac { {\bold c}!}
{ {\bold a}! {\bold b}!
} = \prod_{i\geq1}\binom{
c_i}{a_i,b_i}$ (cf.\ \eqref{eqeu7}).
\end{proposition}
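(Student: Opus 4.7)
The plan is to prove Proposition~\ref{comb2} by recognizing it as the coefficient-by-coefficient translation of the already-proved Proposition~\ref{comb}, applied to the universal series $f(x)=1+\sum_{j\geq 2} a_j x^j$ with the $a_j$ treated as independent indeterminates. Multi-indices $\bold c\in\mathbb{N}^\infty$ will parametrize monomials $\prod_{i\geq 1} a_{i+1}^{c_i}$, and the claim emerges from equating the coefficient of such a monomial on both sides of Proposition~\ref{comb} and then clearing denominators by multiplying through by $\bold c!$.

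First, by the multinomial theorem, with the reindexing $c_i=$ exponent of $a_{i+1}$ (so that the exponent of $x$ equals $|\bold c|+||\bold c||$), one has
$$[x^m]f^k=\sum_{|\bold c|+||\bold c||=m}\frac{k!}{(k-||\bold c||)!\,\bold c!}\prod_{i\geq 1}a_{i+1}^{c_i}.$$
Fix $\bold c$, set $N=|\bold c|$, $S=||\bold c||$, and put $n=N+S-2$, so $n+2=|\bold c|+||\bold c||$. Direct substitution gives
$$\left[\prod a_{i+1}^{c_i}\right]\frac{[x^{n+2}]f^{n+1}}{n+1}=\frac{(N+S-2)!}{(N-1)!\,\bold c!},\qquad \left[\prod a_{i+1}^{c_i}\right]\frac{[x^{n+2}]f^n}{n}=\frac{(N+S-3)!}{(N-2)!\,\bold c!}.$$
Subtracting and factoring out $\frac{(N+S-3)!}{(N-1)!\,\bold c!}$ reduces the bracket to $(N+S-2)-(N-1)=S-1$, producing
$$\frac{(N+S-3)!\,(S-1)}{(N-1)!\,\bold c!},$$
which upon multiplication by $\bold c!$ is exactly the left-hand side of Proposition~\ref{comb2}.

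Next I would extract the same coefficient from the convolution $\frac12\sum_{j=1}^{n-1}\frac{[x^{j+1}]f^j}{j}\cdot\frac{[x^{n-j+1}]f^{n-j}}{n-j}$. Grouping according to the splitting $\bold c=\bold a+\bold b$, with $\bold a$ the contribution from the first factor and $\bold b$ from the second, converts the sum over $j$ into a sum over such splittings with $\bold a,\bold b\neq 0$ (the excluded indices $j=0$ and $j=n$ correspond exactly to the degenerate splittings $\bold a=0$ and $\bold b=0$). The resulting coefficient reads
$$\frac12\sum_{\substack{\bold c=\bold a+\bold b\\\bold a,\bold b\neq 0}}\frac{(|\bold a|+||\bold a||-2)!}{(|\bold a|-1)!\,\bold a!}\cdot\frac{(|\bold b|+||\bold b||-2)!}{(|\bold b|-1)!\,\bold b!},$$
and multiplying by $\bold c!$ and using $\bold c!/(\bold a!\,\bold b!)=\binom{\bold c}{\bold a,\bold b}$ rewrites this as the right-hand side of Proposition~\ref{comb2}. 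Proposition~\ref{comb} then equates the two.

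The main obstacle is purely bookkeeping: verifying the one-line factorial manipulation that produces the clean $(S-1)$, tracking the two levels of summation (the outer one over $\bold c$ and the inner one over its splittings), and confirming that the hypothesis $||\bold c||\geq 2$ exactly matches the non-triviality of the splitting sum, since for $||\bold c||=1$ there is no decomposition of $\bold c$ into two nonzero parts and both sides vanish trivially. Beyond this, no new analytic input is required, as Proposition~\ref{comb} has already been established via the Lagrange inversion formula.
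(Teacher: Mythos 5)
Your proposal is correct and is essentially the paper's own proof: the paper likewise extracts the coefficient of $\prod_{j\geq 2}a_j^{c_{j-1}}$ from both sides of Proposition~\ref{comb} and then moves the term coming from $[x^{n+2}]f^n/n$ to the left before clearing the $\bold c!$, which is exactly your subtraction step. The only cosmetic difference is that the paper performs the factorial simplification after writing down the raw coefficient identity rather than before, and (a minor imprecision on your side) the degenerate splittings $\bold a=0$ and $\bold b=0$ correspond to $j=-1$ and $j=n+1$ rather than $j=0$ and $j=n$, which does not affect the argument.
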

\begin{proof}
Take any $c=(c_1,c_2,\dots)\in \mathbb{N}^\infty$, compare the coefficient
$\prod_{j\geq2} a_j^{c_{j-1}}$ in both sides of equation
\eqref{eqcomb}. We have
$$\frac{\left(|\bold c|+||\bold c||-2\right)!}{(|\bold c|-1)!\bold c!}=
\frac{1}{2}\sum_{\substack{\bold c=\bold a+\bold b\\\bold a,\bold
b\neq0}}\frac{(|\bold a|+||\bold a||-2)!}{(|\bold a|-1)!\bold
a!}\frac{ (|\bold b|+||\bold b||-2)!}{(|\bold b|-1)!\bold
b!}+\frac{\left(|\bold c|+||\bold c||-3\right)!}{(|\bold c|-2)!\bold
c!}.$$ By moving the last term in the right hand side to the left, we get
the desired identity.
\end{proof}

A partition is a sequence of integers $\mu_1 \geq \mu_2 \geq \dots
\geq \mu_k > 0$. We write
$$
|\mu| = \mu_1+\cdots+\mu_k,\qquad\ell(\mu) = k.$$

Define $m_j(\mu)$ to be the number of $j$'s among $\mu_1, \dots,
\mu_k$,
$z_{\mu} = \prod_{j} m_j(\mu)!j^{m_j(\mu)}$, and $p_{\mu}=\prod_{j}p_j^{m_j(\mu)}$.

\begin{proposition}
$$\sum_{\ell(\mu)\geq2}\frac{(|\mu|+\ell(\mu)-3)!(\ell(\mu)-1)p_{\mu}}{(|\mu|-1)!z_{\mu}}
=\frac12\left(\sum_{\mu\neq
0}\frac{(|\mu|+\ell(\mu)-2)!p_{\mu}}{(|\mu|-1)!z_{\mu}}\right)^2$$
\end{proposition}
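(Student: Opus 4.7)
The plan is to derive this identity as a weighted generating-function consequence of Proposition~\ref{comb2}. Recall that partitions $\mu$ are in natural bijection with sequences $\bold c\in\mathbb{N}^\infty$ via $c_j=m_j(\mu)$; under this bijection $|\mu|=|\bold c|$, $\ell(\mu)=||\bold c||$, $\mu\neq 0$ iff $\bold c\neq 0$, and
$$z_\mu=\bold c!\cdot\prod_{j\geq 1}j^{c_j}.$$

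First I will multiply both sides of Proposition~\ref{comb2} by $\prod_{j\geq 1}q_j^{c_j}/c_j!$ (with $q_j$ formal variables) and sum over $\bold c$. The key combinatorial observation is the elementary identity
$$\binom{\bold c}{\bold a,\bold b}\prod_j\frac{q_j^{c_j}}{c_j!}=\prod_j\frac{q_j^{a_j}}{a_j!}\cdot\prod_j\frac{q_j^{b_j}}{b_j!}\quad\text{whenever }\bold c=\bold a+\bold b,$$
which factorizes each term on the right-hand side of Proposition~\ref{comb2}. After swapping the order of summation, the constraint $\bold c=\bold a+\bold b$ disappears and the right-hand side becomes $\tfrac12$ times the square of a single sum over $\bold a\neq 0$. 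The constraint $||\bold c||\geq 2$ on the left is automatic, since the factor $||\bold c||-1$ kills the $||\bold c||\leq 1$ contributions.

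Finally I substitute $q_j=p_j/j$. Under the bijection $c_j=m_j(\mu)$ one immediately checks $\prod_j q_j^{c_j}/c_j!=p_\mu/z_\mu$, so the left-hand side becomes the sum on the left of the stated proposition, and the right-hand side becomes $\tfrac12$ times the square of $\sum_{\mu\neq 0}(|\mu|+\ell(\mu)-2)!\,p_\mu/\bigl((|\mu|-1)!\,z_\mu\bigr)$, as required.

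The only real obstacle is bookkeeping: verifying the multinomial identity above and the $(\bold c,q_j)\leftrightarrow(\mu,p_j)$ dictionary (including the factor $\prod_j j^{c_j}$ that converts $\bold c!$ into $z_\mu$). There is no new substantive content beyond Proposition~\ref{comb2}; everything takes place at the level of formal power series in the $q_j$ (equivalently the $p_j$), so no convergence issues arise.
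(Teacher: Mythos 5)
Your proof is correct and is essentially the paper's own argument: the paper deduces this proposition as a direct reformulation of Proposition~\ref{comb2} under the dictionary $c_j=m_j(\mu)$, which is exactly the weighted summation over $\bold c$ (with weight $p_\mu/z_\mu=\prod_j (p_j/j)^{c_j}/c_j!$) that you carry out explicitly. Your multinomial factorization and the observation that the conditions $\bold a,\bold b\neq 0$ decouple the double sum into a square are both correct.
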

\begin{proof} Take $\bold c=(m_1(\mu),m_2(\mu),\dots)\in \mathbb{N}^{\infty}$. Then
the identity in the proposition is just a reformulation of Proposition \ref{comb2}.
\end{proof}

\vskip 30pt
\section{The differential polynomial $W_r(z)$}

From \S \ref{sectionpdo},  $\gamma_{-1}^{r+1}$ can
be expressed as a differential polynomial in
$\gamma_{-1}^1,\dots,\gamma_{-1}^{r-1}$. If  $2\leq i\leq r$, denote
by $p_i(r)$ the coefficient of $D^i\gamma_{-1}^{r+1-i}$ in the
resulting differential polynomial $S(\gamma_{-1}^{r+1})$. From the
proof of Proposition \ref{pdo6}, it is straightforward to obtain the
following recursive formula for $p_i(r)$,

\begin{align}\label{eqdiff1}
p_i(r)=&\frac{1}{r+1-i}[D^i
w_{-(r+1-i)}](\gamma_{-1}^{r+1}-\frac{r+1}{r}\gamma_{-2}^r-\frac{r+1}{2r}D\gamma_{-1}^r)\nonumber\\
&-\frac{1}{r+1-i}\sum_{j=2}^{i-1}\binom{r+1-j}{i+1-j}p_j(r)\\
=&\binom{r+1}{i}\frac{1}{r}\frac{i-1}{2(i+1)}-\frac{1}{r+1-i}\sum_{j=2}^{i-1}\binom{r+1-j}{i+1-j}p_j(r).\nonumber
\end{align}
We have proved $p_2(r)=\frac{r+1}{12}$ in Proposition \ref{pdo6}.
The relation of $p_i(r)$ to the coefficients of $W_r(z)$ is given
by
\begin{equation}\label{eqdiff5}
[z_{r-i}^{(i)}]W_r(z)=\frac{-\sqrt{-1}^i(r-i+1)}{r^{\frac{i}{2}-1}(r+1)}p_i(r).
\end{equation}

For $i \geq 2$, define quantities $C_i$ by
\begin{equation}\label{eqdiff2}
p_i(r)=\frac{1}{r}\binom{r+1}{i}C_i.
\end{equation}
We will see shortly that $C_i$ are in fact constants independent of
$r$.

Substituting \eqref{eqdiff2} into the recursion formula
\eqref{eqdiff1}, we get
\begin{align}
\frac{1}{r}\binom{r+1}{i}C_i&=\binom{r+1}{i}\frac{1}{r}\frac{i-1}{2(i+1)}\nonumber
-\frac{1}{r+1-i}\sum_{j=2}^{i-1}\binom{r+1-j}{i+1-j}\frac{1}{r}\binom{r+1}{j}C_j\\
C_i&=\frac{i-1}{2(i+1)}-\sum_{j=2}^{i-1}\binom{i+1}{j}\frac{C_j}{i+1}.\label{eqdiff3}
\end{align}
Hence by induction (starting from $C_0=-\frac{1}{2}$, $C_1=0$), we see
that the $C_i$ are constants. Using
the values of $C_0$ and $C_1$, we may simplify \eqref{eqdiff3} as
\begin{equation}\label{eqdiff4}
\sum_{j=0}^i\binom{i+1}{j}C_j=\frac{i-2}{2},\qquad i\geq 1.
\end{equation}

\begin{proposition}
Let $i \geq2$. Then $C_i=B_{i}$ the Bernoulli numbers. In particular,
$C_{2k+1}=0$.
\end{proposition}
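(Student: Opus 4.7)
My plan is a short uniqueness argument comparing the sequence $(C_i)$ to the Bernoulli numbers. First I would observe that the recursion \eqref{eqdiff4} isolates the top term $\binom{i+1}{i} C_i = (i+1) C_i$ on the left-hand side, so each $C_i$ is uniquely determined by $C_0, \dots, C_{i-1}$. Consequently it suffices to exhibit one sequence that satisfies \eqref{eqdiff4} and agrees with $(C_i)$ at the base cases $i = 0, 1$.

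The candidate is obtained by gluing the given base values $C_0 = -\tfrac12$, $C_1 = 0$ to the Bernoulli tail $B_2, B_3, B_4, \dots$. To test this candidate I would invoke the classical Bernoulli recurrence
$$\sum_{k=0}^{n} \binom{n+1}{k} B_k = 0 \qquad (n \geq 1),$$
which is immediate from $\tfrac{x}{e^x - 1}(e^x - 1) = x$. Substituting the modified values $\tilde C_0 = -\tfrac12$ and $\tilde C_1 = 0$ for $B_0 = 1$ and $B_1 = -\tfrac12$ introduces a correction supported only at $j = 0$ and $j = 1$, equal to
$$(\tilde C_0 - B_0) + (i+1)(\tilde C_1 - B_1) = -\tfrac32 + \tfrac{i+1}{2} = \tfrac{i-2}{2},$$
which is precisely \eqref{eqdiff4}. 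By the uniqueness observation, $C_i = B_i$ for every $i \geq 2$, and the vanishing $C_{2k+1} = 0$ for $k \geq 1$ is then the classical vanishing of the odd Bernoulli numbers.

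I do not expect a serious obstacle here. The one small point that deserves care is the verification that the base values are indeed $C_0 = -\tfrac12$ and $C_1 = 0$ in the convention of \eqref{eqdiff2}, since \eqref{eqdiff3} was only derived for $i \geq 2$; this is a direct inspection of the $i = 0, 1$ cases of the definition of $p_i(r)$ and creates no real difficulty. After that, the proof is essentially bookkeeping against the Bernoulli recurrence.
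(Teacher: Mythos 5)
Your argument is correct and is essentially the paper's own proof: both reduce \eqref{eqdiff4} to the classical Bernoulli recurrence $\sum_{k=0}^{n}\binom{n+1}{k}B_k=0$ by adjusting the two base values ($C_0=-\tfrac12,\ C_1=0$ versus $B_0=1,\ B_1=-\tfrac12$) and noting that the correction term is exactly $\tfrac{i-2}{2}$, with the nonvanishing top coefficient $\binom{i+1}{i}=i+1$ giving uniqueness. The only cosmetic difference is the direction of the substitution (the paper modifies $C$ into a sequence $C'$ satisfying the Bernoulli recursion, while you perturb the Bernoulli sequence to satisfy \eqref{eqdiff4}), which changes nothing of substance.
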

\begin{proof}
We define a new sequence $C_j'$ by $C_0'=1, C_1'=-\frac12$ and
$C_j'=C_j,\,j\geq2$. From \eqref{eqdiff4}, we have
$$\sum_{j=0}^i\binom{i+1}{j}C_j'=0,\qquad i\geq 1,$$
which is the usual recursion for Bernoulli numbers.
Since $C_0'=B_0,C_1'=B_1$, we must have $C_j=C_j'=B_j$ for all
$j\geq2$.
\end{proof}

From \eqref{eqdiff5} and \eqref{eqdiff2}, we thus proved Proposition
\ref{pdo5}.

\vskip 30pt
\section{An identity of Bernoulli numbers}
Let $f_n(k),\,n\geq 1$ be given by the recursion
\begin{equation}\label{eq18}
f_{n+1}(k)=-(k+1)f_n(k+2)+(2k+1)f_n(k+1)-k f_n(k)
\end{equation}
starting with $f_1(k)=\frac{-1}{(k+1)(k+2)}$.

\begin{proposition}\label{bern} Let $n\geq 2$. We have
$
f_n(0)=-B_{n}/n$.
\end{proposition}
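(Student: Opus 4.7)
The plan is to realize $f_n(k)$ as moments $\int_0^1 x^k g_n(x)\,dx$ of an explicit sequence of polynomials, package $\{f_n(0)\}$ into an exponential generating function $A(t)$, and evaluate $A(t)$ in closed form so that the classical Bernoulli generating function appears.

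Starting from $f_1(k) = -\int_0^1 x^k(1-x)\,dx$, I would show by induction that $f_n(k) = \int_0^1 x^k g_n(x)\,dx$ with $g_1(x) = -(1-x)$ and $g_{n+1}(x) = (1-x)\frac{d}{dx}\bigl[x(1-x)g_n(x)\bigr]$. The inductive step uses the factorization
\[
-(k+1)x^{k+2}+(2k+1)x^{k+1}-kx^k = -x(1-x)\frac{d}{dx}[x^k(1-x)]
\]
and a single integration by parts, whose boundary terms vanish because $x^k(1-x)$ vanishes at both endpoints.

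Packaging $A(t) = \sum_{n\geq 1} f_n(0)\,t^{n-1}/(n-1)! = \int_0^1 \Phi(x,t)\,dx$ with $\Phi(x,t) = \sum_{n\geq 1}g_n(x)t^{n-1}/(n-1)!$, the recursion on $g_n$ becomes the first-order PDE $\partial_t\Phi = (1-x)(1-2x)\Phi + x(1-x)^2\partial_x\Phi$, $\Phi(x,0)=-(1-x)$. Setting $H = x(1-x)\Phi$ reduces it to $\partial_t H = x(1-x)^2\partial_x H$, so $H$ is constant along the characteristics $dx/dt = -x(1-x)^2$; in the coordinate $u = x/(1-x)$ these characteristics take the simple form $ue^u \cdot e^t = \mathrm{const}$. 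A direct change-of-variables computation (using $H = -u_0(1+u_0)^{-3}$, where $u_0$ is the value at $t=0$ on the characteristic through the current point) transforms $A(t)$ into
\[
A(t) + 1 = \int_0^\infty \frac{v\,du}{(1+u)^2(1+v)},
\]
with $v=v(u,t)$ defined implicitly by $ue^u = e^t v e^v$.

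The decisive move is to parametrize the curve $ue^u = e^t v e^v$ by $\alpha = u-v \in (0,t)$. Solving the implicit equation gives $v = \alpha/(e^{t-\alpha}-1)$, $u = \alpha e^{t-\alpha}/(e^{t-\alpha}-1)$ and $1+u = [(1+\alpha)e^{t-\alpha}-1]/(e^{t-\alpha}-1)$, and after cancellation the integrand reduces to $\alpha e^{t-\alpha}\,d\alpha/[(1+\alpha)e^{t-\alpha}-1]^2$. The telescoping substitution $\xi = (1+\alpha)e^{t-\alpha}-1$ has $d\xi = -\alpha e^{t-\alpha}\,d\alpha$, so
\[
A(t) + 1 = -\int_{e^t-1}^{t} \frac{d\xi}{\xi^2} = \frac{1}{t}-\frac{1}{e^t-1}.
\]
Comparing with the Bernoulli generating function $t/(e^t-1) = \sum_{n\geq 0}B_n t^n/n!$ gives $A(t) = -\frac{1}{2} - \sum_{n\geq 2}(B_n/n!)\,t^{n-1}$; reading off the coefficient of $t^{n-1}/(n-1)!$ then yields $f_n(0) = -B_n/n$ for all $n\geq 2$. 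The main obstacle is spotting the telescoping substitution $\xi = (1+\alpha)e^{t-\alpha}-1$, but once the $\alpha$-parametrization has produced an integrand whose numerator is $\alpha e^{t-\alpha}$ and whose denominator is a square, this choice is essentially forced.
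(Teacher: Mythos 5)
Your proof is correct, and it takes a genuinely different route from the paper's. The paper works with the ordinary generating function $F_n(x)=\sum_{k\ge 0} f_n(k)x^k$ (which, as it notes, only satisfies the induced recursion up to finitely many negative powers of $x$), splits it as $G_n(x)+R_n(x)\ln(1-x)$, solves the recursion for $R_n$ explicitly as a double sum $a(n,i)$, and then reduces the constant term of $-nF_n(x)$ to $n!\,[t^n]\tfrac{te^t-e^t+1}{e^t-1}$ through the chain of binomial identities \eqref{eq23}--\eqref{eq25}. You instead realize $f_n(k)$ as the moments $\int_0^1 x^k g_n(x)\,dx$ --- the key observation being the factorization $-(k+1)x^{k+2}+(2k+1)x^{k+1}-kx^k=-x(1-x)\tfrac{d}{dx}[x^k(1-x)]$, which after one integration by parts (with vanishing boundary terms) turns the three-term recursion \eqref{eq18} into the first-order operator $g\mapsto(1-x)\tfrac{d}{dx}[x(1-x)g]$ --- and then solve the resulting transport equation for the exponential generating function by characteristics. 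I verified the computations: the reduction $H=x(1-x)\Phi$, the conserved quantity $ue^{u}e^{t}$ along characteristics, the change of variables that produces $A(t)=-\int_0^\infty \frac{du}{(1+u)^2(1+v)}$ with $ue^u=e^tve^v$, the $\alpha=u-v$ parametrization, and the substitution $\xi=(1+\alpha)e^{t-\alpha}-1$ all check out, giving $A(t)=\tfrac1t-\tfrac1{e^t-1}-1$, which correctly returns $f_1(0)=-\tfrac12$ and $f_n(0)=-B_n/n$ for $n\ge 2$. What your approach buys is a clean closed form for the full generating function with essentially no combinatorial identities, and it sidesteps the negative-powers-of-$x$ issue entirely; the cost is some routine but unstated analytic bookkeeping (that the characteristic solution is analytic in $t$ near $0$ uniformly on $x\in[0,1]$, hence agrees with the formal series $\sum g_n(x)t^{n-1}/(n-1)!$, and that termwise integration over $[0,1]$ is legitimate), whereas the paper's longer argument stays entirely within formal power series.
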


The rest of the appendix is devoted to proving the above proposition. First we record the following combinatorial identities.
\begin{gather}
\sum_{k=0}^n\binom{n}{k}(-1)^{n-k}\frac{1}{n+i-k}=\frac{n!(i-1)!}{(n+i)!},\label{eq23}\\
\sum_{i=w}^n\frac{(i-1)!}{(i-w)!}=\frac{n!}{(n-w)!w},\label{eq24}\\
\left(\frac{e^t-1}{t}\right)^w=\sum_{j=w}^\infty \frac{t^{j-w}}{j!}\sum_{s=0}^w (-1)^s\binom{w}{s}(w-s)^j.\label{eq25}
\end{gather}

Consider the generating function $F_n(x)=\sum_{k=0}^\infty f_n(k) x^k$, then we have
 $F_1(x)=\frac{-1}{x}+\frac{x-1}{x^2}\ln(1-x)$ and \eqref{eq18}
implies
\begin{align*}
F_n(x)&=-\frac{\partial}{\partial x}\left(\frac{F_{n-1}(x)}{x}\right)+2\frac{\partial}{\partial x}F_{n-1}(x)-\frac{1}{x}F_{n-1}(x)-x\frac{\partial}{\partial x}F_{n-1}(x)\\
&=\frac{-(x-1)^2}{x}F_{n-1}(x)+\frac{1-x}{x^2}F_{n-1}(x).
\end{align*}
More precisely, $F_n(x)$ from the above recursion differs from the true generating function by a finite sum of negative powers of $x$. Thus Proposition \ref{bern}
is equivalent to prove that the constant term of $-nF_n(x)$ equals $B_{n}$.

It is not difficult to see that $F_n(x)$ decomposes as
\begin{equation}
F_n(x)=G_n(x)+R_n(x)\ln(1-x),\quad n\geq1,
\end{equation}
where $G_n(x)$ and $R_n(x)$ are rational functions in $x$ satisfying the recursions
\begin{gather}
G_n(x)=\frac{-(x-1)^2}{x}\frac{\partial}{\partial x}G_{n-1}(x)+\frac{1-x}{x^2}G_{n-1}(x)+\frac{1-x}{x}R_{n-1}(x),\label{eq19}\\
R_n(x)=\frac{-(x-1)^2}{x}\frac{\partial}{\partial x}R_{n-1}(x)+\frac{1-x}{x^2}R_{n-1}(x).\label{eq20}
\end{gather}

We may solve \eqref{eq20} to get
\begin{equation}\label{eq21}
R_n(x)=(x-1)^n\sum_{i=1}^n (-1)^{i+1} a(n,i) x^{-n-i},
\end{equation}
where $a(n,i)$ is given by
$$a(n,i)=\sum_{w=0}^i (-1)^{i-w}\binom{n+i}{i-w}\sum_{s=0}^{w}(-1)^s\frac{(w-s)^{n+w}}{s!(w-s)!}.$$

From \eqref{eq19}, we may prove that
$[x^k]G_n(x)=0,\,\forall k\geq0$. By using \eqref{eq23}, \eqref{eq24}, \eqref{eq25}, the constant term of $-nF_n(x)$ equals
\begin{multline}\label{eq22}
[x^0](-n)R_n(x)\ln(1-x)=(-n)\sum_{i=1}^n(-1)^i a(n,i)\sum_{k=0}^n\binom{n}{k}(-1)^{n-k}\frac{1}{n+i-k}\\
=(-n)\sum_{i=1}^n(-1)^i a(n,i)\frac{n!(i-1)!}{(n+i)!}\\
=(-n)\sum_{w=1}^n\sum_{s=0}^w (-1)^s\binom{w}{s}\frac{(w-s)^{w+n}}{w!}(-1)^w\frac{n!}{(w+n)!}\sum_{i=w}^n\frac{(i-1)!}{(i-w)!}\\
=(-n)\sum_{w=1}^n\sum_{s=0}^w (-1)^s\binom{w}{s}\frac{(w-s)^{w+n}}{w!}(-1)^w\frac{n!}{(w+n)!}\frac{n!}{(n-w)!w}\\
=(-n)\sum_{w=1}^n(-1)^w\binom{n}{w}\frac{n!}{w}[t^n]\left(\frac{e^t-1}{t}\right)^w\\
=-\sum_{w=1}^n(-1)^w\binom{n}{w}n![t^{n-1}]\left(\left(\frac{e^t-1}{t}\right)^{w-1}\frac{d}{dt}\left(\frac{e^t-1}{t}\right)\right)\\
=-n![t^{n-1}]\left(\sum_{w=0}^n(-1)^w\binom{n}{w}\left(\frac{e^t-1}{t}\right)^{w}\frac{te^t-e^t+1}{t(e^t-1)}\right)
+n![t^{n-1}]\frac{te^t-e^t+1}{t(e^t-1)}\\
=-n![t^{n-1}]\left(\left(1-\frac{e^t-1}{t}\right)^{n}\frac{te^t-e^t+1}{t(e^t-1)}\right)
+n![t^{n}]\frac{te^t-e^t+1}{e^t-1}\\
=n![t^{n}]\frac{te^t-e^t+1}{e^t-1},
\end{multline}
where the last equation follows by noting
\begin{align*}
1-\frac{e^t-1}{t}&=\frac{-1}{2}t-\frac{1}{6}t^2+\cdots,\\
\frac{te^t-e^t+1}{t(e^t-1)}&=\frac{1}{2}+\frac{1}{12}t+\cdots.
\end{align*}

Finally, Proposition \ref{bern} follows from
\begin{equation}
1+\frac{te^t-e^t+1}{e^t-1}=\frac{t}{1-e^{-t}}=1+\frac{t}{2} +\sum_{n=2}^\infty \frac{B_n t^n}{n!}.
\end{equation}

\begin{remark}
Another way of proving Proposition \ref{bern} is by studying the function $h_j(k)=\prod_{i=1}^{2j} (k+i) f_j(k)$. Then \eqref{eq18} becomes
\begin{multline}\label{eq17}
h_{j+1}(k)=-(k+1)^2(k+2)h_j(k+2)\\+(k+1)(2k+1)(k+2j+2)h_j(k+1)-k(k+2j+1)(k+2j+2) h_j(k)
\end{multline}
starting with $h_1(k)=-1$, $h_2(k)=4k-2$, $h_3(k)=-36k^2+84k$.

We may prove from the recursion \eqref{eq17} (although more difficult) that $h_j(k)$ is a degree $j-1$ polynomial whose leading term equals $(-1)^j (j!)^2 k^{j-1}$
and the constant term equals $-(2j)!B_j/j$ when $j\geq 2$, as claimed in Proposition \ref{bern}.
\end{remark}

$$ \ \ \ $$

\end{document}